\numberwithin{equation}{section}
\numberwithin{figure}{section}
\newcommand\R{\mathbb{R}}
\newcommand\C{\mathbb{C}}
\newcommand\Z{\mathbb{Z}}
\newcommand\T{\mathbb{T}}
\newcommand\al{\alpha}
\newcommand\be{\beta}
\newcommand\gam{\gamma}
\newcommand\del{\delta}
\newcommand\lam{\lambda}
\newcommand\Lam{\Lambda}
\newcommand\sig{\sigma}
\newcommand\Om{\Omega}
\newcommand\1{\mathds{1}}
\newcommand\eps{\varepsilon}
\renewcommand\le{\leqslant}
\renewcommand\leq{\leqslant}
\renewcommand\geq{\geqslant}
\newcommand\sbt{\subset}
\newcommand{\mes}{\operatorname{mes}}
\newcommand{\supp}{\operatorname{supp}}
\newcommand\zal{\alpha \mathbb{Z} }
\newcommand\zbe{\beta \mathbb{Z} }
\newcommand\nz{n \mathbb{Z} }
\newcommand\mz{m \mathbb{Z} }
\newcommand\tal{\mathbb{T}_{\alpha}}
\newcommand\tbe{\mathbb{T}_{\beta}}
\newcommand\pal{\pi_{\alpha}}
\newcommand\pbe{\pi_{\beta}}
\newcommand\euler{\chi}
\newcommand{\interior}{\operatorname{int}}
\theoremstyle{plain}
\newtheorem{thm}{Theorem}[section]
\newtheorem{lem}[thm]{Lemma}
\newtheorem{cor}[thm]{Corollary}
\newtheorem{prop}[thm]{Proposition}
\newtheorem*{claim*}{Claim}
\newcommand{\thmref}[1]{Theorem~\ref{#1}}
\newcommand{\secref}[1]{Section~\ref{#1}}
\newcommand{\lemref}[1]{Lemma~\ref{#1}}
\newcommand{\defref}[1]{Definition~\ref{#1}}
\newcommand{\propref}[1]{Proposition~\ref{#1}}
\newcommand{\remref}[1]{Remark~\ref{#1}}
\theoremstyle{definition}
\newtheorem{definition}[thm]{Definition}
\newtheorem*{definition*}{Definition}
\newtheorem*{remarks*}{Remarks}
\newtheorem*{remark*}{Remark}
\newtheorem{remark}[thm]{Remark}
\newenvironment{enumerate-roman}
{\begin{enumerate}
\addtolength{\itemsep}{5pt}
}
{\end{enumerate}}
\newenvironment{enumerate-alph}
{\begin{enumerate}
\addtolength{\itemsep}{5pt}
}
{\end{enumerate}}
\newenvironment{enumerate-num}
{\begin{enumerate}
\addtolength{\itemsep}{5pt}
}
{\end{enumerate}}
\newenvironment{enumerate-text}
{\begin{enumerate}
\addtolength{\itemsep}{5pt}
}
{\end{enumerate}}
\begin{document}

\title{Functions tiling simultaneously with two arithmetic progressions}

\author{Mark Mordechai Etkind}
\address{Department of Mathematics, Bar-Ilan University, Ramat-Gan 5290002, Israel}
\email{mark.etkind@mail.huji.ac.il}

\author{Nir Lev}
\address{Department of Mathematics, Bar-Ilan University, Ramat-Gan 5290002, Israel}
\email{levnir@math.biu.ac.il}

\date{September 21, 2023}
\subjclass[2010]{05B45, 15B51}
\keywords{Tiling, translates, doubly stochastic arrays, measure preserving graphs}
\thanks{Research supported by ISF Grant No.\ 1044/21 and ERC Starting Grant No.\ 713927.}

\begin{abstract}
We consider measurable functions $f$ on $\mathbb{R}$ that tile simultaneously by two arithmetic progressions $\alpha \mathbb{Z}$ and $\beta \mathbb{Z}$ at respective tiling levels $p$ and $q$. We are interested in two main questions: what are the possible values of the tiling levels $p,q$, and what is the least possible measure of the support of $f$? We obtain sharp results which show that the answers depend on arithmetic properties of $\alpha, \beta$ and $p,q$, and in particular, on whether the numbers $\alpha, \beta$ are rationally independent or not.
\end{abstract}

\maketitle


\section{Introduction}

\subsection{} 
Let $f$ be a measurable function on $\R$, and 
$\Lam \sbt \R$ be a countable  set. 
We say that the function $f$
\emph{tiles $\R$
at level $w$} with the translation set $\Lam$,
or that 
\emph{$f+\Lam$ is a tiling of $\R$
at level $w$}
 (where $w$ is a constant), 
if we have
\begin{equation}
\label{eqI1.1}
\sum_{\lambda\in\Lambda}f(x-\lambda)=w\quad\text{a.e.}
\end{equation}
and the series in \eqref{eqI1.1} converges absolutely a.e.

In the same way one can define tiling 
by translates of a measurable function $f$ on 
$\R^d$, or  more generally, on any locally compact
 abelian group.

If $f = \1_\Omega$ 
is the indicator function of a set $\Omega$,
then $f + \Lam$ is a tiling at level one
if and only if 
the translated copies $\Omega+\lam$, $\lam\in\Lam$,
fill the whole space without overlaps up to measure zero. 
To the contrary, for tilings  by a
 general real or complex-valued function
$f$, the translated copies may have 
overlapping supports.

Tilings by translates of a function 
have been studied by various authors, see e.g.\ \cite{LM91}, 
\cite{KL96}, \cite{KW99}, \cite{Kol04}, \cite{KL16}, \cite{KW19},
\cite{Liu21}, \cite{KL21}, \cite{Lev22}, \cite{KP22}.

\subsection{}
By the \emph{support} of a function $f$ we shall mean the set
\begin{equation}
\label{eqI1.S}
\supp f := \{x : f(x) \neq 0\}.
\end{equation}

In \cite{KP22},
inspired by the Steinhaus tiling problem,
 the authors studied the following question: 
how ``small'' can be the 
support of a function $f$ which tiles 
$\R^d$ simultaneously by a finite number of lattices 
$\Lam_1, \dots, \Lam_N$?
In particular, they 
posed the question as to 
what is the least possible measure 
of the support of such a function $f$.

The problem is nontrivial even in 
dimension one and for two lattices only.
This case will be studied in the present paper.
We thus  consider  
a measurable function $f$ on $\R$
that simultaneously tiles 
by two arithmetic progressions
$\zal$ and $\zbe$, that is,
\begin{equation}
\label{eqA2.1.1}
\sum_{k \in \Z}f(x-k\al)=p,
\quad
\sum_{k \in \Z}f(x-k\be)=q \quad\text{a.e.}
\end{equation}
where $\al, \be$ are positive
real numbers, the tiling levels $p,q$ are complex numbers,
and both series in \eqref{eqA2.1.1} converge absolutely a.e.

It is obvious that if $p,q$ are both  nonzero, then
 the simultaneous tiling condition 
 \eqref{eqA2.1.1} implies that 
$\mes(\supp f)$
can be no smaller than $\max\{\al,\be\}$.
This estimate was improved 
for   \emph{nonnegative} functions 
$f$ in \cite[Theorem 2.6]{KP22},
where the authors proved  
that if $0 < \al < \be$ then
the tiling condition 
 \eqref{eqA2.1.1} implies that
$\mes(\supp f) \geq \lceil \be/\al \rceil \al$.
The authors asked in \cite[Question 4]{KP22} 
what is the least possible 
measure of the support of
 a function $f$ satisfying \eqref{eqA2.1.1}.
In this paper we obtain sharp results 
which improve on the lower bound from
\cite{KP22} and provide
a complete answer to this question.

\subsection{}
Notice that if $f$ is nonnegative, then 
  integrating the first equality in \eqref{eqA2.1.1}
over the interval $[0,\al)$ yields
$\int_{\R} f = p\al$, so $f$ must in fact be integrable.
The same holds if $f$ is complex valued
but assumed a priori to be in $L^1(\R)$.
Moreover, in this case we can also integrate
the second equality  in \eqref{eqA2.1.1}
over $[0,\beta)$ and get
 $\int_{\R}  f = q\be$, hence $p \al = q \be$. 
This proves the following basic fact:

\begin{prop}
\label{propA2.1}
Let $f$ be a measurable function on $\R$ assumed
to be either nonnegative or in $L^1(\R)$.
If $f$ satisfies \eqref{eqA2.1.1} then the vector
$(p,q)$ is proportional to $(\be, \al)$.
\end{prop}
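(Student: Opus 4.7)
The plan is to follow the outline already implicit in the paragraph preceding the proposition: reduce everything to integrating the two tiling identities over a fundamental domain of the respective lattice. The one point requiring care is the justification of term-by-term integration; once $f \in L^1(\R)$, this is straightforward, so the first step is to establish integrability (which is the content of the "nonnegative" case).

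First, assume $f \geq 0$. I would integrate the identity $\sum_{k\in\Z} f(x-k\al) = p$ over $x \in [0,\al)$. Since the summands are nonnegative, Tonelli's theorem allows interchanging sum and integral without any additional hypothesis, giving
\begin{equation*}
\sum_{k\in\Z} \int_0^\al f(x-k\al)\,dx = \int_0^\al p \,dx = p\al.
\end{equation*}
The left-hand side telescopes to $\int_\R f$, so $\int_\R f = p\al$ and in particular $f \in L^1(\R)$. (In the other case, $f \in L^1(\R)$ is assumed from the start, so nothing needs to be done here.)

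Next, with $f \in L^1(\R)$ known, I would repeat the same integration, this time of both tiling identities, over $[0,\al)$ and $[0,\be)$ respectively. The absolute a.e.\ convergence of the series in \eqref{eqA2.1.1}, combined with $f \in L^1$, lets me invoke Fubini to exchange sum and integral; by the same telescoping, both integrals equal $\int_\R f$. Thus
\begin{equation*}
p\al \;=\; \int_\R f \;=\; q\be,
\end{equation*}
which is exactly the statement that $(p,q)$ is proportional to $(\be,\al)$.

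I do not anticipate a real obstacle: the only delicate point is the exchange of summation and integration, and this is handled by Tonelli in the nonnegative case and by absolute convergence together with $f \in L^1$ in the complex case. Everything else is bookkeeping.
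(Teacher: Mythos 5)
Your proof is correct and follows the same approach as the paper, which integrates each tiling identity over a fundamental domain and telescopes to $\int_\R f$. You simply make explicit the Tonelli/Fubini justifications for interchanging sum and integral, which the paper leaves implicit.
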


The convolution $\1_{[0, \al)} \ast \1_{[0,\be)}$ provides a
basic  example
of a nonnegative function $f$ satisfying
 \eqref{eqA2.1.1} with $(p,q) = (\be, \al)$,
and such that $\supp f$ is an interval of length $\al + \be$.

We are interested in the following two main questions:

\begin{enumerate-roman}
\item 
Do there exist tilings \eqref{eqA2.1.1} such that the tiling
level vector $(p,q)$ 
\emph{is not} proportional to $(\be, \al)$? (In such a 
case $f$ can be neither  nonnegative nor integrable.)

\item 
What is the least possible value of $\mes(\supp f)$ 
for a function $f$ satisfying \eqref{eqA2.1.1}
with a given tiling level vector $(p,q)$?
\end{enumerate-roman}

In this paper we answer these questions in full generality.
The answers turn out to depend on arithmetic properties 
of $\alpha, \beta$ and $p,q$, and in particular,
on whether the numbers
$\al, \be$ are \emph{rationally  independent} or not. 
Moreover, we will see that  the results 
differ substantially between these two cases.


\section{Results}

\subsection{}
First we consider the case where
$\al, \be$ are rationally  independent.
In this case our first result establishes the existence of
tilings \eqref{eqA2.1.1} such that the levels $p,q$ are
 \emph{arbitrary complex numbers}, i.e.\ the vector
$(p,q)$ is not necessarily
proportional to $(\be, \al)$.
Moreover, we can construct
such tilings with $\mes(\supp f)$ never exceeding $\al + \be$.

\begin{thm}
\label{thmA3.1}
Let $\al, \be$ be rationally  independent.
For any two complex numbers $p,q$  
there is  a measurable function $f$
on $\R$ satisfying
\eqref{eqA2.1.1} with
$\mes(\supp f) \leq \al + \be$.
\end{thm}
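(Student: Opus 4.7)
The plan is to construct $f$ explicitly. WLOG assume $\al \le \be$, and set $\gam := \be - \al$; then $\gam/\al$ is irrational. When $(p,q) \propto (\be,\al)$ a suitable scalar multiple of the basic tile $\varphi_0 := \1_{[0,\al)} * \1_{[0,\be)}$ (which tiles at $(\be,\al)$ with $\supp \varphi_0 = [0,\al+\be]$ of measure $\al+\be$) suffices by the remark preceding the theorem. The interesting case is $(p,q) \not\propto (\be,\al)$, where $f$ is necessarily neither nonnegative nor integrable.

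A first natural attempt is to seek $f$ with $\supp f \sbt [0,\al+\be)$. Using the $\be$-tiling equation one eliminates $f$ on $[\al,\be)$ (where it must equal $q$) and on $[\be,\al+\be)$ (where $f(x) = q - f(x-\be)$), reducing the $\al$-tiling equation to a cohomological equation
\[
  h \circ R_\gam - h \;=\; \phi \qquad \text{on } \tal = \R/\al\Z,
\]
where $h := f|_{[0,\al)}$, $R_\gam$ is the (irrational) rotation by $\gam$, and $\phi$ is an explicit two-step piecewise-constant function whose mean computes to $\int_{\tal} \phi = p\al - q\be$. When this mean is nonzero, a standard Birkhoff-sums argument (using ergodicity and measure-preservation of $R_\gam$: if $h \circ R^n - h = S_n\phi \sim n \int \phi$ a.e., then $|h \circ R^n|$ must exceed any $M$ for a.e.\ $x$ and $n$ large, forcing $|h| = \infty$ on a set of full measure) rules out any measurable solution $h$. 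An analogous reduction and obstruction applies to any support of finite measure contained in a bounded set. Consequently, $\supp f$ must be unbounded (though of finite measure).

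The construction I would then try is a signed sum
\[
  f \;=\; \sum_{n \in \Z} c_n \1_{I_n},
  \qquad I_n := [n\be + s_n,\; n\be + s_n + \eps_n),
\]
with shifts $s_n \in [0,\al)$ and geometrically decaying lengths $\eps_n$ satisfying $\sum_n \eps_n \le \al+\be$. The $\be$-tiling equation becomes a two-term recurrence linking $c_n$ to $c_{n-1}$ (as adjacent $I_n$'s lie on a common $\be$-orbit), while after projection modulo $\al$ the $\al$-tiling equation becomes a constraint of the form $\sum_n c_n \1_{E_n}(y) = p$ for $y \in \tal$, where $E_n$ is the image of $I_n$. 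Rational independence of $\al,\be$ is used to guarantee that the combined incidence structure of $\al$- and $\be$-orbits on $\{I_n\}$ is acyclic (a forest), so the two systems of constraints are jointly consistent, and suitable $c_n$ realize any prescribed $(p,q)$. The main obstacle is to tune the decay of $c_n$ against that of $\eps_n$ so that both tiling sums converge absolutely, the measure bound $\sum \eps_n \le \al+\be$ holds, and the prescribed levels are attained; the equidistribution of $\{n\gam \bmod \al\}$ provided by irrationality is the decisive ingredient here.
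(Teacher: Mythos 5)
Your proposal correctly identifies the overall shape of the problem (the easy proportional case, the Anosov-type obstruction ruling out bounded support, the need for an unbounded support of finite measure, and the role of Kronecker/irrationality), but the construction you outline has a genuine gap precisely where you flag ``the main obstacle.'' The ansatz $f = \sum_n c_n \1_{I_n}$ with $I_n = [n\be + s_n,\, n\be + s_n + \eps_n)$ is too rigid: placing a single interval per $\be$-period ties the projections $\pal(I_n)$ and $\pbe(I_n)$ together through the one free parameter $s_n$, so you cannot prescribe them independently, which is exactly what the two tiling equations demand. The claimed ``two-term recurrence'' for the $\be$-tiling also does not materialize: $\pbe(I_n) = [s_n, s_n + \eps_n)$ and $\pbe(I_{n-1})$ need not overlap, and a generic $y \in \tbe$ is covered by an uncontrolled (possibly empty or infinite) set of indices, so what you really have is a pair of weighted covering constraints on $\tal$ and $\tbe$ in the unknowns $c_n$, and no argument is given that this system is solvable with absolutely convergent sums while keeping $\sum_n \eps_n \le \al + \be$. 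The appeal to an acyclic incidence structure is a heuristic, not a proof, and the phrase ``tune the decay of $c_n$ against that of $\eps_n$'' is exactly the content that is missing.

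The paper's construction resolves this by using more flexible building blocks that decouple the two projections. The key device (\lemref{lemFL1.1}) shows, via Kronecker's theorem, that one can find an \emph{elementary} set $U \sbt \R$ (a finite union of small closed intervals, not a single interval) with $\pal(U)$ equal to any prescribed elementary set $A \sbt \tal$, with $\pal$ injective on $\interior(U)$, \emph{and simultaneously} $\pbe(U)$ contained in any prescribed nonempty open interval $J \sbt \tbe$, no matter how small. With this decoupling, \lemref{lemFL1.2} runs an alternating induction: decompose $\tal = \bigcup_k A_k$ and $\tbe = \bigcup_k B_k$, build $g_k$ that fixes $\pal(f)=p$ on $A_k$ while dumping its unwanted $\pbe$-projection into the part of $\tbe$ not yet settled, then build $h_k$ that fixes $\pbe(f)=q$ on $B_k$ while dumping its unwanted $\pal$-projection into the part of $\tal$ not yet settled, and so on. The support has measure exactly $m(\tal) + m(\tbe) = \al + \be$, absolute convergence is automatic because the pieces are pairwise disjoint and the sums are locally finite, and the tiling levels come out exactly $p$ and $q$. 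If you want to save your approach, replace each $I_n$ by such a finite union of small intervals with independently prescribed $\pal$- and $\pbe$-projections; that is precisely \lemref{lemFL1.1}.
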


We will also prove that while the function $f$ 
in \thmref{thmA3.1} has support of finite measure, 
$f$ cannot  in general 
be supported on any \emph{bounded} subset of $\R$.

 \begin{thm}
\label{thmS5.1}
Let $f$ be a measurable function on $\R$ satisfying \eqref{eqA2.1.1}
  where $\al, \be$ are rationally  independent. 
If  the vector $(p,q)$ is not proportional to $(\be, \al)$, then $\supp f$
 must be an unbounded set.
 \end{thm}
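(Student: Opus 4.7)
The plan is to argue by contrapositive: assume $\supp f$ is bounded and deduce that $(p,q)$ is proportional to $(\be,\al)$. A standard reduction eliminates $p$. The convolution $h := \1_{[0,\al)} \ast \1_{[0,\be)}$ has bounded support and tiles $\al\Z$ at level $\be$ and $\be\Z$ at level $\al$, so $\tilde f := f - (p/\be)h$ has bounded support, tiles $\al\Z$ at level $0$, and tiles $\be\Z$ at level $q_0 := q - p\al/\be$. The non-proportionality hypothesis is precisely $q_0 \neq 0$, and this is what must be contradicted.

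The next step is to represent $\tilde f$ as an $\al$-coboundary with bounded support. Set $g(x) := \sum_{k \le 0} \tilde f(x + k\al)$; at every $x$ this is a finite sum because $\supp \tilde f$ is bounded, and a direct telescoping gives $\tilde f(x) = g(x) - g(x - \al)$. Using that the full $\al$-periodization of $\tilde f$ vanishes, $g$ is seen to be compactly supported: for very negative $x$ every summand falls outside $\supp \tilde f$, while for very positive $x$ the partial sum over $k \le 0$ already captures every nonzero term of the full periodization, which equals $0$. Substituting the coboundary representation into the $\be\Z$-tiling relation $\sum_l \tilde f(x - l\be) = q_0$ yields $G(x) - G(x - \al) = q_0$ for almost every $x$, where $G := \sum_l g(\cdot - l\be)$ is the $\be$-periodization of $g$. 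Being a finite sum of measurable functions at each $x$, $G$ descends to a measurable, almost-everywhere-finite function on $\tbe$.

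The main step, and the anticipated obstacle, is to extract a contradiction from $q_0 \neq 0$ using ergodic theory. Let $T \colon \tbe \to \tbe$ be the translation $Tx := x + \al \pmod{\be}$, which is ergodic (an irrational rotation) because $\al/\be \notin \Q$. Iterating the identity $G \circ T - G = q_0$ over a countable intersection of full-measure sets yields $G(T^n x) = G(x) + n q_0$ for every $n \in \Z$ and almost every $x \in \tbe$. For each $R > 0$ I apply Birkhoff's pointwise ergodic theorem to $\1_{\{|G|\le R\}} \in L^1(\tbe)$: the averages $\tfrac{1}{N}\#\{n \in [0,N) : |G(T^n x)| \le R\}$ converge almost everywhere to $\mes(\{|G|\le R\})/\be$. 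On the other hand, because $G(T^n x) = G(x) + n q_0$ with $q_0 \neq 0$, this count is bounded by $2R/|q_0| + 1$ uniformly in $N$, so the limit is $0$. Hence $\mes(\{|G|\le R\}) = 0$ for every $R > 0$, which upon letting $R \to \infty$ contradicts the almost-everywhere finiteness of $G$ on $\tbe$.
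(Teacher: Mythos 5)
Your proof is correct, and it is worth contrasting with the paper's own argument since the two routes, while sharing the same engine, are organized quite differently. Both proofs ultimately rest on the fact that on an irrational rotation a nonzero constant cannot be a measurable coboundary. The paper reaches a cohomological equation on $\tal$ by cutting the boundedly supported $f$ into the $\be$-length slices $f_k = f\cdot\1_{[k\be,(k+1)\be)}$, projecting each slice to $\tal$, subtracting the two projected tiling identities, and then assembling a measurable $\psi$ on $\tal$ via a double telescoping sum so that $\psi(x)-\psi(x+\be)=\varphi(x)$; at that point it cites Anosov's theorem to conclude $\int_{\tal}\varphi = p\al-q\be=0$. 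You instead normalize away one tiling level by subtracting a multiple of the model tiling $\1_{[0,\al)}\ast\1_{[0,\be)}$ so that $\tilde f$ has $\al\Z$-level zero, use the boundedness of $\supp\tilde f$ to write $\tilde f(x)=g(x)-g(x-\al)$ with $g$ of bounded support (up to a null set), $\be$-periodize to get $G(x+\al)-G(x)=q_0$ on $\tbe$, and then, rather than quoting Anosov, reprove its constant-function special case directly via Birkhoff's ergodic theorem applied to $\1_{\{|G|\le R\}}$. Your route is symmetric to the paper's in its choice of circle ($\tbe$ rather than $\tal$), and the $p=0$ normalization plus the explicit telescoping cochain $g(x)=\sum_{k\le0}\tilde f(x+k\al)$ is a cleaner reduction than the paper's slice-and-reassemble $\psi$. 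The trade-off is that the paper gets to quote the more general Anosov lemma (which would also dispense with the need for $q_0$ to be constant), while your proof is self-contained at the cost of reproducing the Birkhoff argument that underlies Anosov's constant case. One small wording issue: $g$ is compactly supported only after modification on a null set (the vanishing for large positive $x$ comes from the almost-everywhere $\al\Z$-tiling identity at level $0$), but this does not affect the subsequent steps since $G$ and the coboundary identity only need to hold almost everywhere.
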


It is obvious that the result does not hold if $(p,q) = \lam (\be, \al)$
where $\lam$ is a scalar,
since in this case the   function $f = \lam \1_{[0, \al)} \ast \1_{[0,\be)}$
satisfies \eqref{eqA2.1.1} and has bounded support.

The next result  clarifies the role of the value $\al   + \be$
in \thmref{thmA3.1}. It turns out that for most level vectors
$(p,q)$  it is in fact the least possible value of $\mes(\supp f)$.

\begin{thm}
\label{thmA2.3}
Let $\al, \be$ be rationally  independent, and suppose
that $(p,q)$ is not proportional to any vector of
the form $(n,m)$ where $n,m$ are  nonnegative integers.
If a measurable function $f$ on $\R$ satisfies
\eqref{eqA2.1.1} then
$\mes (\supp f) \geq \al + \be$.
\end{thm}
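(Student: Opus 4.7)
My plan is to argue by contradiction: assuming $\mes(\supp f) < \al + \be$, I will show that $(p,q)$ must be proportional to a vector $(n,m)$ with $n,m \in \Z_{\ge 0}$, contrary to the hypothesis. Set $S := \supp f$ and, for $y \in \tal$ and $z \in \tbe$, define the fiber multiplicities $N_\al(y) := \#\{x \in S : \pal(x) = y\}$ and $N_\be(z) := \#\{x \in S : \pbe(x) = z\}$. The hypothesis excludes $(p,q) \propto (1,0)$ and $(p,q) \propto (0,1)$, so $p,q \neq 0$, and the tiling equations force $N_\al, N_\be \ge 1$ a.e. The identities $\int_\tal N_\al = \int_\tbe N_\be = \mes(S) < \al+\be$ then imply that the ``thin-fiber'' sets $E_\al := \{N_\al = 1\} \sbt \tal$ and $E_\be := \{N_\be = 1\} \sbt \tbe$ have complements of measure at most $\mes(S) - \al < \be$ and $\mes(S) - \be < \al$, respectively.

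On a thin $\al$-fiber over $y \in E_\al$, the unique point $x \in S$ with $\pal(x) = y$ must satisfy $f(x) = p$ by the $\al$-tiling equation, and analogously $f(x) = q$ on thin $\be$-fibers. Let $A := \{x \in S : f(x) = p\}$ and $B := \{x \in S : f(x) = q\}$; since $(p,q) \propto (1,1)$ is also excluded, $p \neq q$, so $A$ and $B$ are essentially disjoint and contain $S \cap \pal^{-1}(E_\al)$ and $S \cap \pbe^{-1}(E_\be)$ respectively. The crucial step is to upgrade this to a global statement: $f(x) \in \{p,q\}$ for a.e.\ $x \in S$. The picture to exploit is that, since $\al,\be$ are rationally independent, each $x \in \R$ is uniquely determined by the pair $(\pal(x),\pbe(x)) \in \tal \times \tbe$, so $f|_S$ is a weighting on the edges of a bipartite ``measure-preserving graph'' with vertex sets $\tal$ and $\tbe$, row-sums equal to $p$, and column-sums equal to $q$. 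When the edge measure $\mes(S)$ falls below the total vertex measure $\al+\be$, this graph is forced to be ``forest-like'' in a measure-theoretic sense, so that $f$ is rigidly determined by its boundary values on the thin fibers. Propagating these values along the dense orbits of the induced irrational rotations $y \mapsto y + \be$ on $\tal$ and $z \mapsto z + \al$ on $\tbe$ yields $f(x) \in \{p,q\}$ a.e.\ on $S$.

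Once $f = p\cdot\1_A + q\cdot\1_B$, the tiling equations become pointwise (a.e.)\ integer identities
\[
p\, N_\al^A(y) + q\, N_\al^B(y) = p, \qquad p\, N_\be^A(z) + q\, N_\be^B(z) = q,
\]
for the nonnegative integer-valued fiber multiplicities $N_\al^A,N_\al^B,N_\be^A,N_\be^B$ of $A$ and $B$. A short case analysis of the admissible integer solutions in terms of the ratio $q/p$ (real versus nonreal; rational versus irrational; positive versus negative), combined with the balance identities $\al\int N_\al^A = \be\int N_\be^A = \mes(A)$ and $\al\int N_\al^B = \be\int N_\be^B = \mes(B)$, then forces $(p,q) \propto (n,m)$ for some $n,m \in \Z_{\ge 0}$, contradicting the hypothesis. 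The main obstacle is the structural step of the second paragraph---ruling out any ``third value'' taken by $f$ on the bad set $S \setminus (\pal^{-1}(E_\al) \cup \pbe^{-1}(E_\be))$, whose measure can a priori be as large as $2\mes(S) - (\al+\be)$. This is precisely where the sharp bound $\mes(S) < \al+\be$ and the rational independence of $\al,\be$ must be used in tandem.
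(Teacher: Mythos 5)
Your first paragraph is sound: defining the fiber multiplicities, computing $\int_{\tal}N_\al=\int_{\tbe}N_\be=\mes(S)$, observing $N_\al,N_\be\ge 1$ a.e.\ because $p,q\neq 0$, bounding the measure of $\{N_\al\ge 2\}$ by $\mes(S)-\al$, and concluding that $f=p$ on the unique point over each thin $\al$-fiber (resp.\ $f=q$ on thin $\be$-fibers) are all correct and constitute a reasonable start.

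The step you yourself flag as the ``main obstacle'' is, however, a genuine gap, and I do not believe it can be repaired as stated. The claim that $\mes(S)<\al+\be$ forces $f(x)\in\{p,q\}$ for a.e.\ $x\in S$ is simply false, not merely hard. Consider the bipartite graph on $S$ (each $x\in S$ is an edge joining $\pal(x)\in\tal$ to $\pbe(x)\in\tbe$, with edge weight $f(x)$). A finite connected component which is a path $a_1\!-\!b_1\!-\!a_2\!-\!b_2\!-\!a_3$ with all $A$-vertex weights equal to $p$ and all $B$-vertex weights equal to $q$ forces the edge weights to be $p,\,q-p,\,2p-q,\,2q-2p$; the last equation $2q-2p=p$ forces $(p,q)\propto(2,3)$, and the interior edges carry the weight $q-p=p/2\notin\{p,q\}$. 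So even in the tree-like situation your heuristic invokes, edge weights are not confined to $\{p,q\}$. Concretely, the paper's own Lemma~\ref{lemFN1.2} produces tilings with $\mes(\supp f)<\al+\be$ in which $f$ takes the entries of a $p\times q$ doubly stochastic array as its values --- e.g.\ for $(p,q)=(2,3)$ one gets values $1,2$, not $\{2,3\}$. These examples are not counterexamples to the theorem (there $(p,q)$ is an integer vector), but they do rule out any argument that derives $f\in\{p,q\}$ a.e.\ from $\mes(S)<\al+\be$ alone; and the Kronecker/dense-orbit ``propagation'' sketch does not use the hypothesis on $(p,q)$ at all, so it cannot be a valid proof.

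The correct structural consequence of $\mes(S)<\al+\be$ is weaker but is exactly what's needed. One shows that the induced bipartite measure graph on $S$ must contain a connected component that is a \emph{finite tree}. The paper does this via an Euler-characteristic argument combined with an iterative leaves-removal process (Lemmas \ref{leavesBound}--\ref{lemJump}): if $\chi=m(A)+m(B)-\mes(S)>0$, then at each step a set of leaves of measure at least $\chi$ is removed, so after finitely many steps a ``measure jump'' must occur, and any jump vertex lives in a finite tree component. From such a component, one does \emph{not} attempt to pin down the individual edge weights. Instead, one sums the vertex weights: counted from the $A$-side the total weight is $mp$ (where $m$ is the number of $A$-vertices of the component), and counted from the $B$-side it is $nq$ (Lemma~\ref{DoubleCount}). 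Hence $mp=nq$, i.e.\ $(p,q)\propto(n,m)$ with $n,m$ positive integers, giving the contradiction. Your proposal should be redirected toward proving the existence of a finite component --- not the two-valuedness of $f$ --- and then applying this total-weight identity.
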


In particular this result applies if $f$ 
is nonnegative, or is in $L^1(\R)$, or has bounded support.
It follows from \propref{propA2.1} and 
 \thmref{thmS5.1} that in any one of these cases  
the tiling level vector $(p,q)$ must be proportional to $(\be, \al)$, 
and since $\al, \be$ are rationally  independent,
$(p,q)$ cannot therefore be proportional to any
integer  vector $(n,m)$
unless $p,q$ are both zero. So
we obtain:

\begin{cor}
\label{corA2.2}
Assume that a measurable function $f$ on $\R$ is nonnegative, or is in $L^1(\R)$, or has bounded support.
If $\al, \be$ are rationally  independent and 
 \eqref{eqA2.1.1} holds for some
nonzero vector $(p,q)$, then 
$(p,q)$ is proportional to $(\be, \al)$ and
$\mes (\supp f) \geq \al + \be$.
\end{cor}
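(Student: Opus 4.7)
The plan is to assemble \propref{propA2.1}, \thmref{thmS5.1} and \thmref{thmA2.3} according to the outline sketched in the paragraph just before the statement. First I would verify that under any of the three hypotheses on $f$, the tiling level vector $(p,q)$ must be proportional to $(\be,\al)$. If $f$ is nonnegative or lies in $L^1(\R)$, this is exactly \propref{propA2.1}. If $f$ is only assumed to have bounded support, then $f$ need not be integrable (a bounded support admits non-integrable singularities), so \propref{propA2.1} does not apply directly; instead I would use the contrapositive of \thmref{thmS5.1}, according to which the failure of $(p,q)$ to be proportional to $(\be,\al)$ forces $\supp f$ to be unbounded.

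Next I would rule out the exceptional proportionality excluded from \thmref{thmA2.3}. Write $(p,q) = \lam(\be,\al)$ with $\lam \neq 0$ (the nonzeroness of $\lam$ coming from $(p,q) \neq 0$ together with $\al,\be > 0$). Suppose for contradiction that $(p,q) = \mu(n,m)$ for some scalar $\mu$ and nonnegative integers $n,m$. From $(p,q) \neq 0$ we get $\mu \neq 0$, and matching components gives $\mu n = \lam\be$ and $\mu m = \lam\al$; since $\al,\be > 0$ and $\lam, \mu \neq 0$, both $n$ and $m$ must be strictly positive. Dividing one relation by the other yields $\be/\al = n/m \in \Q$, contradicting the rational independence of $\al$ and $\be$. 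Hence the hypothesis of \thmref{thmA2.3} is met, and that theorem delivers the desired lower bound $\mes(\supp f) \geq \al + \be$.

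The argument is essentially bookkeeping; all the substantive content is absorbed into \thmref{thmS5.1} and \thmref{thmA2.3}, so there is no fresh mathematical obstacle here. The only point that requires a bit of care is that the bounded-support case cannot be reduced to integrability and therefore genuinely requires \thmref{thmS5.1} rather than \propref{propA2.1}.
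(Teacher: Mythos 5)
Your proposal is correct and follows the same route the paper sketches in the paragraph preceding the corollary: \propref{propA2.1} handles the nonnegative and $L^1$ cases, the contrapositive of \thmref{thmS5.1} handles the bounded-support case, and then rational independence of $\al,\be$ rules out proportionality to a nonnegative integer vector so that \thmref{thmA2.3} applies. Your extra bookkeeping (checking that $\lam$, $\mu$, $n$, $m$ are nonzero) is sound and fills in exactly the details the paper leaves implicit.
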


We thus obtain that for rationally  independent
$\al, \be$,  the convolution $\1_{[0, \al)} \ast \1_{[0,\be)}$ 
is a function
minimizing the value of $\mes(\supp f)$ among all
nonnegative, or all integrable, or all boundedly supported,
 functions $f$ satisfying
  \eqref{eqA2.1.1} for some nonzero tiling level vector $(p,q)$.

\subsection{}
We now consider the remaining case 
not covered by \thmref{thmA2.3}, namely,
the case where the tiling level vector
 $(p,q)$ is proportional to some vector
$(n,m)$ such that $n,m$ are nonnegative integers.
By multiplying the vector $(p,q)$ on an
appropriate scalar we may suppose that
$p,q$ are by themselves nonnegative integers,
and by factoring out their greatest common divisor
we may also assume $p,q$ to be \emph{coprime}.

Interestingly, it turns out that in this case
 the measure of $\supp f$ can drop below $\al + \be$,
in a magnitude that depends on the specific values
of the tiling levels $p$ and $q$.

\begin{thm}
\label{thmA2.5}
Let $\al, \be$ be rationally  independent, and let
 $p,q$ be two positive coprime integers. 
For any $\eps>0$ there is
a measurable function $f$
on $\R$ satisfying
\eqref{eqA2.1.1} such that
\begin{equation}
\label{eqA2.5.1}
\mes(\supp f) < \al + \be - \min \Big\{\frac{\al}{q}, \frac{\be}{p} \Big\} + \eps.
\end{equation}
\end{thm}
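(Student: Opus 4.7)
My plan is to prove the theorem by an explicit construction. By symmetry I may assume $\alpha/q \le \beta/p$, so the target bound becomes $\mes(\supp f) < \alpha + \beta - \alpha/q + \varepsilon$. The starting point is the construction used for \thmref{thmA3.1}, which already provides an $f$ with $\mes(\supp f) \le \alpha + \beta$ for an \emph{arbitrary} level vector $(p,q)$. The aim is to modify that construction to save approximately $\alpha/q$ of support, at the cost of introducing a correction of support measure less than $\varepsilon$.

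The main idea is to exploit the integer structure of $p,q$ by splitting the level vector as $(p,q) = (p,0) + (0,q)$ and writing $f = f_1 + f_2$, where $f_1$ tiles $\zal$ at level $p$ with $\beta$-periodization identically zero, and $f_2$ tiles $\zbe$ at level $q$ with $\alpha$-periodization identically zero. A natural candidate for $f_1$ starts from $p\cdot\1_{[0,\alpha)}$ (correct $\alpha$-level but nontrivial $\beta$-periodization) and adds a compensating function that kills the $\beta$-periodization; analogously for $f_2$ starting from $q\cdot\1_{[a,a+\beta)}$ for a suitable shift $a$. The crucial trick is to choose $a$ so that the ``main bodies'' of $f_1$ and $f_2$ overlap on an interval of length exactly $\alpha/q$: in effect, one of the $q$ slots of length $\alpha/q$ inside the fundamental $\beta$-block of $f_2$ is placed inside the fundamental $\alpha$-block of $f_1$, and on the overlap the contributions to the $\alpha$-periodization can be made to cancel. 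This is where coprimality of $p,q$ enters: for the overlap to be consistent with the two integer periodization constraints simultaneously, one needs a B\'ezout identity $ap-bq=\pm 1$ to prescribe the correct ``phase'' of the overlap; without $\gcd(p,q)=1$ the arithmetic fails.

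The main obstacle is the construction of the compensating functions for $f_1$ and $f_2$. Each must have vanishing periodization on one side together with a prescribed, nonzero-mean periodization on the other; no such function exists in $L^1(\R)$, since its integral is then simultaneously determined by the two periodizations and the values conflict. So the compensating functions are necessarily unbounded and supported on finite-measure but unbounded sets (consistent with \thmref{thmS5.1}). I would construct them by an iterative telescoping scheme built on the equidistribution of the irrational rotation $x \mapsto x+\alpha \pmod{\beta}$ on $\tbe$: at each stage one of the two periodizations is corrected exactly while the other is perturbed by a residual whose support measure drops geometrically, so the infinite sum converges with total compensating support $<\varepsilon$. The delicate part is combining this convergence argument with the arithmetic bookkeeping of the overlap of $f_1$ and $f_2$ to obtain the sharp bound $\mes(\supp f) < \alpha+\beta-\alpha/q+\varepsilon$ in the limit.
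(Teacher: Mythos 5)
Your decomposition $f = f_1 + f_2$ with $(p,q) = (p,0) + (0,q)$ is a genuinely different route from the paper's, but the mechanism by which the overlap of the two ``main bodies'' produces a measure saving is not actually coherent as described, and this is a real gap. Suppose the main bodies $p\cdot\1_{[0,\alpha)}$ and $q\cdot\1_{[a,a+\beta)}$ overlap on an interval $I$ of length $\alpha/q$. If both $f_1$ and $f_2$ equal their main-body values on $I$, then $f = p + q \neq 0$ there, so the overlap does not remove any support from $f$; moreover $\pal(f)$ on $\pal(I)$ now picks up the extra contribution from $f_2$, which is not zero unless the correction $g_2$ precisely cancels it, and the correction you would need has a $\beta$-periodization equal to $-q\cdot\pbe(\1_{[a,a+\beta)})$, a function that is nonzero on a full-measure subset of $\tbe$, forcing $\mes(\supp g_2)\geq\beta$ if one insists on the additive form $f_2 = q\cdot\1_{[a,a+\beta)}+g_2$. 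Alternatively, if the intent is that $f$ should \emph{vanish} on $I$ (so the overlap genuinely drops out of $\supp f$), then $g_1+g_2 = -(p+q)$ on all of $I$, which forces $\mes(\supp g_1\cup\supp g_2) \geq \alpha/q$, directly contradicting the requirement that the compensating pieces have total support $< \eps$. In short, the $(p,0)+(0,q)$ split decouples the two constraints in a way that makes the two supports essentially independent, and one cannot simultaneously keep the corrections small and force the main bodies to overlap without breaking one of the two periodization identities. The appeal to a B\'ezout identity is not made to do any work in the sketch.

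The paper instead never decomposes the level vector. Its savings comes from a single ``joint'' piece constructed in \lemref{lemFN1.2}: one picks $p+q-1$ short intervals $L_{ij}$ of length $\gam$ in $\R$ (indexed by the nonzero entries of a $p\times q$ doubly stochastic array of minimal support, \thmref{thmC1.6}), arranged via Kronecker's theorem so that they project to only $q$ disjoint intervals in $\tal$ and only $p$ disjoint intervals in $\tbe$. The row/column sum conditions of the array give $\pal(f)=p$ and $\pbe(f)=q$ on those small projection sets, while the support measure $(p+q-1)\gam$ is strictly less than $q\gam + p\gam$ — that gap of exactly $\gam$ is the saving, and coprimality of $p,q$ enters precisely through the minimal support $p+q-\gcd(p,q)=p+q-1$ of the array. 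The remainder of $\tal$ and $\tbe$ is then filled in by the general construction \lemref{lemFL1.2} at full cost $m(A_2)+m(B_2)$, and letting $\gam\to\min\{\al/q,\be/p\}$ gives the bound. Your iterative telescoping scheme is in the spirit of that general lemma, but without the doubly-stochastic-array piece there is no place where the support of $f$ is cheaper than the sum of the measures of the two projections, which is exactly what the theorem demands.
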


The next result shows
that the upper estimate \eqref{eqA2.5.1}
is actually sharp.

\begin{thm}
\label{thmA2.4}
Let $f$ be a measurable function on $\R$ satisfying
\eqref{eqA2.1.1} where 
$\al, \be$ are rationally  independent
and $p,q$ are positive, coprime integers. Then
\begin{equation}
\label{eqA2.4.1}
\mes(\supp f) > \al + \be - \min \Big\{\frac{\al}{q}, \frac{\be}{p} \Big\}.
\end{equation}
\end{thm}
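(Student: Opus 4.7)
The plan is to argue by contradiction. Suppose $\mu := \mes(\supp f) \le \al + \be - \min\{\al/q, \be/p\}$; after interchanging $(\al,p) \leftrightarrow (\be,q)$ if necessary, we may assume $\al/q \le \be/p$, so the hypothesis reads $\mu \le \al + \be - \al/q$. For $y \in [0,\al)$ set $N_\al(y) := |\supp f \cap (y + \al\Z)|$ and define $N_\be$ on $[0, \be)$ analogously. Since $p, q \ne 0$, on each fiber at least one summand is nonzero, so $N_\al, N_\be \ge 1$ a.e., and Fubini gives $\int_0^\al N_\al = \int_0^\be N_\be = \mu$. The elementary inequality $\int N_\al \ge \mes\{N_\al = 1\} + 2 \mes\{N_\al \ge 2\}$ then yields $\mes\{N_\al = 1\} \ge 2\al - \mu$, and symmetrically $\mes\{N_\be = 1\} \ge 2\be - \mu$. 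Crucially, on the preimage $\pal^{-1}\{N_\al = 1\}\cap\supp f$ the unique value of $f$ on each $\al$-fiber must equal $p$, and on $\pbe^{-1}\{N_\be = 1\}\cap\supp f$ it must equal $q$; these two ``rigid'' regions are disjoint (because $p \ne q$, with the degenerate case $p = q = 1$ handled separately, where the method directly forces $\al = \be$, contradicting rational independence).

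The heart of the proof is to pin down the global integral of $f$ and reach a contradiction with the rational independence of $\al, \be$. If $f$ were a priori in $L^1(\R)$, integrating the two tiling identities over one period each (by Fubini) would immediately give $\int_\R f = p\al = q\be$, which is impossible when $\al/\be \notin \Q$ and $\gcd(p,q) = 1$. To handle general $f$ (which need not lie in $L^1$ merely because $\mes(\supp f) < \infty$), I would truncate: set $f_R := f \cdot \1_{[-R,R]}$ and compute $\int f_R$ in two ways using the two tiling identities, carefully tracking the boundary remainders. On the rigid regions $|f|$ is uniformly bounded by $\max\{p,q\}$, so the corresponding boundary errors are of order $O(\al) + O(\be)$ and become negligible after a proper averaging. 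The hypothesis $\mu \le \al + \be - \al/q$ is exactly what is needed to control the residual ``mixed'' region $\{N_\al \ge 2\} \cap \{N_\be \ge 2\}$ so that the two computations agree along a subsequence $R_j \to \infty$, forcing $p\al = q\be$ and producing the desired contradiction.

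The main obstacle, as I see it, is the quantitative bookkeeping in the truncation step: showing, without an a priori $L^1$ bound on $f$, that the threshold value $\al/q$ on the right-hand side is precisely what makes the two computations of $\int f_R$ agree in the limit. The appearance of $\al/q$ should trace back to the coprimality of $p$ and $q$: the smallest nonzero integer solution of $np = mq$ is $(n,m) = (q,p)$, so $\al/q$ is the minimal ``period'' that an extremal configuration of $\supp f$ compatible with integer tiling levels $p$ and $q$ can exhibit. I expect the detailed execution to treat the two rigid regions separately (using dominated convergence via the uniform bound $|f| \le \max\{p,q\}$) and the mixed region via the measure estimate derived from the bound on $\mu$, and then to extract the relation $p\al = q\be$ in the limit, closing the argument by invoking the rational independence of $\al$ and $\be$.
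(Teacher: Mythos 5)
Your opening moves are sound and recover a piece of the paper's argument. Defining $N_\al(y) = |\supp f \cap (y+\al\Z)|$, noting $N_\al \ge 1$ a.e., computing $\int_0^\al N_\al = \mu$ by Fubini, and deducing $\mes\{N_\al=1\} \ge 2\al - \mu$ is exactly the paper's Lemma~\ref{lemJ1.1} (phrased there in graph language: the set of ``leaves'' on the $A$-side has measure at least $2m(A) - m(\Om)$). The observation that $f\equiv p$ on the $\al$-rigid region, $f\equiv q$ on the $\be$-rigid region, that these are disjoint when $p\ne q$, and that $p=q=1$ is dispatched by $\mu=\al$ forcing $\al=\be$, is all correct.

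The gap is in the second half: the truncation strategy cannot deliver $\int_\R f = p\al = q\be$. Under the hypothesis $\mu\le\al+\be-\al/q$ the mixed region $M$ (fibers with $N_\al\ge 2$ and $N_\be\ge 2$) indeed has controlled \emph{measure}, but you have no control whatsoever on $|f|$ there, so no dominated-convergence or boundary-error argument on $f_R = f\cdot\1_{[-R,R]}$ can close the loop. This is not a bookkeeping issue but a structural one: the constructions of Lemma~\ref{lemFL1.2} and Theorem~\ref{thmA2.5} produce simultaneously tiling $f$ with $\mes(\supp f)$ only slightly above the threshold whose values on the analogue of $M$ are completely arbitrary measurable functions, so $f$ is generically not locally integrable there and $\int f$ has no meaningful principal value. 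Your intuition that ``$\al/q$ traces back to the smallest solution of $np=mq$ being $(q,p)$'' is exactly right, but to exploit it one must look at \emph{which fibers link to which} rather than at integrals of $f$. The paper does this by building the bipartite graph $G(\supp f)$ with vertices $\pal(\supp f)\cup\pbe(\supp f)$ and edges indexed by points of $\supp f$, running an iterative leaves-removal process on it, tracking a measure-theoretic Euler characteristic, and isolating a ``total jump set'' $J$ of representatives of finite tree components; the two bounds $m(J)\ge\al+\be-\mu$ (Lemma~\ref{lemJMlower}) and $m(J)\le\min\{\al/q,\be/p\}$ (Lemma~\ref{TreeRep}, where coprimality enters exactly as you anticipated, via Lemma~\ref{DoubleCount}) give \eqref{eqA2.4.1}, with strictness extracted from rational independence at the very end. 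Your rigid-region computation is the $n=0$ step of that iteration, but without the recursion and the finite-tree analysis the argument does not reach the coprimality constraint, and the attempt to shortcut through $\int_\R f$ runs into the non-$L^1$ wall.
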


The last two results yield that if
 the tiling levels $p,q$ are positive, coprime integers,
then  the right hand side of \eqref{eqA2.4.1}
is the infimum of the values of
$\mes(\supp f)$ over all measurable 
functions $f$ satisfying
\eqref{eqA2.1.1}, but this
infimum cannot be attained.

In Theorems \ref{thmA2.5} and \ref{thmA2.4}
the tiling levels $p,q$ are assumed to be both nonzero,
which does not cover the case where
$(p,q) = (1,0)$ or $(0,1)$. 
The following result provides the sharp answer in this last case.
By symmetry, it is enough to consider
 $(p,q)=(1,0)$.

\begin{thm}
\label{thmA2.6}
Let $\al, \be$ be rationally  independent,
and let $(p,q)=(1,0)$.
For any $\eps>0$ there is
a measurable function $f$
on $\R$ satisfying
\eqref{eqA2.1.1} 
such that $\mes(\supp f) < \al + \eps$.
Conversely,  any measurable $f$ satisfying
\eqref{eqA2.1.1} must have
$\mes(\supp f) > \al$.
\end{thm}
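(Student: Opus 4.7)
For the lower bound, I would argue by contradiction. Suppose $\mes(\supp f) \le \alpha$. Since the $\alpha$-periodization equals $1$ a.e., the projection $\pi_\alpha(\supp f) \subset \R/\alpha\Z$ has full measure (else some $x$ would give sum $0$), hence $\mes(\supp f) \ge \alpha$ and equality must hold. Equality forces $\pi_\alpha$ to be essentially injective on $\supp f$: for a.e.\ $x$ there is at most one $k$ with $x-k\alpha\in\supp f$, and since the total must equal $1$, we must have $f\equiv 1$ on $\supp f$, i.e.\ $f=\1_E$ a.e.\ where $E=\supp f$. But then the second tiling equation becomes a sum of $0$'s and $1$'s equaling $0$ a.e., so every term vanishes; this forces $\1_E\equiv 0$ a.e., contradicting $\mes(E)=\alpha$. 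Hence $\mes(\supp f)>\alpha$, as claimed.

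For the upper bound I would follow the pattern of the proof of \thmref{thmA2.5}, which is the natural analogue: formally substituting $q=0$ and $p=1$ into the bound $\alpha+\beta-\min\{\alpha/q,\beta/p\}$ gives exactly $\alpha$, so one expects the same construction scheme (presumably based on the doubly stochastic arrays / measure preserving graphs framework alluded to in the keywords) to carry through, with only the endpoint case requiring separate treatment. The plan is to write $f=\1_{[0,\alpha)}+h_1+h_2$ where $h_1$ is supported inside $[0,\alpha)$ and takes the value $-1$ on a set $F\subset[0,\alpha)$ of measure $\alpha-\delta$ (so $f\equiv 0$ on $F$, and $f|_{[0,\alpha)}=\1_{[0,\alpha)\setminus F}$ has support of measure $\delta$), and $h_2$ is supported in $\R\setminus[0,\alpha)$. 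The tiling equations translate into the requirements that $h_2$'s $\alpha$-periodization equals $\1_F$ on $\R/\alpha\Z$ and that its $\beta$-periodization equals $-\1_{([0,\alpha)\setminus F)+\beta\Z}$. The first requirement fits exactly the budget: placing, for each $y\in F$, a cancelling ``copy'' at $y+m(y)\alpha\notin[0,\alpha)$ uses support of measure $\mes(F)=\alpha-\delta$, so altogether $\mes(\supp f)\le\delta+(\alpha-\delta)+\text{(extra)}$, with extra coming from any additional terms needed to correct the $\beta$-periodization.

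The heart of the construction is choosing the integers $m(y)$ (or, more generally, refining the pieces of $F$ into many thin strips and assigning to each a carefully chosen shift $m_j\alpha+n_j\beta$) so that the resulting $\beta$-periodization is \emph{exactly} $-\1_{E+\beta\Z}$ with $E=[0,\alpha)\setminus F$ of measure $\delta$. This cannot be done with any absolutely convergent sum, because an $L^1$ argument would force $\int h_2=0$ from the $\alpha$-side and $\int h_2=-\delta$ from the $\beta$-side; hence one must use a conditionally convergent, non-integrable ensemble of corrections. Exploiting the density of $\alpha\Z+\beta\Z$ and the equidistribution of $\{n\alpha\bmod\beta\}$ in $[0,\beta)$, one can telescope the needed ``mass'' $-\1_{E+\beta\Z}$ along an infinite chain of $\alpha$-balanced shifts so that each successive step relocates the remaining $\beta$-periodization error into a set of geometrically shrinking measure, while contributing only a controlled amount of new support. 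Taking $\delta\to 0$ (and the construction's parameter to the limit) yields $\mes(\supp f)<\alpha+\eps$.

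The main obstacle is precisely this last technical step: bookkeeping the construction so that (i) the infinite sum converges pointwise a.e., (ii) both periodization identities hold \emph{exactly}, not merely in an approximate sense, and (iii) the cumulative measure of the auxiliary support stays within the $\eps$-budget. This is exactly the kind of combinatorial/measure-theoretic bookkeeping for which the doubly stochastic / measure-preserving-graph formalism of the paper is designed; I would invoke the relevant lemma from the main body of the paper (the same one used to prove \thmref{thmA2.5}) to organize the correction terms, and check that the degenerate case $q=0$ falls within its scope.
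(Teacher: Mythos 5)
Your lower bound argument is essentially the paper's: from $\pi_\alpha(f)=1$ a.e.\ you get $m(\supp f)\geq\alpha$, and equality forces $\pi_\alpha$ to be essentially injective on the support, hence $f=\1_{\supp f}$ a.e., which is incompatible with $\pi_\beta(f)=0$ a.e.\ because a nonnegative indicator cannot have vanishing $\beta$-periodization unless its support is null. That part is correct and matches the paper.

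Your upper bound plan, however, misses the clean route and is not carried through. You are right that the answer should be $\alpha$ in the limit, but your proposed decomposition $f=\1_{[0,\alpha)}+h_1+h_2$, and the subsequent talk of ``telescoping'', ``conditionally convergent ensembles'' and ``geometrically shrinking error'', is essentially an attempt to re-derive from scratch a construction that the paper already packages into \lemref{lemFL1.2}. You also suggest invoking the doubly-stochastic-array lemma (\lemref{lemFN1.2}) used for \thmref{thmA2.5}, but that lemma requires $p,q$ to be \emph{positive} coprime integers and does not cover $q=0$. The paper's proof bypasses all of this: simply apply \lemref{lemFL1.2} with $A=\T_\alpha$, $B=[0,\eps]\subset\T_\beta$, $\varphi\equiv 1$, $\psi\equiv 0$. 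The lemma produces a set $\Omega$ of measure $m(A)+m(B)=\alpha+\eps$ and a function $f$ supported on $\Omega$ with $\pi_\alpha(f)=1$ a.e.\ on $\T_\alpha$ and $\pi_\beta(f)=0$ a.e.\ on $B$. The key observation you did not find is that because $\pi_\beta(\Omega)=B$, the $\beta$-periodization of $f$ \emph{automatically} vanishes on $\T_\beta\setminus B$ as well (there are simply no support points there), so the second tiling equation holds on all of $\T_\beta$ for free. No starting seed $\1_{[0,\alpha)}$, no $h_1,h_2$ bookkeeping, and no case-checking of whether $q=0$ fits the doubly-stochastic framework is needed. As written, your plan has a genuine gap: the existence of the correction function $h_2$ with the prescribed $\alpha$- and $\beta$-periodizations and controlled support is precisely the hard content, and you neither prove it nor correctly identify the lemma that does.
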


The results above thus fully resolve
the problem for rationally  independent $\al, \be$.

\subsection{}
We now move on to deal with the other case where
$\al, \be$ are linearly dependent over the rationals.
Then the vector $(\al,\be)$ is 
proportional to some vector
$(n,m)$ such that $n,m$ are positive integers.
By rescaling, it is enough to consider the
case $(\alpha,\beta) = (n,m)$ where $n,m$ are
positive integers. 

The tiling condition  \eqref{eqA2.1.1}  thus takes the form
\begin{equation}
\label{eqA2.7.1}
\sum_{k \in \Z}f(x-kn)=p,
\quad
\sum_{k \in \Z}f(x-km)=q \quad\text{a.e.}
\end{equation}
where $n,m$ are positive 
integers, $p,q$ are complex numbers,
and both series in \eqref{eqA2.7.1} converge absolutely a.e.

In this case our first result shows that 
the tiling levels $p,q$ cannot be arbitrary.

\begin{thm}
\label{thmA4.1}
Let $n,m$ be positive 
integers, and let
$f$ be a measurable function on $\R$ 
satisfying \eqref{eqA2.7.1}. Then the vector
$(p,q)$ must be proportional to $(m,n)$.
\end{thm}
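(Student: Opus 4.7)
The plan is to exploit the common coarser lattice $d\mathbb{Z}$, where $d = \gcd(n, m)$. Writing $n = da$ and $m = db$ with $\gcd(a, b) = 1$, both $n\mathbb{Z}$ and $m\mathbb{Z}$ are subgroups of $d\mathbb{Z}$ of indices $a$ and $b$, respectively. Consequently, any $d\mathbb{Z}$-coset decomposes simultaneously as a disjoint union of $a$ cosets of $n\mathbb{Z}$ and as a disjoint union of $b$ cosets of $m\mathbb{Z}$. I will use this combinatorial fact to compute one sum in two different ways and equate the results.

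First I would pick a point $x_0 \in \R$ at which both identities of \eqref{eqA2.7.1} hold with absolute convergence simultaneously at every point of the countable translate $x_0 + d\mathbb{Z}$. The exceptional set for each identity has measure zero, and a countable union of null sets is null, so almost every $x_0$ qualifies. I would then consider the sum
\[
S \;=\; \sum_{k \in \mathbb{Z}} f(x_0 + kd)
\]
over the $d\mathbb{Z}$-orbit $O = x_0 + d\mathbb{Z}$.

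The core of the argument is to compute $S$ in two ways. Grouping the terms according to the $a$ cosets $x_0 + id + n\mathbb{Z}$ for $i = 0, 1, \ldots, a-1$, each inner sum is absolutely convergent and equals $p$ by the first identity in \eqref{eqA2.7.1}; this shows that $S$ is absolutely convergent with $S = ap$. Grouping the same terms instead according to the $b$ cosets $x_0 + jd + m\mathbb{Z}$ for $j = 0, 1, \ldots, b-1$ yields $S = bq$ in exactly the same way. Equating the two expressions gives $ap = bq$, i.e.\ $np = mq$, which is precisely the assertion that $(p,q)$ is proportional to $(m,n)$.

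There is no serious obstacle here: the rearrangement of the doubly-indexed series in two different orders is legitimate precisely because of the absolute convergence assumption, and the arithmetic content reduces to the identity $|d\mathbb{Z}/n\mathbb{Z}|\cdot p = |d\mathbb{Z}/m\mathbb{Z}|\cdot q$. This side-steps the need to assume $f \in L^1$ or even $f \in L^1_{\mathrm{loc}}$, which is why no integrability trick is required, in contrast to the proof of \propref{propA2.1}.
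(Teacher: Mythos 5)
Your proof is correct, and the underlying mechanism is the same as the paper's: choose a generic point, sum $f$ over an arithmetic orbit, and compute that sum two ways by regrouping an absolutely convergent series according to the cosets of $n\Z$ and of $m\Z$. The difference is organizational. The paper first proves a reduction lemma (\lemref{lemH1.1}) showing that the slice $f_x(t) = f(x+t)$ tiles $\Z$ simultaneously for a.e.\ $x$, and then establishes the conclusion as a self-contained statement about functions on $\Z$ (\propref{propH2.1}), double-counting over the whole orbit $x_0+\Z$; you bypass that reduction and sum directly over the coarser orbit $x_0 + \gcd(n,m)\Z$, which is the smallest lattice containing both $n\Z$ and $m\Z$. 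Your route is a bit leaner for proving \thmref{thmA4.1} alone, whereas the paper's detour through $\Z$ also yields the by-product $f_x \in \ell^1(\Z)$ and the discrete \propref{propH2.1}, both of which get reused in the proofs of \thmref{thmA4.3} and \thmref{thmA4.2}. One small point worth making explicit in your write-up: the inner sum $\sum_k f(x_0+id+kn)$ equals $\sum_k f((x_0+id)-kn) = p$ after reindexing $k \mapsto -k$, which is why the hypothesis applies verbatim at the point $x_0 + id$.
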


This is not quite obvious since $f$ is  neither
assumed to be   
nonnegative nor in $L^1(\R)$, so the
conclusion does not follow from
\propref{propA2.1}.
Moreover, \thmref{thmA4.1} is in
sharp contrast to \thmref{thmA3.1}
which states that for rationally  independent
$\al, \be$ there exist tilings 
\eqref{eqA2.1.1}  such that the levels $p,q$ are
arbitrary complex numbers.

The next result gives a lower bound
for the support size of a function $f$
that satisfies the simultaneous tiling
condition \eqref{eqA2.7.1}
with a nonzero tiling level vector $(p,q)$.

\begin{thm}
\label{thmA4.3}
Let $f$ be a measurable function on $\R$ satisfying
\eqref{eqA2.7.1} where 
$n,m$ are positive integers
and the vector $(p,q)$ is nonzero. Then
\begin{equation}
\label{eqA4.3.11}
\mes(\supp f) \geq n + m - \gcd(n,m).
\end{equation}
\end{thm}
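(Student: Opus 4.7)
The plan is to reduce to a discrete tiling problem on each coset of $d\Z$, where $d = \gcd(n,m)$, and then to a bipartite-graph connectivity argument on $\Z/n'\Z \times \Z/m'\Z$, where $n = dn'$ and $m = dm'$ so that $\gcd(n',m') = 1$. By \thmref{thmA4.1}, $(p,q)$ is proportional to $(m,n)$; since $(p,q)$ is nonzero, both $p$ and $q$ are nonzero and $q/p = n/m = n'/m'$.

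First I would fiber $\R$ over $[0,d)$ via $x = y_0 + kd$ with $y_0 \in [0,d)$ and $k \in \Z$. Since $n, m \in d\Z$, each coset $y_0 + d\Z$ is invariant under translation by $n$ or $m$, and the tilings \eqref{eqA2.7.1} restrict fiberwise to discrete tilings of $\Z$. Setting $g_{y_0}(k) := f(y_0 + kd)$, one obtains, for almost every $y_0 \in [0,d)$,
\begin{equation*}
\sum_{j \in \Z} g_{y_0}(k - jn') = p, \qquad \sum_{j \in \Z} g_{y_0}(k - jm') = q \qquad (k \in \Z),
\end{equation*}
with both series absolutely convergent. We may assume $\mes(\supp f) < \infty$, so by Fubini $\mes(\supp f) = \int_0^d |\supp g_{y_0}| \, dy_0$ and $g_{y_0}$ has finite support for a.e.\ $y_0$. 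It then suffices to prove the discrete lemma: any finitely supported $g \colon \Z \to \C$ satisfying these tilings with $p, q \neq 0$ has $|\supp g| \geq n' + m' - 1$.

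To prove the discrete lemma, use the Chinese Remainder Theorem to identify $\Z/n'\Z \times \Z/m'\Z$ with $\Z/n'm'\Z$, and define
\begin{equation*}
F(a,b) := \sum_{\substack{l \in \Z \\ l \equiv a \,(n'),\; l \equiv b \,(m')}} g(l) \qquad \bigl((a,b) \in \Z/n'\Z \times \Z/m'\Z\bigr),
\end{equation*}
a finite sum. The tilings say exactly that each row of $F$ sums to $p$ and each column to $q$, and the disjointness of residue classes gives $|\supp F| \leq |\supp g|$. Form the bipartite graph $\mathcal{G}$ on the vertex set $\Z/n'\Z \sqcup \Z/m'\Z$ with an edge $\{a,b\}$ for each pair with $F(a,b) \neq 0$; the number of edges equals $|\supp F|$. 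Since $p, q \neq 0$, no vertex is isolated. If $\mathcal{G}$ had a connected component $C$ with row set $R$ and column set $S$, then $F(a,b) = 0$ whenever $a \in R, b \notin S$ or $a \notin R, b \in S$, so summing the entries of $F$ inside $R \times S$ by rows and by columns gives $|R|\,p = |S|\,q$. Thus $|R| m' = |S| n'$; coprimality of $n'$, $m'$ forces $n' \mid |R|$ and $m' \mid |S|$, and with $1 \leq |R| \leq n'$, $1 \leq |S| \leq m'$ this yields $(|R|, |S|) = (n', m')$, so $C$ is all of $\mathcal{G}$. Hence $\mathcal{G}$ is connected on $n' + m'$ vertices, giving at least $n' + m' - 1$ edges, i.e.\ $|\supp F| \geq n' + m' - 1$.

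Integrating the a.e.\ bound $|\supp g_{y_0}| \geq n' + m' - 1$ over $[0, d)$ gives $\mes(\supp f) \geq d(n' + m' - 1) = n + m - d$. The combinatorial bipartite step is the essential ingredient, with coprimality crucial for excluding proper components; the fiber decomposition and the reduction to finitely supported $g_{y_0}$ are routine once one justifies the exchange of summation and integration in the tiling conditions.
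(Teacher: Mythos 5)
Your proof is correct and reaches the same bound by the same overall reduction (real line to integers, then a mod-$n'm'$ aggregation), but it differs from the paper's proof in one substantive respect: the paper reduces to the integer case via its Lemma~\ref{lemH1.1} with unit step and then, in Theorem~\ref{thmH3.1}, invokes the support lower bound for complex-valued matrices with constant row and column sums (Theorem~\ref{thmD1.1}, quoted from \cite{EL22}); you instead fiber directly over $[0,d)$ with step $d=\gcd(n,m)$ (collapsing the paper's two-stage reduction into one) and then \emph{prove} the needed coprime-case bound from scratch via a bipartite-connectivity argument. That argument is sound: rows sum to $p\neq 0$ and columns to $q\neq 0$, so no vertex is isolated; for any component with row set $R$ and column set $S$, double-counting the mass on $R\times S$ gives $|R|\,p=|S|\,q$, hence $|R|\,m'=|S|\,n'$, and coprimality of $n',m'$ together with $1\le|R|\le n'$, $1\le|S|\le m'$ forces $(|R|,|S|)=(n',m')$; so the graph is connected on $n'+m'$ vertices and has at least $n'+m'-1$ edges, i.e.\ $|\supp F|\ge n'+m'-1$. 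What you gain is self-containment (no appeal to \cite{EL22}), at the mild cost of only establishing the coprime case of that external result --- which is all you need after the step-$d$ fibering. One small point worth making explicit: absolute convergence of the two tiling series already forces each $g_{y_0}\in\ell^1(\Z)$, and if $\supp g_{y_0}$ is infinite the bound is vacuous, so the "finitely supported" reduction really costs nothing; likewise the a.e.\ validity of the fiberwise tiling identity needs the same Fubini/translation-invariance argument the paper spells out in Lemma~\ref{lemH1.1}.
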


We will also establish that  in fact
the lower bound in \thmref{thmA4.3}
is  sharp.
Due to \thmref{thmA4.1}, it
 suffices
to prove this for the tiling level vector 
$(p,q) = (m,n)$.

\begin{thm}
\label{thmA4.2}
Let $n,m$ be positive 
integers, and let
$(p,q)=(m,n)$. Then there is a non\-negative,
measurable function $f$
on $\R$ satisfying \eqref{eqA2.7.1} and such that
$\supp f$ is an
interval of length $n+m-\gcd(n,m)$.
\end{thm}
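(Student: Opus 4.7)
The plan is to exhibit an explicit nonnegative function achieving the bound. Write $d = \gcd(n,m)$, $n = dn'$, $m = dm'$, so $\gcd(n',m') = 1$. I would take
\begin{equation*}
f(x) := d \sum_{j=0}^{n'-1} \1_{[jd,\, jd+m)}(x),
\end{equation*}
which can be viewed as the convolution $d \cdot \1_{[0,m)} \ast \mu$ with the atomic measure $\mu = \sum_{j=0}^{n'-1} \delta_{jd}$. Since the shifted intervals $[jd, jd+m)$ have consecutive left endpoints at distance $d \leq m$, they overlap and their union equals $[0,(n'-1)d+m) = [0,n+m-d)$. Because $f \geq d > 0$ on this interval and vanishes outside, $\supp f$ is an interval of the desired length $n+m-d$.

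The $m\Z$-tiling at level $n$ is the easy identity to verify: for each fixed $j$, the translates $[jd+km,\, jd+(k+1)m)$, $k\in\Z$, partition $\R$, so $\sum_{k\in\Z} \1_{[jd,jd+m)}(x-km) = 1$ for a.e.\ $x$. Summing over $j$ and multiplying by $d$ yields $\sum_{k\in\Z} f(x-km) = dn' = n$, as required.

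The only substantive step is the $n\Z$-tiling at level $m$. Plugging in the definition,
\begin{equation*}
\sum_{k\in\Z} f(x-kn) = d \cdot \#\bigl\{(j,k) : 0 \leq j \leq n'-1,\; k \in \Z,\; jd+kn \in (x-m, x]\bigr\}.
\end{equation*}
Since $n = dn'$ we have $jd+kn = d(j+kn')$, and the division-with-remainder decomposition $\Z = \bigsqcup_{j=0}^{n'-1}(j+n'\Z)$ shows that $(j,k)\mapsto jd+kn$ is a bijection from $\{0,\ldots,n'-1\}\times\Z$ onto $d\Z$. Hence the count reduces to the number of integers $\ell$ with $d\ell \in (x-m, x]$, i.e.\ the number of integers in a half-open interval of length $m/d = m'$, which for a.e.\ $x$ equals exactly $m'$. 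This yields the desired value $dm' = m$.

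The main obstacle is really just writing down the correct construction; once the function $f$ is in hand, everything reduces to the elementary coset identity used above, and no Fourier or measure-theoretic machinery is needed.
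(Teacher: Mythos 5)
Your construction is correct, and it gives (as one can check) the very same step function as the paper's, but you arrive at it by a genuinely different route. The paper first develops a discrete theory on $\Z$: Theorem~\ref{thmH3.2} produces a nonnegative $g$ on $\Z$ supported on $n+m-\gcd(n,m)$ consecutive integers (built as the discrete convolution $\chi_{n'}\ast\chi_{m'}$ in the coprime case, then replicated $d$ times along each coset of $d\Z$), and then Theorem~\ref{thmA4.2} is deduced by the transfer principle of \lemref{lemH1.1}, setting $f(x+t)=g(t)$ for $x\in[0,1)$. You instead write $f=d\cdot\1_{[0,m)}\ast\mu$ with $\mu=\sum_{j=0}^{n'-1}\delta_{jd}$ directly on $\R$ and verify both tiling identities by hand, with the $n\Z$-condition reduced to the bijection $\{0,\dots,n'-1\}\times\Z\to d\Z$, $(j,k)\mapsto d(j+kn')$, i.e.\ the same coset decomposition $\Z=\bigsqcup_{j=0}^{n'-1}(j+n'\Z)$ that drives the paper's discrete reduction. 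Your argument is self-contained (no detour through Section~\ref{secE9}'s $\Z$-machinery), which is a genuine advantage if this theorem is wanted in isolation; the paper's route is longer but yields the discrete results \thmref{thmH3.1} and \thmref{thmH3.2}, which it needs anyway for the lower bound \thmref{thmA4.3}. One cosmetic remark: the two identities you verify actually hold for \emph{every} $x\in\R$, not merely almost every $x$, since a half-open interval of integer length $m'$ always contains exactly $m'$ integers; the ``a.e.''\ qualifier is harmless but unnecessary.
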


It follows  that $n + m - \gcd(n,m)$ is 
the least possible
value of $\mes(\supp f)$
 among all 
 measurable  functions $f$ satisfying \eqref{eqA2.7.1}
with a nonzero tiling level vector $(p,q)$.
In particular, the convolution $\1_{[0, n)} \ast \1_{[0,m)}$
(whose support is an interval of length $n+m$)
\emph{does not} attain the least possible
value of $\mes(\supp f)$.

The results obtained thus answer the questions above
in full generality.

\begin{remark}
We note that the  case where the
tiling levels $p,q$ are both zero
is trivial, since then the zero function $f$
satisfies \eqref{eqA2.1.1}. It is also easy to
construct examples where $\supp f$ has positive but
arbitrarily small measure. For example, let $h$
be any function with $\supp h = (0, \eps)$,
then the function
$f(x) = h(x) - h(x + \al) - h(x + \be) + h(x + \al + \be)$
satisfies \eqref{eqA2.1.1} with $p,q$ both zero and $\supp f$ has positive
measure not exceeding $4 \eps$.
\end{remark}

\subsection{}
The rest of the paper  is organized as follows. 

In \secref{sect:prelim} we give a short
 preliminary background and fix notation that will be 
used throughout the paper.

In \secref{secE4} we prove Theorems \ref{thmA3.1},
 \ref{thmA2.5} and \ref{thmA2.6}, that is, 
for any two rationally  independent  $\al, \be$ 
and for any tiling level vector $(p,q)$,
we construct a simultaneous tiling  \eqref{eqA2.1.1} 
  such that $\mes(\supp f)$ is minimal, 
or is arbitrarily close to the infimum.

In \secref{secY1} we prove that if  a measurable function $f$ 
satisfies the simultaneous tiling condition 
\eqref{eqA2.1.1}  with a tiling level vector $(p,q)$ 
that is not proportional to $(\be, \al)$,
 then $\supp f$  must be an unbounded set
 (\thmref{thmS5.1}).

In \secref{secY2}  we  solve a problem posed to us by Kolountzakis,
asking whether  
there exists a \emph{bounded} measurable function $f$ on $\R$
that tiles simultaneously with 
rationally  independent  $\al, \be$ and with
arbitrary tiling levels $p,q$.
We  prove that  the 
answer is affirmative, and moreover,  $f$ 
can be chosen \emph{continuous and vanishing at infinity}.

In \secref{secE6} we prove Theorems 
\ref{thmA2.3} and \ref{thmA2.4}
 that give sharp lower bounds for
  the measure  of $\supp f$, where $f$ is 
 any measurable function 
satisfying  the simultaneous tiling condition 
\eqref{eqA2.1.1} 
with rationally  independent  $\al, \be$.

In the last \secref{secE9}, we consider the case where   
the two numbers $\al, \be$ are linearly dependent over the rationals.
By rescaling we may assume that 
$\al, \be$ are two positive integers 
$n,m$. We prove Theorems 
\ref{thmA4.1}, \ref{thmA4.3} and \ref{thmA4.2}
using a reduction of
the simultaneous  tiling problem
from the real line $\R$ to the 
set of integers $\Z$.


\section{Preliminaries. Notation.}
\label{sect:prelim}

In this section we give a short
 preliminary background and fix notation that will be 
used throughout the paper.

If $\al$  is a positive real number, then
we use $\tal$ to denote the circle group $\R / \zal$.
We let $\pal$ denote the canonical projection map
$\R \to \tal$.
The Lebesgue measure on the group $\tal$ is normalized
such that $\mes(\tal) = \al$.

We use $m(E)$, or $\mes(E)$,
to denote the Lebesgue measure of  a set $E$
in either the real line $\R$ or the circle $\tal$.

If
  $\al,\be$ are
two positive real numbers, then they are
  said to be \emph{rationally independent}
if the condition  $n \al + m \be = 0$, 
$n,m \in \Z$, implies that
$n=m=0$. 
This is the case if and only if the ratio $\al / \be$
is an irrational number. 

By the classical
Kronecker's theorem, 
if  two positive real numbers $\al,\be$ are rationally independent
then the sequence $\{\pal(n \be)\}$, $n=1,2,3,\dots$, is dense in $\tal$.

Let  $f$ be a measurable function on $\R$, 
and suppose that the series 
 \begin{equation}
\label{eqPR1.1}
\sum_{k \in \Z}f(x-k\al)
\end{equation}
converges absolutely for every $x \in \R$.
Then the sum \eqref{eqPR1.1}
is an $\al$-periodic function of $x$, so it
can be viewed as a function 
on $\tal$. We  denote this function  by $\pal(f)$.
If the sum \eqref{eqPR1.1} converges
absolutely not everywhere but almost everywhere,
then the function $\pal(f)$ is defined
in a similar way  on a full measure subset of $\tal$.

We observe that the simultaneous tiling condition
\eqref{eqA2.1.1} can be equivalently stated 
as the requirement that 
  $\pal(f) = p$ a.e.\ on $\tal$,
and that $\pbe(f) = q$ a.e.\ on $\tbe$.

If  $f$  is in $L^1(\R)$,
then the sum \eqref{eqPR1.1} converges
absolutely almost everywhere, and moreover,
the function 
$\pal(f)$ is in $L^1(\tal)$ and satisfies
  $\int_{\tal} \pal(f) = \int_{\R} f$.

The set
$\supp f := \{x  : f(x) \neq 0\}$
will be called   the \emph{support}  of the function
 $f$. If  we have $\supp f \sbt \Om$ then
we will say that \emph{$f$ is supported on $\Om$}.

We observe that
if $\supp f$ is a set of finite measure in $\R$, 
then in the sum \eqref{eqPR1.1}
 there are
 only finitely many nonzero terms
for almost every $x \in \R$, 
which implies that the function $\pal(f)$ 
is well defined on a full measure subset of $\tal$.


\section{Incommensurable arithmetic progressions:
Constructing simultaneously tiling functions  with small support}
\label{secE4}

In this section we prove Theorems \ref{thmA3.1},
 \ref{thmA2.5} and \ref{thmA2.6}, that is, 
for any two rationally  independent  $\al, \be$ 
and for any tiling level vector $(p,q)$,
we construct a simultaneous tiling  \eqref{eqA2.1.1} 
  such that $\mes(\supp f)$ is minimal, 
or is arbitrarily close to the infimum.

Throughout this section we 
shall suppose that $\al, \be > 0$ are two fixed,
rationally  independent real numbers.

\subsection{}
It will be convenient to introduce
the following terminology:

\begin{definition}
By an \emph{elementary  set}  
(in either $\R$, $\tal$ or $\tbe$) 
 we   mean a set which
can be represented as the union of  finitely many
 disjoint closed intervals of finite length.
\end{definition}

We will use $\interior(U)$ to denote the interior of an elementary set $U$.

\begin{lem}
  \label{lemFL1.1}
Let $A$ be an elementary set in $\tal$.
Then given any nonempty open interval  $J \sbt \tbe$,
no matter how small, 
one can find an elementary set $U \sbt \R$ 
such that
\begin{enumerate-roman}
\item
  \label{lemFL1.1.1}
$\pal(U) = A$;

\item
  \label{lemFL1.1.2}
$\pal$ is one-to-one on $\interior (U)$;

\item
  \label{lemFL1.1.3}
$\pbe(U) \sbt J$.
\end{enumerate-roman}
Moreover, $U$ can be chosen inside the
half-line $(r, +\infty)$ for any given number $r$.
\end{lem}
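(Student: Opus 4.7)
The plan is to build $U$ as a finite disjoint union of translates (by integer multiples of $\al$) of short lifted subintervals of $A$, arranged so that each translate lies to the right of $r$ and maps under $\pbe$ into $J$. First I would refine $A$ by writing it as a disjoint union of closed subintervals $A_1,\dots,A_N$ of $\tal$, each of length strictly less than $\del:=\tfrac12\min(\al,|J|)$. Lift each $A_j$ to a closed interval $\tilde A_j=[a_j,b_j]\sbt[0,\al)$ with $\pal(\tilde A_j)=A_j$; since $b_j-a_j<\al$, the map $\pal$ is one-to-one on the open interval $(a_j,b_j)$.

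Next I would invoke Kronecker's theorem. Rational independence of $\al$ and $\be$ yields that for every $M\ge 1$ the tail sequence $\{\pbe(k\al)\}_{k\ge M}$ is dense in $\tbe$. For each $j$, let $J_j'\sbt J$ be the set of points $t\in J$ such that the short arc $t+\pbe([0,b_j-a_j])$ is entirely contained in $J$; since $b_j-a_j<|J|$, this $J_j'$ is a nonempty open subinterval of $\tbe$. By the density statement above I would pick $k_j\in\Z$ with $k_j\al>r$ and with $\pbe(a_j+k_j\al)\in J_j'$, and in addition arrange the integers $k_j$ to be pairwise distinct; both requirements are met simultaneously on any tail of $\Z$.

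Finally, I would set $U:=\bigcup_{j=1}^N(\tilde A_j+k_j\al)$. Because the $k_j$ are pairwise distinct, the lifted intervals $\tilde A_j+k_j\al$ lie in disjoint blocks $[k_j\al,(k_j+1)\al)$, so $U$ is an elementary set contained in $(r,+\infty)$. Property (i) follows from $\pal(\tilde A_j+k_j\al)=A_j$ together with $A=\bigcup A_j$; property (iii) follows from the definition of $J_j'$; and property (ii) holds because the open intervals $(a_j+k_j\al,b_j+k_j\al)$ project onto the pairwise disjoint interiors of the $A_j$'s in $\tal$. The only delicate point in the argument is securing $k_j\al>r$ and $\pbe(a_j+k_j\al)\in J_j'$ at the same time, and this is precisely what Kronecker's theorem applied to a tail of $\{\pbe(k\al)\}$ supplies.
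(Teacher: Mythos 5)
Your argument is correct and follows essentially the same route as the paper's proof: decompose $A$ into closed subintervals of length less than $\min(\al,|J|)$ with disjoint interiors, lift each to a short interval in $\R$, and translate by suitable integer multiples of $\al$, using Kronecker's theorem to push the $\pbe$-images into $J$ while simultaneously staying in $(r,+\infty)$ and keeping the lifted intervals pairwise disjoint. Two cosmetic slips worth noting: the $A_j$ cannot be taken pairwise disjoint (only with pairwise disjoint interiors, which is what you actually use later), and an arc $A_j$ that straddles the point $0\equiv\al$ in $\tal$ cannot be lifted inside $[0,\al)$ — but since each lift has length $<\al$, choosing the integers $k_j$ sufficiently spread out still gives pairwise disjoint translates, so the argument goes through unchanged.
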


\begin{proof}
We choose $\delta>0$ smaller than
both the length of $J$ and $\al$,
and we decompose the elementary set $A$ as a union
$A = A_1 \cup \dots \cup A_n$,
where each $A_j$ is a closed interval in $\tal$
of length smaller than $\delta$,
and $A_1, \dots, A_n$ have disjoint
interiors. Let $U_j$ be a closed interval in $\R$
such that $A_j$ is a one-to-one image of $U_j$
under $\pal$.
By translating the sets $U_j$ by appropriate
integer multiples of $\al$ we can ensure that
$\pbe(U_j) \sbt J$ (due to 
Kronecker's theorem,  since
$\al, \be$ are rationally  independent),
 and that the sets
$U_1, \dots, U_n$ are pairwise disjoint and all
of them are contained
in a given half-line $(r, +\infty)$.  Then 
the set 
$U := U_1 \cup \dots \cup U_n$ 
is an elementary 
set contained in 
$(r, +\infty)$ and satisfying the properties 
  \ref{lemFL1.1.1}, \ref{lemFL1.1.2} and \ref{lemFL1.1.3}
above.
\end{proof}

\begin{lem}
  \label{lemFL4.1}
Let $A \sbt \tal$ be an elementary set,
and   $\varphi$ be a measurable function on $A$.
Given any nonempty open interval  $J \sbt \tbe$,
one can find an elementary set $U \sbt \R$ 
and a measurable function $f$ on $\R$,
such that
\begin{enumerate-roman}

\item
  \label{lemFL4.1.1}
$\pal(U) = A$;

\item
  \label{lemFL4.1.2}
$\pbe(U) \sbt J$;

\item
  \label{lemFL4.1.3}
$m(U) = m(A)$;

\item
  \label{lemFL4.1.4}
$f$ is supported on $U$;

\item
  \label{lemFL4.1.5}
$\pal(f) = \varphi$ a.e.\ on $A$.

\end{enumerate-roman}
Moreover, $U$ can be chosen inside the
half-line $(r, +\infty)$ for any given number $r$.
\end{lem}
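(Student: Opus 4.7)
The plan is to apply \lemref{lemFL1.1} to the same data $A$ and $J$ (and, if needed, the same lower bound $r$) in order to produce an elementary set $U \sbt \R$ satisfying clauses \ref{lemFL4.1.1} and \ref{lemFL4.1.2} as well as the extra one-to-oneness of $\pal$ on $\interior(U)$ afforded by that lemma. Clause \ref{lemFL4.1.3} is then automatic: since $\bd{U}$ is a finite union of points and hence null, and since $\pal$ restricted to $\interior(U)$ is a measure-preserving local isometry (hence a bijection onto its image that differs from $A$ by a null set), we obtain
\[
m(U) = m(\interior(U)) = m\bigl(\pal(\interior(U))\bigr) = m(A).
\]

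The next step is to transport $\varphi$ back to $U$ through $\pal$. I would define
\[
f(x) := \begin{cases} \varphi(\pal(x)) & \text{if } x \in \interior(U), \\ 0 & \text{otherwise.} \end{cases}
\]
Then clause \ref{lemFL4.1.4} holds by construction. Measurability of $f$ follows from the measurability of $\varphi$ together with the fact that $\pal$, restricted to each of the finitely many closed intervals whose disjoint union is $U$, is an isometry onto a closed interval of $\tal$; so on each such interval $f$ is a translate of $\varphi$ composed with a piece of the inverse of $\pal$.

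For clause \ref{lemFL4.1.5}, I would fix any $y \in A$ lying neither on $\bd{A}$ nor in $\pal(\bd{U})$, which excludes only a null set. By the one-to-oneness of $\pal$ on $\interior(U)$, there is exactly one point $x_y \in \interior(U)$ with $\pal(x_y) = y$. For any $x_0 \in \R$ with $\pal(x_0) = y$, the coset $x_0 + \zal$ meets $\interior(U)$ only at $x_y$ and meets $\bd{U}$ in at most finitely many points where $f$ vanishes, so
\[
\sum_{k \in \Z} f(x_0 - k\al) = f(x_y) = \varphi(y),
\]
which is exactly $\pal(f)(y) = \varphi(y)$ a.e.\ on $A$. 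The half-line clause is inherited directly from the corresponding clause in \lemref{lemFL1.1}. No real obstacle is present here; the only subtle point is handling $\bd{U}$, and this is taken care of by noting it is a null set that carries no mass in the tiling sum.
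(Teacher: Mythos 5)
Your proposal is correct and follows the same route as the paper: apply \lemref{lemFL1.1} to get $U$ with $\pal$ one-to-one on $\interior(U)$, set $f := \varphi\circ\pal$ on $\interior(U)$ and zero elsewhere, and read off all five properties. You supply a bit more detail than the paper on why $m(U)=m(A)$ and why the tiling sum degenerates to a single term, but the underlying argument is identical.
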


\begin{proof}
Use \lemref{lemFL1.1} to
find an elementary set  $U \sbt \R$ such that
$\pal(U) = A$, 
$\pal$ is one-to-one on $\interior (U)$,
and 
$\pbe(U) \sbt J$.
Notice that the first two properties
imply that $m(U) =m(A)$. Recall also
that \lemref{lemFL1.1} allows us
to choose the set $U$ inside any given
half-line $(r, +\infty)$.
We define a function
$f$ on $\R$ by
$f(x) := \varphi(\pal(x))$ 
for $x \in \interior (U)$,
and $f(x)=0$ outside of 
$\interior (U)$. 
Then $f$ is a measurable function  
 supported on $U$. Since
$\pal$ is one-to-one on $\interior (U)$,
we have 
$\pal(f) = \varphi$ on the set $\pal(\interior(U))$,
a full measure subset of $A$.
The properties \ref{lemFL4.1.1}--\ref{lemFL4.1.5}
are thus satisfied and the claim is proved.
\end{proof}

\subsection{}
The next lemma incorporates a central idea of our
tiling construction technique. Roughly speaking, the
lemma  asserts that one can find a function $f$ on $\R$
with prescribed projections $\pal(f)$ and $\pbe(f)$, and
that,  moreover,
$\mes(\supp f)$ need never exceed  the total
measure of the supports of the projections.

\begin{lem}
  \label{lemFL1.2}
Suppose that we are given 
two elementary sets
$A \sbt \tal$, $B \sbt \tbe$, both of positive measure,
 as well as
two measurable functions $\varphi$ on $A$, and $\psi$ on $B$.
Then there is a closed 
set $\Om \sbt \R$ (a union of countably many
disjoint closed  intervals accumulating
at $+\infty$)
and a measurable function $f$ supported on $\Om$,
such that
\begin{enumerate-roman}
\item
  \label{lemFL1.2.3}
$m(\Om) = m(A) + m(B)$
(in particular,
$\Om$ has finite measure);

\item
  \label{lemFL1.2.1}
$\pal(\Om) = A$,  $\pal(f) = \varphi$ a.e.\ on $A$;

\item
  \label{lemFL1.2.2}
$\pbe(\Om) = B$,  $\pbe(f) = \psi$ a.e.\ on $B$.
\end{enumerate-roman}
Moreover, $\Om$ can be chosen inside the
half-line $(r, +\infty)$ for any given number $r$.
\end{lem}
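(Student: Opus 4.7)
The plan is to build $\Om$ and $f$ iteratively, alternating applications of \lemref{lemFL4.1} between the two circles. First choose a short closed interval $J_1$ inside $B\sbt\tbe$ and apply \lemref{lemFL4.1} with data $(A,\varphi)$ and target $J_1$, obtaining $U_1$ and $f_1$ with $\pal(U_1)=A$, $\pbe(U_1)\sbt J_1$, $m(U_1)=m(A)$ and $\pal(f_1)=\varphi$ a.e.\ on $A$; set $\psi_1:=\pbe(f_1)$, supported on $J_1\sbt B$. Then swap the roles of $\al,\be$ and apply \lemref{lemFL4.1} with data $(B,\psi-\psi_1\1_{J_1})$ and a short target interval $J_2$ inside $A\sbt\tal$, obtaining $U_2$ and $f_2$ with $\pbe(U_2)=B$, $\pal(U_2)\sbt J_2$, $m(U_2)=m(B)$ and $\pbe(f_2)=\psi-\psi_1\1_{J_1}$. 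By the last clause of \lemref{lemFL4.1} we place $U_2$ to the right of $U_1$, so the two pieces are disjoint and contribute total measure $m(A)+m(B)$. After these two stages $\pbe(f_1+f_2)=\psi$ on $B$, while $\pal(f_1+f_2)=\varphi$ on $A\setminus J_2$ but differs from $\varphi$ on $J_2$ by a residual error $g_3:=\pal(f_2)|_{J_2}$ supported in $J_2\sbt A$.

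Subsequent steps repair such residual errors iteratively. To fix $g_3$, apply \lemref{lemFL4.1} with data $(\pal(U_2),-g_3)$ and a short target $J_3\sbt B$; this yields $U_3$ of measure $m(\pal(U_2))\le m(J_2)$ and a new residual $\be$-error on $J_3$. Continuing alternately and choosing the $J_k$ of geometrically decreasing length makes $\sum_{k\ge 3}m(U_k)$ finite and arbitrarily small. Each stage's support is placed still further to the right so that the $U_k$ form a countable family of pairwise disjoint closed intervals accumulating at $+\infty$, and the inclusions $\pal(\Om)\sbt A$ and $\pbe(\Om)\sbt B$ are preserved because each $J_k$ sits inside $A$ or inside $B$; the reverse inclusions hold since $U_1$ projects onto $A$ under $\pal$ and $U_2$ projects onto $B$ under $\pbe$. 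The series $f=\sum_k f_k$ converges absolutely a.e.\ because at each point of $\Om$ only one term is nonzero.

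The chief difficulty is forcing $m(\Om)=m(A)+m(B)$ \emph{exactly}, rather than merely up to an arbitrarily small $\eps$: naively each correction stage adds overhead beyond $m(A)+m(B)$. This is overcome by arranging the iteration so that the total overhead $\sum_{k\ge 3}m(U_k)$ can be made less than any prescribed $\eps>0$, and then either taking a limit of such configurations as $\eps\to0$ via a compactness/extraction argument, or reorganizing the construction so that the correction pieces are absorbed by sharing $\al$- or $\be$-fibers with the pieces already built. The underlying flexibility needed throughout---translating lifted pieces of $A$ or $B$ into any prescribed nonempty open subarc of the opposite circle---is supplied by Kronecker's theorem, which applies since $\al$ and $\be$ are rationally independent.
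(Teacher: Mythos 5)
Your first two steps already use up the entire measure budget: after constructing $U_1$ (with $\pal(U_1)=A$, $m(U_1)=m(A)$) and $U_2$ (with $\pbe(U_2)=B$, $m(U_2)=m(B)$), you have a set of measure exactly $m(A)+m(B)$ but a residual $\al$-error on $J_2\sbt A$. Every further correction piece $U_3,U_4,\dots$ necessarily adds \emph{strictly positive} extra measure, and neither of the two fixes you sketch actually resolves this. The compactness/extraction idea has no visible topology to work in: the sets $\Om_\eps$ are unbounded, located in different half-lines, and there is no reason a limit set would again satisfy the exact projection constraints $\pal(\Om)=A$, $\pbe(\Om)=B$, let alone inherit a limiting weight function $f$ with the right fiber sums. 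The ``absorb by sharing fibers'' idea contradicts how the measure count works: $m(U_1)=m(A)$ is achieved precisely because $\pal$ is one-to-one on $\interior(U_1)$, so if a correction piece $U_k$ (disjoint from $U_1$) projects under $\pal$ onto a subset of $\pal(U_1)=A$, then $\pal$ has multiplicity $\ge 2$ there and the total measure strictly exceeds $m(A)+m(B)$. In short, you correctly flagged the crux of the lemma but left it unproved.

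The paper avoids this entirely by a different decomposition. It does not build a piece projecting onto all of $A$ in one shot; instead it decomposes $A=\bigcup_k A_k$ and $B=\bigcup_k B_k$ as countable unions of elementary sets with disjoint interiors, and interleaves: the piece $U_n$ projects onto only $A_n$ and carries the correction needed there, while the accumulated $\be$-error from $U_1,\dots,U_n$ is confined to $B\setminus\bigcup_{k<n}B_k$, i.e.\ territory not yet processed; symmetrically for $V_n$ and $B_n$. Because every correction is made \emph{inside the still-unprocessed part} of the opposite set, each piece $U_n$ or $V_n$ contributes exactly $m(A_n)$ or $m(B_n)$, and the total measure is exactly $m(A)+m(B)$ with no overshoot and no passage to a limit. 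To repair your argument you would need to switch to this (or an equivalent) forward-pushing bookkeeping; the post-hoc compactness or fiber-sharing ideas do not close the gap.
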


\begin{proof}
We choose an arbitrary decomposition of the set $A$ as a union
$A = \bigcup_{k=1}^{\infty} A_k$,
where each $A_k \sbt \tal$ is an elementary set 
and the sets $A_1,A_2, \dots$ have nonempty and disjoint
interiors. We do the same also for the set $B$, that is,
we let 
$B = \bigcup_{k=1}^{\infty} B_k$,
where the $B_k$ are elementary sets in $\tbe$
with nonempty, disjoint
interiors. 

Now, we apply \lemref{lemFL4.1}
to the elementary set $A_1$, the function $\varphi$,
 and an arbitrary nonempty
open interval $J \sbt B$. We obtain from the lemma
an elementary set $U_1 \sbt \R$ and a measurable
 function $g_1$ on $\R$, satisfying the conditions
$\pal(U_1) = A_1$,
$\pbe(U_1) \sbt B$,
$m(U_1) = m(A_1)$,
the function  $g_1$ is supported on $U_1$,  and 
$\pal(g_1) = \varphi$ a.e.\ on $A_1$.

Next, we apply \lemref{lemFL4.1} again but
with the roles of
$\al,\be$ interchanged, 
to the elementary set $B_1$, the function $\psi - \pbe(g_1)$,
 and an arbitrary nonempty
open interval $J \sbt A \setminus A_1$. The lemma yields
an elementary set $V_1 \sbt \R$  and a measurable
 function $h_1$ on $\R$,
such that
$\pbe(V_1) = B_1$, 
 $\pal(V_1) \sbt A \setminus A_1$,
$m(V_1) = m(B_1)$,
the function  $h_1$ is supported on $V_1$,  and 
$\pbe(h_1) = \psi - \pbe(g_1)$ a.e.\ on $B_1$.

We continue the construction in a similar fashion. 
Suppose that we have already constructed the 
sets $U_k, V_k$ and the functions $g_k, h_k$ for
 $1 \leq k \leq n-1$.
Using \lemref{lemFL4.1} we find an elementary 
set $U_n \sbt \R$  and a measurable
 function $g_n$ on $\R$, such that
$\pal(U_n) = A_n$,
 $\pbe(U_n) \sbt B \setminus \bigcup_{k=1}^{n-1} B_k$,
$m(U_n) = m(A_n)$,
  $g_n$ is supported on $U_n$,  and 
 \begin{equation}
\label{eqFL2.1}
\pal(g_n) = \varphi - \sum_{k=1}^{n-1} \pal(h_k)
\quad \text{a.e.\ on $A_n$.}
\end{equation}
Then, we use again 
\lemref{lemFL4.1} to find an elementary 
set $V_n \sbt \R$   and a measurable
 function $h_n$ on $\R$, such that
$\pbe(V_n) = B_n$, 
$\pal(V_n) \sbt A \setminus \bigcup_{k=1}^{n} A_k$,
$m(V_n) = m(B_n)$,
  $h_n$ is supported on $V_n$,  and 
 \begin{equation}
\label{eqFL2.2}
\pbe(h_n) =  \psi - \sum_{k=1}^{n} \pbe(g_k)
\quad \text{a.e.\ on $B_n$.}
\end{equation}

We may assume that the  
sets $U_1, V_1, U_2, V_2, \dots$  
are pairwise disjoint, 
that all of them lie  inside a given
half-line $(r, +\infty)$, and that they 
accumulate at $+ \infty$. Indeed,  \lemref{lemFL4.1} 
allows us to choose the sets 
such that they satisfy these properties.
(In fact, one can check  that by their
construction the sets necessarily have
 disjoint interiors.)

 Finally, we define
 \begin{equation}
\label{eqFL2.5}
\Om := \bigcup_{n=1}^{\infty} (U_n \cup V_n),
\quad
f  := \sum_{n=1}^{\infty} (g_n + h_n).
\end{equation}
The sum on the right hand side of \eqref{eqFL2.5}
is well defined as the
terms in the series have disjoint supports.
 We will show that the properties
\ref{lemFL1.2.3}, \ref{lemFL1.2.1}
and \ref{lemFL1.2.2}
are satisfied.

We begin by verifying that
\ref{lemFL1.2.3} holds. Indeed, 
$m(U_n) = m(A_n)$, $m(V_n) = m(B_n)$,
and the sets 
$U_1, V_1, U_2, V_2, \dots$  
are  disjoint. It follows that
 \begin{equation}
\label{eqFL2.19}
m(\Om) = \sum_{n=1}^{\infty} (m(U_n) + m(V_n))
 = \sum_{n=1}^{\infty} (m(A_n) + m(B_n))
 = m(A) + m(B).
\end{equation}

Next we verify that
 \ref{lemFL1.2.1} is satisfied. Indeed,
we have $\pal(U_n) = A_n$ and $\pal(V_n) \sbt A$
for every $n$,  hence $\pal(\Om) = A$.
We must show that also
$\pal(f) = \varphi$ a.e.\ on $A$. It would suffice to verify
that this holds  on each $A_n$. Notice that
$\pal(\interior (U_k))$ is disjoint from $A_n$
for $k \neq n$, and $\pal(V_k)$ 
 is disjoint from $A_n$ for $k \geq n$.
Hence using \eqref{eqFL2.1} this implies that
 \begin{equation}
\label{eqFL2.20}
\pal(f) = \pal(g_n) + \sum_{k=1}^{n-1} \pal(h_k) = \varphi
\quad \text{a.e.\ on $A_n$.}
\end{equation}

In a similar way we can
show that \ref{lemFL1.2.2} holds as well. 
We have $\pbe(V_n) = B_n$ and $\pbe(U_n) \sbt B$
for every $n$, hence $\pbe(\Om) = B$.
To see that 
$\pbe(f) = \psi$ a.e.\ on $B$, we verify
that this is the case  on each $B_n$. But 
$\pbe(\interior (V_k))$ is disjoint from $B_n$
for $k \neq n$, and $\pbe(U_k)$ 
 is disjoint from $B_n$ for $k \geq n+1$.
Hence  \eqref{eqFL2.2} implies that
 \begin{equation}
\label{eqFL2.21}
\pbe(f) = \pbe(h_n) + \sum_{k=1}^{n} \pbe(g_k) = \psi
\quad \text{a.e.\ on $B_n$.}
\end{equation}
Thus all the properties
\ref{lemFL1.2.3}, \ref{lemFL1.2.1}
and \ref{lemFL1.2.2}
are satisfied and
 \lemref{lemFL1.2} is proved.
\end{proof}

\subsection{}
We can now use 
 \lemref{lemFL1.2}
in order to prove 
\thmref{thmA3.1} 
and \thmref{thmA2.6}.

\begin{proof}[Proof of \thmref{thmA3.1}]
Let $p,q$ be 
any two complex numbers.
Apply \lemref{lemFL1.2}
to the sets $A = \tal$, $B=\tbe$, and
to the constant functions
$\varphi = p$, $\psi = q$. The  lemma yields
a measurable function $f$ on $\R$,
 supported on a set
$\Om \sbt \R$  of  measure $\al + \be$,
and such that 
$\pal(f) = p$ a.e.\ on $\tal$,
while
$\pbe(f) = q$ a.e.\ on $\tbe$,
that is, $f$
satisfies the tiling condition
\eqref{eqA2.1.1}.
The theorem is thus proved.
\end{proof}

\begin{proof}[Proof of \thmref{thmA2.6}]
Let $(p,q)=(1,0)$. Given  $\eps>0$ we
   apply \lemref{lemFL1.2}
to the sets $A = \tal$ and $B=[0, \eps] \sbt \tbe$, and
to the functions
$\varphi = 1$, $\psi = 0$. The lemma yields
a set $\Om \sbt \R$ satisfying
$m(\Om) = \al + \eps$, $\pbe(\Om) = B$,
as well as a measurable function $f$ supported on $\Om$
and such that
$\pal(f) = 1$ a.e.\ on $\tal$,
and $\pbe(f) = 0$ a.e.\ on $B$. Notice though
that the condition
$\pbe(\Om) = B$ ensures that 
$\pbe(f) = 0$ a.e.\ also on $\tbe \setminus B$.
Hence the tiling condition
\eqref{eqA2.1.1} is satisfied.
This proves one part of the theorem.

To prove the converse part, we suppose that
$f$ is a measurable function on $\R$ satisfying
\eqref{eqA2.1.1} 
with $(p,q)=(1,0)$, that is,  
$\pal(f) = 1$ a.e.\ on $\tal$
and $\pbe(f) = 0$ a.e.\ on $\tbe$.
 It follows from the first assumption 
that the set $\Om  := \supp f$ has
measure at least $\al$, since
 $\pal(\Om)$ is a set of full
measure in $\tal$. 
We  must show that actually
$m(\Om) > \al$.
Suppose to the contrary that $m(\Om) = \al$.
Then $\pal(\Om)$ cannot be a set of full
measure in $\tal$ unless $\pal$ is one-to-one
on a full measure subset of $\Om$. 
But then the assumption that
$\pal(f) = 1$ a.e.\ on $\tal$
implies that $f = 1$ a.e.\ on its support $\Om$, 
which in turn contradicts  our 
second assumption that
$\pbe(f) = 0$ a.e.\ on $\tbe$.
Hence we must have
$m(\Om) > \al$, and so 
the second part of the theorem is also proved. 
\end{proof}

\subsection{}
\label{subsecDB1}
Next we turn to prove 
\thmref{thmA2.5}.
The proof will require the following notion:

\begin{definition}
\label{defDBA}
 An $n \times m$ matrix $M = (c_{ij})$ is called
a \emph{doubly stochastic array} 
(with uniform marginals)   if
the entries $c_{ij}$ are nonnegative, and 
\begin{equation}
\label{eqDS1.1}
\sum_{j=1}^{m} c_{ij} = m, \quad 1 \leq i \leq n,
\end{equation}
\begin{equation}
\label{eqDS1.2}
\sum_{i=1}^{n} c_{ij} = n, \quad 1 \leq j \leq m,
\end{equation}
that is, the sum of the entries at each row is $m$
and at each   column  is $n$.
\end{definition}

By the \emph{support} of the
matrix $M = (c_{ij})$ we refer to the set
\[
\supp M = \{(i,j) : c_{ij} \neq 0\}.
\]

In \cite[Question 7]{KP22} the authors posed the following question,
which arose in connection with the simultaneous tiling problem
in  finite abelian groups: what is
the least possible size of the support of a
  doubly stochastic  $n \times m$ array?

This problem was solved recently in  \cite{Lou23} and 
independently  in \cite{EL22}.

\begin{thm}[{\cite{Lou23}, \cite{EL22}}]
\label{thmC1.6}
For all $n,m$ the minimal size of the
support of an $n \times m$
doubly stochastic array is
equal to $n + m - \gcd(n,m)$.
\end{thm}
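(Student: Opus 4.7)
My plan is to prove the matching upper and lower bounds separately. Set $d := \gcd(n,m)$ and factor $n = da$, $m = db$ with $\gcd(a,b) = 1$. For the upper bound, I would construct an explicit $n \times m$ doubly stochastic array of support size $n+m-d$. The building block is a ``staircase'' $a \times b$ matrix with row sums $b$, column sums $a$, and exactly $a+b-1$ positive entries lying along a monotone lattice path from $(1,1)$ to $(a,b)$: such a matrix is produced by the greedy northwest-corner rule of transportation theory, and coprimality of $a,b$ ensures that a row and a column are exhausted simultaneously only at the final step. Multiplying this block by $d$ rescales its marginals to $m$ and $n$, and placing $d$ disjoint scaled copies along the diagonal of the $n \times m$ grid produces a doubly stochastic array with $d(a+b-1) = n+m-d$ nonzero entries.

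For the lower bound I would reformulate the problem in graph-theoretic terms. Given a doubly stochastic array $M = (c_{ij})$, form the bipartite support graph $G$ on the vertex set $\{1,\ldots,n\} \sqcup \{1,\ldots,m\}$ whose edge set is $\supp M$; let $s = |\supp M|$ and let $c$ be the number of connected components of $G$. The elementary inequality $s \geq (n+m) - c$, valid for any graph, reduces the task to proving $c \leq d$. To establish this, I would fix one connected component, with row vertices $R \subseteq \{1,\ldots,n\}$ and column vertices $C \subseteq \{1,\ldots,m\}$. Since no edge of $G$ leaves the component, $c_{ij} = 0$ whenever exactly one of $i \in R$, $j \in C$ holds. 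Summing the entries of the block $R \times C$ first by rows and then by columns gives $m|R| = n|C|$, hence $|R|/|C| = a/b$; coprimality of $a, b$ then forces $a \mid |R|$, so $|R| \geq a$ in every component. Because the $R$-sets partition $\{1,\ldots,n\}$, the total number of components is at most $n/a = d$, as required.

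The substantive step is the bound $c \leq d$: this is where the \emph{uniform} marginals (rather than merely nonnegativity and any fixed row/column-sum constraints) are essential, via the identity $m|R| = n|C|$ on every component. Once this is proved, combining with $s \geq (n+m) - c$ yields the lower bound $s \geq n+m-d$, which matches the explicit construction. The staircase construction and the graph inequality $s \geq (n+m)-c$ are both standard; the work is concentrated in the component analysis.
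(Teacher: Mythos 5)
Your proof is correct. Note, however, that the paper does not give its own proof of this theorem: it is stated with citations to \cite{Lou23} and \cite{EL22}, so there is no in-paper argument to compare against. Your two-sided strategy (northwest-corner ``staircase'' block, tiled $d$ times along the diagonal, for the upper bound; bipartite support graph plus component-wise double counting for the lower bound) is the natural one and appears to align with the cited literature. A feature worth highlighting is that your lower-bound argument nowhere uses nonnegativity of the entries: the facts that every row and column must contain a nonzero entry, the identity $m|R| = n|C|$ on each connected component, and the edge-count bound $s \geq (n+m) - c$ depend only on the support pattern and on double counting, so the same argument establishes the strengthened statement that the paper records separately as Theorem~\ref{thmD1.1} (the complex-valued version, \cite[Theorem 3.1]{EL22}) and actually uses in Section~\ref{secE9}. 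One small point to make explicit in the upper bound: the northwest-corner rule produces exactly $a+b-1$ nonzero entries because the cumulative row totals $\{0, b, 2b, \dots, ab\}$ and cumulative column totals $\{0, a, 2a, \dots, ab\}$ meet only at $0$ and $ab$ when $\gcd(a,b)=1$, so a row and a column are never exhausted simultaneously except at the final cell; this is precisely where coprimality is used and deserves a sentence.
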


In particular, there exists an $n \times m$
doubly stochastic array whose support size 
is as small as  $n + m - \gcd(n,m)$.
We will use this  fact in the  proof 
of  \lemref{lemFN1.2} below.

\subsection{}

\begin{lem}
  \label{lemFN1.1}
Let  $p,q$ be two positive integers, and
$0 < \gam <  \min \{\al q^{-1}, \be p^{-1} \}$.
Then there is a system 
$\{L_{ij}\}$,
 $ 1 \leq i \leq p$, $ 1 \leq j \leq q$,
of disjoint closed intervals in $\R$, 
with the following properties:
\begin{enumerate-roman}
\item
  \label{lemFN1.1.1}
each interval $L_{ij}$ has length $\gam$;

\item
  \label{lemFN1.1.2}
$\pal(L_{ij})$ is an interval $I_j \sbt \tal$ that does  not  depend on $i$;

\item
  \label{lemFN1.1.3}
$\pbe(L_{ij})$ is an interval $J_i  \sbt \tbe$  that does  not  depend on $j$;

\item
  \label{lemFN1.1.4}
$I_1, \dots, I_q$ are disjoint closed  intervals in $\tal$;

\item
  \label{lemFN1.1.5}
$J_1, \dots, J_p$ are disjoint closed intervals in $\tbe$.

\end{enumerate-roman}
\end{lem}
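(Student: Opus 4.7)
The plan is to build each $L_{ij}$ as a translate of the single seed interval $[0,\gamma]$ by a lattice-combination of the form $n_i\alpha + m_j\beta$, with $m_j$ depending only on the column index and $n_i$ depending only on the row index. The projection to $\tal$ will then automatically depend only on $j$, and the projection to $\tbe$ only on $i$.

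First, I choose the integers $m_j$. Since $\gamma < \alpha/q$, one can partition $\tal$ into $q$ consecutive closed arcs $A_1,\dots,A_q$, each of length $\alpha/q$, and pick inside each $A_j$ a smaller arc $A_j' \sbt A_j$ of length $\alpha/q - \gamma > 0$, chosen so that any arc of length $\gamma$ starting in $A_j'$ stays inside $A_j$. Take $m_1 = 0$, so that $\pal(m_1\beta) = 0$ can be assumed to lie in $A_1'$ (by translating the partition slightly if needed, which is harmless). For $j = 2,\dots,q$, Kronecker's theorem (which applies because $\al,\be$ are rationally independent) says that $\{\pal(m\beta) : m \in \Z\}$ is dense in $\tal$, so one can choose $m_j \in \Z$ with $u_j := \pal(m_j\beta) \in A_j'$. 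The arcs $I_j := [u_j, u_j + \gamma] \sbt \tal$ are then disjoint closed intervals of length $\gamma$, each lying inside its own $A_j$.

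Second, symmetrically and completely independently, $\gamma < \beta/p$ together with density of $\{\pbe(n\alpha) : n \in \Z\}$ in $\tbe$ lets me choose integers $n_1=0, n_2, \dots, n_p$ such that the points $v_i := \pbe(n_i\alpha)$ give rise to disjoint closed arcs $J_i := [v_i, v_i + \gamma] \sbt \tbe$ of length $\gamma$, each confined to its own slice of a partition of $\tbe$ into $p$ arcs of length $\beta/p$.

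Third, I set
\[
L_{ij} := [\,n_i\alpha + m_j\beta,\; n_i\alpha + m_j\beta + \gamma\,] \sbt \R.
\]
Then each $L_{ij}$ is a closed interval of length $\gamma$, which gives \ref{lemFN1.1.1}. Applying $\pal$ kills the $n_i\alpha$ summand, so $\pal(L_{ij})$ is the arc of length $\gamma$ starting at $u_j$, i.e.\ exactly $I_j$; this is independent of $i$ and confirms \ref{lemFN1.1.2}, while disjointness of the $I_j$ (established in step one) gives \ref{lemFN1.1.4}. The argument for \ref{lemFN1.1.3} and \ref{lemFN1.1.5} is identical under $\pbe$.

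Finally, for pairwise disjointness of the $L_{ij}$ in $\R$, suppose $(i,j)\neq(i',j')$ and $L_{ij}\cap L_{i'j'}\neq\emptyset$, with $z$ a common point. If $j\neq j'$, then $\pal(z)\in I_j\cap I_{j'}=\emptyset$, a contradiction; if instead $i\neq i'$, then $\pbe(z)\in J_i\cap J_{i'}=\emptyset$, again a contradiction. Hence the $L_{ij}$ are pairwise disjoint.

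The only place that requires any real care is the Kronecker step: one must pick $m_j$ and $n_i$ not merely so that the target points $u_j$, $v_i$ are distinct, but so that the resulting $\gamma$-arcs sit inside disjoint sub-pieces of the circle (and in particular are non-wrapping closed intervals). The strict inequality $\gamma < \min\{\alpha/q,\beta/p\}$ is exactly what makes those sub-pieces have positive length, so density of the orbits gives the required selections. No other step presents a genuine obstacle.
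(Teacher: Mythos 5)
Your proposal is correct and takes essentially the same approach as the paper's proof: pick integers $m_j$ via Kronecker so that the arcs $I_j = \pal([0,\gam] + m_j\be)$ are disjoint in $\tal$, pick $n_i$ symmetrically so that the $J_i$ are disjoint in $\tbe$, set $L_{ij} = [0,\gam] + n_i\al + m_j\be$, and derive disjointness of the $L_{ij}$ by projecting a hypothetical common point to both circles. The only difference is that you spell out the Kronecker step more explicitly (via the sub-arcs $A_j'$), whereas the paper simply asserts the selection is possible because $q\gam<\al$; this is a matter of level of detail, not of method.
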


\begin{proof}
We choose integers $m_1, \dots, m_q$ such that
the intervals  $I_j := [m_j \be,  m_j \be + \gam]$,
$1 \leq j \leq q$, 
are disjoint in $\tal$. This is possible due to 
Kronecker's theorem,  since
$\al, \be$ are rationally  independent and  $q \gam < \al$.
Since we also have
 $p \gam < \be$, we can find
in a similar way integers $n_1, \dots, n_p$ such that
the intervals  $J_i := [n_i \al,  n_i \al + \gam]$,
$1 \leq i \leq p$, are disjoint in $\tbe$.
We then define the intervals  $L_{ij} \sbt \R$ by
 \begin{equation}
\label{eqFL3.1}
L_{ij} := [0,\gam] + n_i \al + m_j \be,
\quad  1 \leq i \leq p, \; 1 \leq j \leq q.
\end{equation}
Then each interval $L_{ij}$ has
length $\gam$, and we have
$\pal(L_{ij}) = I_j$, $\pbe(L_{ij}) = J_i$,
so all the required properties
\ref{lemFN1.1.1}--\ref{lemFN1.1.5}
are satisfied.

Lastly we show that the intervals $L_{ij}$ must be
disjoint. Indeed, suppose that two intervals
$L_{i_1,j_1}$ and $L_{i_2,j_2}$ 
have a point $x$ in common.
Then, on one hand,  from
 \ref{lemFN1.1.2} we obtain
$\pal(x) \in I_{j_1} \cap I_{j_2}$, 
which in turn using \ref{lemFN1.1.4}
implies that $j_1 = j_2$.
On the other hand, 
by  \ref{lemFN1.1.3} we also have
$\pbe(x) \in J_{i_1} \cap J_{i_2}$, 
and hence $i_1 = i_2$   which now follows from
\ref{lemFN1.1.5}. Hence the intervals 
$L_{i_1,j_1}$ and $L_{i_2,j_2}$ 
cannot intersect unless  $(i_1, j_1) = (i_2, j_2)$.
\end{proof}

\begin{lem}
  \label{lemFN1.2}
Let  $p,q$ be two positive coprime integers, and 
  $0 < \gam <  \min \{\al q^{-1}, \be p^{-1} \}$.
Then there is an elementary set
$\Om \sbt \R$ and
a  measurable function $f$ supported on $\Om$,
such that
\begin{enumerate-roman}
\item
  \label{lemFN1.2.0}
$m(\Om) =  (p+q-1) \gam$;

\item
  \label{lemFN1.2.1}
the set $A = \pal(\Om)$ in $\tal$ has measure $q\gam$;

\item
  \label{lemFN1.2.2}
the set $B = \pbe(\Om)$ in $\tbe$ has measure $p\gam$;

\item
  \label{lemFN1.2.3}
$\pal(f) = p$ a.e.\ on $A$;

\item
  \label{lemFN1.2.4}
$\pbe(f) = q$ a.e.\ on $B$.
\end{enumerate-roman}
\end{lem}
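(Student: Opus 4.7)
The plan is to combine the geometric grid construction of \lemref{lemFN1.1} with the combinatorial extremal result of \thmref{thmC1.6}. First I would apply \lemref{lemFN1.1} to obtain the system of $pq$ disjoint closed intervals $L_{ij} \sbt \R$, each of length $\gam$, with $\pal(L_{ij}) = I_j$ and $\pbe(L_{ij}) = J_i$, where $I_1, \dots, I_q$ are disjoint intervals in $\tal$ and $J_1, \dots, J_p$ are disjoint intervals in $\tbe$.

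Next, since $p$ and $q$ are coprime, \thmref{thmC1.6} guarantees a $p \times q$ doubly stochastic array $M = (c_{ij})$ with $|\supp M| = p + q - \gcd(p,q) = p+q-1$. I would then define
\[
\Om := \bigcup_{(i,j) \in \supp M} L_{ij}, \qquad f := \sum_{(i,j) \in \supp M} c_{ij} \1_{L_{ij}}.
\]
Since the intervals $L_{ij}$ are pairwise disjoint and each has length $\gam$, property \ref{lemFN1.2.0} follows immediately.

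For the projection identities, the key observation is that the row/column sum conditions on $M$ are calibrated precisely to match the projection values required: summing $f$ over a fiber of $\pal$ inside $I_j$ picks up $\sum_{i=1}^{p} c_{ij} = p$ (the column-sum condition from \defref{defDBA} for a $p \times q$ array), while a fiber of $\pbe$ inside $J_i$ collects $\sum_{j=1}^{q} c_{ij} = q$ (the row-sum condition). I would also observe that since the column sum $p$ is strictly positive, $\supp M$ meets every column, so $A = \pal(\Om) = \bigcup_{j=1}^{q} I_j$; by disjointness of the $I_j$'s this yields $m(A) = q\gam$, and the same argument using positivity of row sums gives $B = \bigcup_{i=1}^{p} J_i$ and $m(B) = p\gam$.

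There is no genuine obstacle beyond carefully matching conventions: the only thing to be cautious about is that \defref{defDBA} assigns the row sum the value $m$ and the column sum the value $n$, so applying it with $n = p$, $m = q$ gives the correct correspondence with the tiling levels $p$ and $q$ appearing on the $\al$- and $\be$-sides respectively. Once this bookkeeping is in place, properties \ref{lemFN1.2.1}--\ref{lemFN1.2.4} all follow from direct computation, and $\Om$ is a finite union of disjoint closed intervals, hence elementary.
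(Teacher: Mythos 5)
Your proposal matches the paper's proof essentially step for step: apply \lemref{lemFN1.1} to get the grid of intervals $L_{ij}$, take a minimal-support $p\times q$ doubly stochastic array from \thmref{thmC1.6}, set $f = c_{ij}$ on $L_{ij}$, and read off the projection identities from the row/column sums; you even handle the same subtle point (that $\pal(\Om)$ equals all of $\bigcup I_j$ rather than a proper subset) by the same observation that positivity of the column sums forces $\supp M$ to meet every column. Your bookkeeping note about the $n\leftrightarrow p$, $m\leftrightarrow q$ convention in \defref{defDBA} is correct and matches the paper's usage.
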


It is instructive to compare this result with
\lemref{lemFL1.2} above. Recall  that 
the sets $A, B$ in Lemma \ref{lemFL1.2} 
can be any two elementary subsets of
$\tal$ and $\tbe$ respectively, and that the projections
$\pal(f)$, $\pbe(f)$  can be any two
 measurable functions on $A$ and $B$
respectively, but the measure of the support
$\Om$ must in general
be as large as  the sum of $m(A)$ and $m(B)$.
To the contrary, in \lemref{lemFN1.2} 
we are able to reduce the measure of 
the support $\Om$
to be strictly smaller than the sum of $m(A)$ and $m(B)$.

\begin{proof}[Proof of  \lemref{lemFN1.2}]
Let $\{L_{ij}\}$,
 $ 1 \leq i \leq p$, $ 1 \leq j \leq q$,
be the system of disjoint closed intervals  
given by \lemref{lemFN1.1}.
We use \thmref{thmC1.6}  to find a
$p \times q$
doubly stochastic array $M = (c_{ij})$,
whose support is of  size $p+q-1$
(which is the smallest possible size as $p,q$ are coprime).
We define a function $f$ on $\R$ by 
$f(x) := c_{ij}$ for $x \in L_{ij}$, 
 $ 1 \leq i \leq p$, $ 1 \leq j \leq q$,
and  $f(x):=0$ if $x$ does not
lie in any one of the intervals $L_{ij}$.

Let $\Om$ be  the support of the function $f$.
Then $\Om$ is the union
of those intervals  $L_{ij}$ for which $ (i,j)\in \supp M$.
Since $|\supp M| = p+q-1$, and since the intervals $L_{ij}$ are disjoint
and have length $\gam$, it follows that 
 $m(\Om) =  (p+q-1) \gam$.
 
Recall that  
$\pal(L_{ij})$ is a closed interval $I_j \sbt \tal$ 
not  depending on $i$,
and  the intervals 
$I_1, \dots, I_q$  are disjoint. Let 
$A = I_1 \cup \dots \cup I_q$,
then  $A$  has measure $q\gam$.
We show that
$\pal(f) = p$ a.e.\ on $A$. It would suffice to verify
that this holds on each one of the intervals $I_j$. Indeed,
as  the sum of the entries of the matrix
$M$ at the $j$'th column  is $p$, we have
 \begin{equation}
\label{eqFN1.3.1}
\pal(f) =   \sum_{i=1}^{p} c_{ij} = p \quad \text{on $I_j$.}
 \end{equation}

Next, in a similar way, we let $B = J_1 \cup \dots \cup J_p$,
then   $B$  has measure $p\gam$. We show that 
$\pbe(f) = q$ a.e.\ on $B$, by checking
that this holds on each $J_i$. And indeed, 
this time due to
the sum of the entries of 
$M$ at the $i$'th row being $q$, we get
 \begin{equation}
\label{eqFN1.3.2}
\pbe(f) =   \sum_{j=1}^{q} c_{ij} = q \quad \text{on $J_i$.}
 \end{equation}

Finally, since $p$ is nonzero,
it follows from \eqref{eqFN1.3.1}
 that $\pal(f)$ does not vanish on $A$, 
hence $\pal(\Om)$ must cover $A$. But $\pal(\Om)$ is a subset
of $A$, so we get $\pal(\Om) = A$.
Similarly, also $q$ is nonzero,
so \eqref{eqFN1.3.2} implies that
$\pbe(\Om)$ covers $B$, and since
$\pbe(\Om)$ is also a subset of $B$
we conclude that $\pbe(\Om) = B$.
The lemma is thus proved.
\end{proof}

\subsection{}
Now we are able to prove \thmref{thmA2.5}
based on the results above.

\begin{proof}[Proof of \thmref{thmA2.5}]
Let  $p,q$ be two positive coprime integers, and denote
\begin{equation}
\label{eqS1.2.1}
\sigma := \min \Big\{\frac{\al}{q}, \frac{\be}{p} \Big\}.
\end{equation}
 Given $\eps>0$,
we choose a number $\gam$ such that
$\sig - \eps < \gam < \sig$
(we can assume that $\eps$ is smaller than $\sig$).
We then use \lemref{lemFN1.2} to  obtain
an elementary set
$\Om_1 \sbt \R$ of measure $(p+q-1) \gam$,
and a  measurable function $f_1$ supported on $\Om_1$,
such that
the elementary set $A_1 := \pal(\Om_1)$  has measure $q\gam$,
the elementary set $B_1 := \pbe(\Om_1)$ has measure $p\gam$,
and such that $\pal(f_1) = p$ a.e.\ on $A_1$,
while $\pbe(f_1) = q$ a.e.\ on $B_1$.

Next, we apply \lemref{lemFL1.2}
to the two elementary sets 
$A_2 := \tal \setminus \interior(A_1)$, 
 $B_2 :=\tbe \setminus \interior(B_1)$,
 and
to the constant functions
$\varphi = p$, $\psi = q$. 
The lemma allows us to find 
a  set $\Om_2 \sbt \R$
and a measurable function $f_2$ supported on $\Om_2$,
such that
$\pal(\Om_2) = A_2$,  $\pal(f_2) = p$ a.e.\ on $A_2$,
$\pbe(\Om_2) = B_2$,  $\pbe(f_2) = q$ a.e.\ on $B_2$,
and
\begin{equation}
\label{eqSL2.16}
m(\Om_2) = m(A_2) + m(B_2) = (\al - q\gam) + (\be - p\gam) = 
\al + \be - (p+q)\gam.
\end{equation}
The lemma also allows us to choose $\Om_2$ to be
disjoint from $\Om_1$.

We now define
 \begin{equation}
\label{eqSL2.5}
\Om := \Om_1 \cup \Om_2,
\quad
f  := f_1 + f_2.
\end{equation}
Then $f$ is supported by $\Om$. Since
$\Om_1$ and $\Om_2$ are disjoint, we have
 \begin{equation}
\label{eqSL2.11}
m(\Om)   = m(\Om_1) + m(\Om_2) = \al + \be - \gam.
\end{equation}
But recall that we have chosen $\gam$ such that $\gam > \sig - \eps$,
hence \eqref{eqSL2.11} yields that
$\mes(\supp f) < \al + \be - \sig + \eps$. That
is, the condition \eqref{eqA2.5.1} is satisfied.

 We must verify that $f$ satisfies the tiling condition
 \eqref{eqA2.1.1}. We first show that
  $\pal(f) = p$ a.e.\ on $\tal$.
Indeed, we have
$\pal(\Om_1) = A_1$,
$\pal(\Om_2) = A_2$,
where $A_1, A_2$ have disjoint interiors
and their union is the whole $\tal$. 
Moreover, 
$\pal(f) = \pal(f_1) = p$ a.e.\ on $A_1$, and
$\pal(f) = \pal(f_2) = p$ a.e.\ on $A_2$,
which proves the claim.

Finally we show that also
  $\pbe(f) = q$ a.e.\ on $\tbe$.
In a similar way, we have  
$\pbe(\Om_1) = B_1$,
$\pbe(\Om_2) = B_2$,
and $B_1, B_2$ have disjoint interiors
and their union is $\tbe$. 
As before, we have
$\pbe(f) = \pbe(f_1) = q$ a.e.\ on $B_1$, and
$\pbe(f) = \pbe(f_2) = q$ a.e.\ on $B_2$.
This shows that the tiling condition
 \eqref{eqA2.1.1} indeed holds 
and thus the theorem is proved.
\end{proof}

\subsection{Remarks}
1. Let us say that a measurable function
$f$ on $\R$ is \emph{piecewise constant}
if there is a strictly increasing real sequence $\{\lam_n\}$,
$n \in \Z$, with no finite accumulation points, such that
$f$ is constant a.e.\ on each one of the 
intervals $[\lam_n, \lam_{n+1})$
(note that these intervals constitute a partition of $\R$).
One can verify that
our proof of  Theorems
\ref{thmA3.1}, \ref{thmA2.5} and
\ref{thmA2.6} yields a  function
$f$  which is not only
measurable, but in fact is
 piecewise constant on $\R$.

2. Our construction method allows us to choose the function $f$
in Theorems \ref{thmA3.1}, \ref{thmA2.5} and
\ref{thmA2.6} to have ``dispersed'' support,
that is, $f$ can be supported on the union of 
(countably many) small
intervals that are located far apart from each other.


\section{Simultaneous tiling by functions of bounded support}
\label{secY1}

 \subsection{}
One can easily notice that our proof of Theorems \ref{thmA3.1},
 \ref{thmA2.5} and \ref{thmA2.6} above
   yields a function $f$ 
 whose support lies inside any given half-line $(r, +\infty)$,  
 so that $\supp f$ is bounded from below. 
 One may ask whether the function $f$ can be chosen such that
  the support is bounded from both above 
and below at the same time. 

The answer is obviously `yes' if we have 
$(p,q) = \lam (\be, \al)$ where
$\lam$ is a scalar, since in this case the 
 function $f = \lam \1_{[0, \al)} \ast \1_{[0,\be)}$
 satisfies the simultaneous tiling condition  \eqref{eqA2.1.1} and has bounded support.

To the contrary, if the tiling level vector
$(p,q)$ is not proportional to $(\be, \al)$,
then \thmref{thmS5.1} provides the question above
with a negative answer: \emph{$f$ cannot be supported on any bounded set}. 
This theorem will be proved in the present  section.

We note that our proof in fact does not use the assumption that 
 $\al, \be$ are rationally  independent. 
 However if $\al, \be$ are linearly dependent over the rationals,
then we know from \thmref{thmA4.1} that there do not exist any
simultaneous tilings   \eqref{eqA2.1.1} with a   level
vector $(p,q)$ that is not proportional to $(\be, \al)$, 
so  the result is vacuous in this case.

 \subsection{}
 We now turn to prove \thmref{thmS5.1}. 
 To this end, we shall use  a result due to Anosov \cite[Theorem 1]{Ano73} 
that we state here as a lemma:

 \begin{lem}[{\cite{Ano73}}]
\label{lemT5.2}
Let $\varphi \in L^1(\tal)$. If the equation
 \begin{equation}
\label{eqT5.3}
\psi(x) - \psi(x + \be) = \varphi(x) \quad \text{a.e.}
\end{equation}
has a measurable solution $\psi: \tal \to \C$,  then $\int_{\tal} \varphi = 0$.
 \end{lem}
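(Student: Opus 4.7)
The plan is to iterate the cohomological equation and then combine the Birkhoff ergodic theorem with a recurrence argument that exploits the mere measurability of $\psi$. First, induction on $n$ shows that the hypothesis $\psi(x) - \psi(x+\be) = \varphi(x)$ telescopes, for every positive integer $n$, to
\begin{equation*}
\psi(x) - \psi(x + n\be) \;=\; S_n(x) \;:=\; \sum_{k=0}^{n-1}\varphi(x+k\be) \quad \text{a.e.\ on $\tal$,}
\end{equation*}
each iteration introducing only a null exceptional set, so that all of these identities hold off a single common null set.

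In the application one may assume that $\al, \be$ are rationally independent, in which case the translation $T: x \mapsto x + \be$ is an ergodic measure-preserving transformation of $\tal$. Applying the Birkhoff ergodic theorem to $\varphi \in L^1(\tal)$ yields $S_n(x)/n \to L := \tfrac{1}{\al}\int_{\tal}\varphi$ a.e., so the conclusion reduces to showing that $L = 0$.

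The main obstacle is that $\psi$ is not assumed integrable, so one cannot simply integrate the equation. The remedy is that a $\C$-valued measurable function is automatically finite a.e., hence for sufficiently large $M > 0$ the set $E_M := \{y \in \tal : |\psi(y)| \le M\}$ has positive measure. Applying the ergodic theorem now to $\1_{E_M}$, for a.e.\ $x$ the orbit $\{x + n\be\}_{n \ge 1}$ visits $E_M$ with asymptotic density $m(E_M)/\al > 0$; in particular there is a subsequence $n_k \to \infty$ with $x + n_k\be \in E_M$. Along such a subsequence,
\begin{equation*}
\frac{S_{n_k}(x)}{n_k} \;=\; \frac{\psi(x) - \psi(x + n_k\be)}{n_k} \;\longrightarrow\; 0,
\end{equation*}
because $\psi(x)$ is a fixed finite value while $|\psi(x+n_k\be)| \le M$, forcing $L = 0$ as required. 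The remaining case in which $\al/\be$ is rational can be treated separately: writing $\be = p\al/q$ with $\gcd(p,q)=1$ and iterating the identity $q$ times gives $S_q(x) = 0$ a.e., after which direct integration on $\tal$ yields $q\int_{\tal}\varphi = 0$.
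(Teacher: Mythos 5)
Your proof is correct and, unlike the paper, actually supplies an argument: the paper invokes \lemref{lemT5.2} purely as a citation of Anosov's theorem and gives no proof. Your route --- telescope to $\psi(x) - \psi(x+n\be) = S_n(x)$, invoke Birkhoff to get $S_n(x)/n \to \tfrac{1}{\al}\int_{\tal}\varphi$ a.e., and kill the limit by finding, via the ergodic theorem applied to $\1_{E_M}$, a subsequence $n_k\to\infty$ with $x+n_k\be$ in a set on which $|\psi|\le M$ --- is the standard ergodic-theoretic proof that an integrable coboundary has zero mean, and every step goes through. One should note explicitly (as you do implicitly) that the countably many null exceptional sets --- one for each iterate of the cocycle identity, one for each of the two Birkhoff applications, and the null set involved in choosing $M$ with $m(E_M)>0$ --- must be aggregated into a single null set before fixing $x$. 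Your handling of the rational case $\be/\al = p/q$ is also correct: there the rotation by $\be$ has finite order $q$ on $\tal$, so $S_q \equiv 0$ a.e., and integrating over $\tal$ gives $q\int_{\tal}\varphi = 0$.

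One caveat on scope: Anosov's theorem, as the paper notes, is stated for an arbitrary measure-preserving transformation of a finite measure space, with no ergodicity assumption. Your Birkhoff step uses ergodicity to identify the time average with $\tfrac1\al\int_{\tal}\varphi$; in the non-ergodic general case the same recurrence argument only shows that the conditional expectation of $\varphi$ onto the invariant $\sigma$-algebra vanishes a.e., after which one additional integration yields $\int\varphi=0$. Since the paper applies the lemma exclusively to circle rotations, and you dispose of the periodic (hence non-ergodic) rational case by hand, your proof covers everything the paper actually uses.
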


In fact, in \cite[Theorem 1]{Ano73} a more general version of this result 
was stated and proved, in the context of a measure-preserving
transformation acting on a finite measure space. Here we only state the 
result in the special case where the transformation is  a rotation of the circle $\tal$.

\begin{proof}[Proof of \thmref{thmS5.1}]
Assume that $f$ is a measurable function on $\R$ satisfying   \eqref{eqA2.1.1}.
We shall suppose that   $f$ has bounded support, and   prove
that this implies that the vector
$(p,q)$ must be proportional to $(\be, \al)$.

By translating $f$ we may assume that   $\supp f \sbt [0, n \be)$,
where $n$ is a positive, large enough integer. We can then write 
 \begin{equation}
\label{eqT5.5}
f = \sum_{k=0}^{n-1} f_k, \quad f_k  := f \cdot \1_{[k \be, (k+1)\be)}.
\end{equation}
By the first condition in \eqref{eqA2.1.1} we have
 \begin{equation}
\label{eqT5.9}
\sum_{k=0}^{n-1} \pal(f_k) = \pal(f) = p \quad \text{a.e.}
\end{equation}
The second condition in \eqref{eqA2.1.1}  can be equivalently stated as  
 \begin{equation}
\label{eqT5.6}
\sum_{k=0}^{n-1} f_k(x + k \be) = q \cdot \1_{[0, \be)}(x) \quad \text{a.e.}
\end{equation}
It follows from \eqref{eqT5.6} that
 \begin{equation}
\label{eqT5.7}
\sum_{k=0}^{n-1} \pal(f_k) (x + k \be) = q \cdot \pal(\1_{[0, \be)})(x) \quad \text{a.e.}
\end{equation}
Let us define
 \begin{equation}
\label{eqT5.12}
\varphi :=  p - q \cdot \pal(\1_{[0, \be)}),
 \quad \psi_k :=  \pal(f_k), \quad 0 \leq k \leq n-1,
\end{equation}
then   $\varphi \in L^1(\tal)$, while
the $\psi_k$ are measurable functions on $\tal$.
 If we now subtract the equality
 \eqref{eqT5.7} from  \eqref{eqT5.9}, this yields
 \begin{equation}
\label{eqT5.10}
\sum_{k=1}^{n-1}  ( \psi_k(x) - \psi_k (x + k \be) ) 
= \varphi(x) \quad \text{a.e.}
\end{equation}
Lastly, we introduce 
 a measurable function $\psi$ on $\tal$  defined by
 \begin{equation}
\label{eqT5.14}
\psi(x) := \sum_{k=1}^{n-1} \sum_{j=0}^{k-1} \psi_k(x + j \be),
\end{equation}
and observe that \eqref{eqT5.10} can be reformulated as
 \begin{equation}
\label{eqT5.13}
\psi(x) - \psi(x + \be) = \varphi(x) \quad \text{a.e.}
\end{equation}
This allows us to apply \lemref{lemT5.2}, which
yields  $\int_{\tal} \varphi = 0$. 
But using \eqref{eqT5.12} we have
 \begin{equation}
\label{eqT5.16}
\int_{\tal} \varphi = 
\int_{\tal} ( p - q \cdot \pal(\1_{[0, \be)}) )
= p \al - q \int_{\R} \1_{[0, \be)} = p \al - q \be.
\end{equation}
We conclude that  $p \al - q \be = 0$,  so the vector
$(p,q)$ is proportional to $(\be, \al)$.
\end{proof}


\section{Simultaneous tiling by  a bounded function}
 \label{secY2}

\subsection{}
The following question was posed to us
by Kolountzakis: Let
 $\al, \be$ be rationally  independent.
Given two
arbitrary complex numbers $p,q$, does  
there exist  a \emph{bounded} measurable function $f$
on $\R$ satisfying the simultaneous tiling condition
\eqref{eqA2.1.1}?

The answer is once again `yes' if we have 
$(p,q) = \lam (\be, \al)$, $\lam \in \C$, 
since in this case the continuous,
compactly supported  function
$f = \lam \1_{[0, \al)} \ast \1_{[0,\be)}$
satisfies \eqref{eqA2.1.1}.

On the other hand, 
the problem is nontrivial
if the vector
$(p,q)$ is not proportional to $(\be, \al)$. 
We note that in this case, a bounded function
$f$ satisfying \eqref{eqA2.1.1}
\emph{cannot be
supported on any set of finite  measure}.
Indeed,  if $\mes( \supp f)$ is finite
then $f$ must be in $L^1(\R)$, which is not
possible due to  \propref{propA2.1}.

We will nevertheless 
prove that  the question above 
admits an affirmative 
answer. Moreover, 
one can always choose $f$ to be
continuous and vanishing at infinity:

 \begin{thm}
\label{thmR25.1}
Let $\al, \be$ be rationally  independent.
For any two complex numbers $p,q$  
one can find a continuous function $f$
on $\R$ vanishing at infinity, and
satisfying \eqref{eqA2.1.1}.
 \end{thm}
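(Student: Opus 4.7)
The plan is to produce the required function $f$ in two stages: first, by a careful refinement of the construction of \lemref{lemFL1.2}, build a measurable function $f_0$ that satisfies \eqref{eqA2.1.1} and whose support is a union of disjoint intervals progressing to $+\infty$, on which the supremum of $|f_0|$ tends to zero; second, convolve $f_0$ with a continuous, compactly supported bump of integral one to obtain the desired $f$.

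The key analytic ingredient is a quantitative refinement of \lemref{lemFL4.1}: given the same data together with an arbitrary positive integer $N$, one can produce an elementary set $U \sbt (r,+\infty)$ and a measurable function $f$ supported on $U$ satisfying the conditions of \lemref{lemFL4.1} and in addition $\|f\|_\infty \le \|\varphi\|_\infty/N$. The construction is to make $U$ a union of $N$ disjoint $\pal$-one-to-one lifts of $A$ to $\R$, obtained by translating a base lift by integer multiples of $\al$ (which preserves the $\pal$-image) and then adjusting by integer multiples of $\be$ to place each sheet's $\pbe$-image inside the narrow interval $J \sbt \tbe$. Kronecker's theorem, together with the ability to take arbitrarily large integer shifts, guarantees that $N$ such disjoint sheets can be positioned inside $(r,+\infty)$ with their $\pbe$-images forming a collection of disjoint short sub-intervals of $J$. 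On each sheet we set $f(x) := \varphi(\pal(x))/N$, so that $\pal(f) = N \cdot (\varphi/N) = \varphi$ on $A$ while $\|f\|_\infty$ is reduced by the factor $1/N$.

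Plugging this refined lemma into the proof of \lemref{lemFL1.2} with multiplicities $N_n \to \infty$ at stage $n$, one obtains $\|g_n\|_\infty \le \|\varphi_n\|_\infty/N_n$ and $\|h_n\|_\infty \le \|\psi_n\|_\infty/N_n$; choosing $N_n$ to grow fast enough drives $\|g_n\|_\infty + \|h_n\|_\infty \to 0$. Combined with the disjointness of the supports (chosen, as in the original lemma, to progress to $+\infty$), this yields a measurable $f_0$ satisfying \eqref{eqA2.1.1} whose local supremum decays to zero at infinity. Finally, setting $f := f_0 \ast \eta$ for a continuous compactly supported $\eta$ with $\int_\R \eta = 1$, the function $f$ is continuous (thanks to $\eta$), satisfies \eqref{eqA2.1.1} because $\sum_k f(x-k\al) = \bigl(\sum_k f_0(x-k\al)\bigr)\ast \eta = p \cdot \int_\R \eta = p$ a.e.\ (and analogously at level $q$ for $\be$), and lies in $C_0(\R)$ since for $|x|$ large the integral $\int f_0(x-t)\eta(t)\,dt$ is bounded by $\|\eta\|_1$ times the local supremum of $|f_0|$ near $x$, which tends to zero.

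The main obstacle is verifying the refined lemma so that the multiplicity construction remains compatible with the disjointness/structural constraints exploited in the proof of \lemref{lemFL1.2}: each $V_k$ now contributes $N_k$ disjoint sub-intervals to $\pal(V_k)$, which must still be placeable in the remainder $A \setminus \bigcup_{j \leq k} A_j$, and likewise for the $U_n$ within the remainder of $B$. I expect this to be the delicate point; it is handled by making the slice sizes $|A_n|, |B_n|$ shrink fast enough (and refining which open sub-intervals are targeted by the successive $\pbe$-images of $U_k$ and $\pal$-images of $V_k$) so that the multiplicity $N_k$ required at stage $k$ still fits within the available free space, again using the freedom provided by Kronecker's theorem and the rational independence of $\al, \be$.
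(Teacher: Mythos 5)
Your averaging idea --- produce $N$ disjoint sheets $U_j = U_0 + m_j\al$ whose $\pal$-images all coincide with $A$, set $f = \tfrac{1}{N}\varphi\circ\pal$ on each sheet, and note $\pal(f) = \varphi$ while $\|f\|_\infty$ drops by a factor of $N$ --- is exactly the mechanism the paper uses in its Lemma~\ref{lemFC1.1}, and the way you plug it into the machinery of Lemma~\ref{lemFL1.2} (increasing multiplicities $N_n$ so that $\|g_n\|_\infty + \|h_n\|_\infty \to 0$, accommodating the proliferating sheets in the remaining free space of $\tal$ and $\tbe$) matches the paper's plan in spirit. So the first stage is on the right track.

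The gap is in the second stage, the convolution $f := f_0 \ast \eta$. Your argument
$\sum_k f(x - k\al) = \bigl(\sum_k f_0(\cdot - k\al)\bigr) \ast \eta$
requires interchanging the sum over $k$ with the integral against $\eta$, i.e.\ one needs
$\int \bigl(\sum_k |f_0(x - k\al - t)|\bigr)\,|\eta(t)|\,dt < \infty$
for a.e.\ $x$. The inner sum $\Phi(s) := \sum_k |f_0(s - k\al)|$ is $\al$-periodic with $\int_0^{\al}\Phi = \|f_0\|_{L^1(\R)}$, so this Fubini step succeeds precisely when $f_0 \in L^1(\R)$. But $f_0$ cannot be in $L^1(\R)$ unless $p\al = q\be$ (Proposition~\ref{propA2.1}), which is the uninteresting case; for generic $p,q$ your $f_0$ must fail to be integrable, hence $\Phi$ fails to be locally integrable, and the interchange is not justified. (And indeed it must fail: $f_0$ bounded with $\supp f_0$ of finite measure would force $f_0\in L^1$, so either your multiplicities drive $\mes(\supp f_0)$ to infinity --- in which case $f_0 \notin L^1$ and the convolution collapses --- or you contradict the level constraint.) This is not a technicality that can be waved away: if the commutation were valid, one could also compute $\int_{\tal}\pal(f) = \int_\R f$, reproducing the $L^1$ obstruction that the theorem is meant to circumvent.

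The paper avoids this by building continuity \emph{into the construction at each step} rather than smoothing afterward. The crucial extra ingredient in Lemma~\ref{lemFC1.1}, which your refined lemma lacks, is the relaxation $\pal(f) = \varphi$ on an elementary set $A' \subset A$ with $m(A\setminus A') < \delta$ rather than on all of $A$: this slack is used to taper the continuous $f_0$ to zero at $\partial U_0$. Without it, setting $f = \varphi\circ\pal / N$ exactly on each sheet is discontinuous at the sheet boundaries, and as argued above the post-hoc convolution cannot repair this. You should replace the refined lemma by one that outputs a continuous function with the $A'$-relaxation (and correspondingly grow the sets $A_1\cup\dots\cup A_n \nearrow \tal$, $B_1\cup\dots\cup B_n \nearrow \tbe$ along the induction so that the union has full measure), and drop the convolution stage entirely.
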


\subsection{}
In what follows we assume $\al, \be$  to be rationally  independent. Our proof of 
\thmref{thmR25.1} is based on the 
technique used to prove 
\lemref{lemFL1.2},
but this time we will use the following variant of
\lemref{lemFL4.1}.

\begin{lem}
  \label{lemFC1.1}
Let $A$ be an elementary set in $\tal$, and
$\varphi$ be a continuous function on $A$.
Then given any $\del>0$ and any
nonempty open interval  $J \sbt \tbe$,
one can find an elementary set $U \sbt \R$ 
and a continuous function $f$ on $\R$
such that
\begin{enumerate-roman}
\item
  \label{lemFC1.1.1}
$\pal(U) = A$;

\item
  \label{lemFC1.1.2}
$\pbe(U) \sbt J$;

\item
  \label{lemFC1.1.3}
$f$ is supported on $U$,  $|f(x)| \leq \del$ for all $x \in U$;

\item
  \label{lemFC1.1.4}
$\pal(f) = \varphi$ on some elementary set $A' \sbt A$,
$m(A \setminus A') < \del$.
\end{enumerate-roman}
Moreover, $U$ can be chosen inside the
half-line $(r, +\infty)$ for any given number $r$.
\end{lem}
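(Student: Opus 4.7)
The plan is to extend the construction of \lemref{lemFL4.1} by adding two new ideas. A single lift of $A$ into $\R$ would force $f = \varphi \circ \pal$, which is discontinuous at the endpoints of the lifts and whose amplitude equals $\sup_A|\varphi|$ — both unacceptable here. I will instead use $N$ disjoint lifts of $A$, each carrying a $(1/N)$-rescaled copy of $\varphi \circ \pal$, with $N$ chosen so that $M/N \le \del$ where $M := \sup_A|\varphi|$ (finite since $\varphi$ is continuous on the compact set $A$). Continuity will then be recovered by linearly tapering the function to zero near the endpoints of each lifted interval; this makes $\pal(f) \ne \varphi$ on a narrow strip near the boundary of $A$, which is permitted since \ref{lemFC1.1.4} only requires the identity on a subset $A' \sbt A$ with $m(A \setminus A') < \del$.

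Concretely, I will first decompose $A = \bigcup_{j=1}^{n} A_j$ into closed intervals of length less than $\al$ with disjoint interiors. Applying \lemref{lemFL1.1} $N n$ times, I obtain pairwise disjoint closed intervals $U_j^{(k)} \sbt (r,+\infty)$, $1 \le k \le N$, $1 \le j \le n$, each mapping bijectively onto $A_j$ under $\pal$ and with $\pbe(U_j^{(k)}) \sbt J$; setting $U := \bigcup_{k,j} U_j^{(k)}$ immediately gives properties \ref{lemFC1.1.1} and \ref{lemFC1.1.2}. I then choose $\eta > 0$ with $2 n \eta < \del$ and $\eta$ less than half the length of any $A_j$, and shrink each $U_j^{(k)}$ by $\eta$ at each end to get $\tilde U_j^{(k)}$. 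I define $f$ to equal $(1/N)(\varphi \circ \pal)$ on each $\tilde U_j^{(k)}$, to interpolate linearly between this value and $0$ on each length-$\eta$ endpoint strip of $U_j^{(k)}$, and to vanish outside $U$.

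Then $f$ is continuous on $\R$ (it vanishes at the outer endpoints of each $U_j^{(k)}$ by construction, and matches at the inner boundary by continuity of $\varphi$), and $|f| \le M/N \le \del$, which yields \ref{lemFC1.1.3}. Setting $A' := \bigcup_{j} \pal(\tilde U_j^{(k)})$ (an elementary set, and the same for every $k$), I get $m(A \setminus A') = 2 n \eta < \del$; for each $x \in A'$ the fiber $\pal^{-1}(x)$ meets exactly one $\tilde U_j^{(k)}$ per index $k$, so $\pal(f)(x) = N \cdot (1/N)\varphi(x) = \varphi(x)$, establishing \ref{lemFC1.1.4}. The only step requiring care is the simultaneous placement of the $N n$ pairwise disjoint lifts with all $\pbe$-images inside $J$, but this is delivered directly by \lemref{lemFL1.1} (and ultimately Kronecker's theorem), so the remainder is straightforward bookkeeping around the $1/N$-scaling and the taper.
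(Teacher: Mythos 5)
Your proof is correct and uses essentially the same two key ideas as the paper's: averaging over $N$ disjoint lifts, each carrying a $(1/N)$-scaled copy of $\varphi\circ\pal$, to force $|f|\le M/N\le\del$; and tapering to zero on small endpoint strips of the lifts, which recovers continuity while sacrificing $\pal(f)=\varphi$ only on a set of measure $<\del$. The paper organizes the construction slightly differently --- it applies \lemref{lemFL1.1} once to all of $A$, obtaining a single elementary set $U_0$, and then defines $U_j:=U_0+m_j\al$ and $f_j(x):=f_0(x-m_j\al)$, so the identity $\pal(f_j)=\pal(f_0)$ is automatic --- whereas you decompose $A$ into intervals first and build the $N$ copies of each piece from scratch. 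Both routes lead to the same place.

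One small imprecision worth noting: you decompose $A$ into closed intervals $A_j$ of length less than $\al$ and then assert that each application of \lemref{lemFL1.1} to $A_j$ produces a single closed interval $U_j^{(k)}$. That is only guaranteed if the $A_j$ also have length less than that of $J$; otherwise \lemref{lemFL1.1} returns an elementary set with several components, and your boundary-strip bookkeeping (the claim $m(A\setminus A')=2n\eta$) undercounts the number of endpoints being trimmed. The fix is trivial --- require $\operatorname{length}(A_j)<\min\{\al,\operatorname{length}(J)\}$ --- or, as in the paper, simply allow the lifts to be elementary sets and choose $A'$ by removing a small neighbourhood of the boundary of $\pal(\interior(U_0))$ rather than trimming a fixed $\eta$ from each component.
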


\begin{proof}
We apply \lemref{lemFL1.1}
to the elementary set $A$ and to the 
open interval $J$. The lemma yields
an elementary set $U_0 \sbt \R$ 
such that
$\pal(U_0) = A$, 
$\pal$ is one-to-one on $\interior (U_0)$,
and $\pbe(U_0) \sbt J$. 
Let $M := \sup |\varphi(t)|$, $t \in A$, and 
 choose an integer 
$N$ sufficiently large so that $N\delta > M$. We then
 find integers $m_1, \dots, m_N$ such that,
if we denote $U_j := U_0 + m_j \al$, 
$1 \leq j \leq N$,
then 
$\pbe(U_j) \sbt J$ for every $j$.
This is possible due to 
Kronecker's theorem,  since
$\al, \be$ are rationally  independent and 
$\pbe(U_0)$ is a compact subset of the
open interval $J$. We can also
choose the integers $m_1, \dots, m_N$ such that
the sets $U_1, \dots, U_N$ are disjoint and all
of them are contained
in a given half-line $(r, +\infty)$.  

We now find an elementary set $U'_0 \sbt \interior(U_0)$,
such that the (also elementary) set
$A' := \pal(U'_0)$ satisfies
$m(A \setminus A') < \del$. 
Let $f_0$ be  a continuous function
 on $\R$, supported on $U_0$, and satisfying
$f_0(x) = \varphi(\pal(x))$ for $x \in U'_0$,
and $|f_0(x)| \leq M$ for every $ x \in \R$.
Since
$\pal$ is one-to-one on $\interior (U_0)$,
we have 
$\pal(f_0) = \varphi$ on the set $A'$.

Finally we define the continuous function
 \begin{equation}
\label{eqFL2.55}
f(x) := \frac1{N} \sum_{j=1}^{N} f_j(x), \quad
f_j(x) :=  f_0(x - m_j \al).
\end{equation}
Then $f_j$ 
is supported on $U_j$, $1 \leq j \leq N$, and
hence $f$ is supported on the union
 \begin{equation}
\label{eqFL2.7}
U := U_1 \cup U_2 \cup \dots \cup U_N.
\end{equation}
Recall that $U_1, \dots, U_N$ are disjoint sets, and that 
$|f_j| \leq M$ for each $j$.  
It thus follows from \eqref{eqFL2.55} that
 $|f(x)| \leq MN^{-1} \leq \del$ for every
$x \in \R$. So property 
\ref{lemFC1.1.3} is satisfied.

Notice that 
$\pal(U_j) = \pal(U_0) = A$
for every $j$. In particular, this implies
\ref{lemFC1.1.1}.

Since
$f_j$ is a translate of $f_0$ by an integer
multiple of $\al$, we have
$\pal(f_j) = \pal(f_0)$ for each $1 \leq
j \leq N$. It follows that
$\pal(f) = \pal(f_0) = \varphi$ on $A'$.
So \ref{lemFC1.1.4} is established.

Lastly, 
$\pbe(U_j) \sbt J$ for every $j$, hence by
\eqref{eqFL2.7}
we have
$\pbe(U) \sbt J$ as well. We conclude that also
the  condition
 \ref{lemFC1.1.2} holds 
and the lemma is proved.
\end{proof}

\begin{proof}[Proof of \thmref{thmR25.1}]
The  approach is similar to the proof of
 \lemref{lemFL1.2}, so we shall be brief.
We construct by induction 
a sequence $A_1, A_2, \dots$ of  pairwise disjoint elementary sets in $\tal$, 
a sequence $B_1, B_2, \dots$ of  pairwise disjoint  elementary sets in $\tbe$, 
a sequence $U_1, V_1, U_2, V_2, \dots$  of pairwise disjoint elementary sets 
in $\R$ accumulating at infinity, 
and a sequence $g_1,h_1,g_2,h_2, \dots$ 
of continuous functions on $\R$,
in the following way.

Suppose that we have already constructed the 
sets $A_k, B_k, U_k, V_k$ and the functions $g_k, h_k$ for
 $1 \leq k \leq n-1$. We use \lemref{lemFC1.1}
to find an elementary 
set $U_n \sbt \R$, and a continuous function $g_n$ on $\R$, such that
$\pal(U_n)$ is disjoint from the sets $A_1, \dots, A_{n-1}$,
$\pbe(U_n)$ is disjoint from the sets $B_1, \dots, B_{n-1}$,
$g_n$ is supported on $U_n$, $|g_n(x)| \leq 2^{-n}$ for all $x \in U_n$,
and $\pal(g_n) = p - \sum_{k=1}^{n-1} \pal(h_k)$ on some
elementary set $A_n \sbt \tal$, which is 
disjoint from $A_1,  \dots, A_{n-1}$, and such that
 \begin{equation}
\label{eqFL3.22}
(1 - 2^{-n}) \al < m(A_1 \cup \dots \cup A_n) < \al.
\end{equation}
Then, we use again \lemref{lemFC1.1} but with the
roles of $\al,\be$ interchanged, to find an elementary 
set $V_n \sbt \R$, and a continuous function $h_n$ on $\R$, such that
$\pbe(V_n)$ is disjoint from the sets $B_1, \dots, B_{n-1}$,
$\pal(V_n)$ is disjoint from $A_1, \dots, A_n$,
$h_n$ is supported on $V_n$, $|h_n(x)| \leq  2^{-n}$ for all $x \in V_n$,
and $\pbe(h_n) = q - \sum_{k=1}^{n} \pbe(g_k)$ on some
elementary set $B_n \sbt \tbe$, which is 
disjoint from $B_1,  \dots, B_{n-1}$, and such that
 \begin{equation}
\label{eqFL3.25}
(1 - 2^{-n}) \be  < m(B_1 \cup \dots \cup B_n) < \be.
\end{equation}

We observe that  \lemref{lemFC1.1} allows us to choose the 
sets $U_1, V_1, U_2, V_2, \dots$  
to be  pairwise disjoint and accumulating at $+ \infty$.  
So we may assume this to be case.

Finally, we define 
$f  := \sum_{n=1}^{\infty} (g_n + h_n)$,
which is a continuous function on $\R$  vanishing at infinity.
Similarly to  the proof of
 \lemref{lemFL1.2}, one can check that
$\pal(f) = p$  on the union $\cup_{n=1}^{\infty} A_n$, 
a set of full measure in $\tal$, 
while
$\pbe(f) = q$
 on $\cup_{n=1}^{\infty} B_n$, 
a set of full measure in $\tbe$.
Thus $f$ satisfies the simultaneous tiling condition
\eqref{eqA2.1.1}.
(We note that both sums in \eqref{eqA2.1.1}
have only finitely many nonzero terms
for almost every $x \in \R$.)
\end{proof}

\subsection{Remarks}
1. One can choose the
function $f$ in  \thmref{thmR25.1}
to be not only
continuous  but in fact \emph{smooth}. 
To this end it suffices to replace
\lemref{lemFC1.1} with a similar version,
where 
$\varphi$ and $f$ are smooth functions.

2.  If the tiling level vector $(p,q)$ is not proportional to $(\be, \al)$,
then the function $f$ in  \thmref{thmR25.1} can only have
slow decay at infinity. In fact,   
$f$ cannot be in $L^1(\R)$ due to  \propref{propA2.1}.


\section{Incommensurable 
arithmetic progressions: Lower bounds for the support 
size of a simultaneously tiling function}
\label{secE6}

In this section we prove Theorems \ref{thmA2.3} and \ref{thmA2.4}.
These theorems give a sharp lower bound for the measure of
$\supp f$, where $f$ is an arbitrary
measurable function satisfying the simultaneous tiling condition \eqref{eqA2.1.1}.

Our proof is based on a graph-theoretic approach.
We will show that any simultaneously tiling function $f$ induces a
weighted bipartite graph, whose vertices and edges are also endowed with a 
measure space structure. The main method of the proof
is an \emph{iterative leaves removal process}
that we apply to  this graph.

Throughout this section we again
suppose that $\al, \be > 0$ are two fixed,
rationally  independent real numbers.

\subsection{Bipartite graphs and iterative leaves removal}

We start by introducing some purely graph-theoretic concepts and notation.

A \emph{bipartite graph} is a triple $G=(A,B,E)$, consisting of two disjoint sets 
$A, B$ of vertices, and a set $E\subset A \times B$ of edges.
The sets $A, B$ and $E$ may be infinite, and may even be uncountable.
However, we will assume that \emph{each vertex in the graph $G$ has finite degree}.

For any set of vertices $S\subset A\cup B$,
 we denote by $E(S)$  the set of all edges which are incident to a vertex from $S$. 

For each $k \geq 0$ we let $A_k$ be the set of vertices of degree $k$ in $A$, 
and $B_k$ be the set of vertices of degree $k$ in $B$.
In particular,  $A_1$ and $B_1$ are the sets of leaves in $A$ and $B$, respectively. 
 Note that the sets $A_k$, $B_k$ form a partition of $A \cup B$. 
 
A vertex $v \in A \cup B$ will be called a \emph{star vertex} 
if all the neighbors of $v$   in the graph are leaves. 
We denote by $A_*$ the set of star vertices which belong to $A$,
and by $B_*$ the set of star vertices that belong to $B$.

\begin{definition}[leaves removal]
\label{leavRem}
Given a bipartite graph $G=(A,B,E)$ \emph{with no isolated vertices}, we define its \emph{$A$-leaves-removed-graph} to be the graph 
 \begin{equation}
\label{eqFX1.1}
G_A = (A\setminus A_1, B \setminus B_*, E\setminus E(A_1)),
\end{equation}
that is, $G_A$ is the graph obtained from $G$ by removing all the leaves on the $A$-side (including the edges incident to those leaves) and then dropping the star vertices in $B$, which are the vertices on the $B$-side that became isolated due to the removal of all their neighbors. Similarly we define the \emph{$B$-leaves-removed graph} to be
 \begin{equation}
\label{eqFX1.2}
G_B=(A\setminus A_*, B\setminus B_1, E\setminus E(B_1)).
\end{equation}
\end{definition}

\begin{remark}
Notice that assuming $G$ to have no isolated vertices implies that the new graph $G_A$ must have no isolated vertices either.  Indeed, when we remove the leaves from $A$, the only vertices which become isolated are those in $B_*$, and we make sure to remove these vertices from $B$. Similarly, the graph $G_B$ has no isolated vertices.
\end{remark}

\begin{definition}[iterative leaves removal]
\label{iterLeavRem}
Given a bipartite graph $G=(A,B,E)$
 \emph{with no isolated vertices},
 we define its \emph{leaves-removal-graph-sequence} $G^{(n)}=(A^{(n)}, B^{(n)}, E^{(n)})$
as follows. We let $G^{(0)} = G$, and for each $n$, if $n$ is even we let
$G^{(n+1)} = (G^{(n)})_A$,
while if $n$ is odd then
$G^{(n+1)} = (G^{(n)})_B$.

In other words, the sequence is obtained by consecutive removal of leaves alternately from each side of the graph.
First we remove all the leaves from the $A$-side, as well as  the
 star vertices on the $B$-side.
 By doing so we may have created some new leaves on the $B$-side, as some vertices  in $B$ may have lost all their neighbors in $A$ but one. In the second step we remove all the leaves from the $B$-side and the  star vertices on the $A$-side.
 Then again some vertices on the $A$-side may become leaves. The process continues in a similar fashion.

Notice that if at the $n$'th step of the iterative process there are no leaves to be removed on the relevant side of the graph, then we simply obtain $G^{(n+1)}=G^{(n)}$. 
\end{definition}

\begin{definition}[weighted bipartite graph]
\label{wgtGrphDef}
We say that a bipartite graph $G=(A,B,E)$ is \emph{weighted} if it is endowed with an edge-weight function $w:E\rightarrow \C$ which assigns a complex-valued weight to each edge of the graph. 
\end{definition}

For each vertex $v\in A\cup B$, the  sum of the weights of all the 
(finitely many) edges incident to $v$
will be called the \emph{weight of the vertex $v$}.

\subsection{The graph induced by a subset of the real line}

We now  turn our attention to a  specific construction of a bipartite graph.

\begin{definition}[{the induced graph $G(\Omega)$}]
\label{gOm}
Let $\Omega$ be  an arbitrary   subset of $\R$.
We associate to $\Omega$ a 
bipartite graph $G(\Omega)$  defined as follows.
The set of vertices of the graph is the union of the
two disjoint sets 
$A=\pal(\Om)$ and $B=\pbe(\Om)$, 
which form the bipartition of the graph. The set of edges $E$
of the graph consists of all edges of the form
$e(x):=(\pal(x),\pbe(x))$ where  $x$
goes through the elements of   $\Om$.
\end{definition}

\begin{remark}
\label{omegaEdgesId}
We note that distinct points  $x,y \in \Omega$
correspond to distinct edges
$e(x)$, $e(y)$ in $E$. Indeed,
  if $e(x) = e(y)$ then we must have
  $x-y \in \al \Z \cap \be \Z$,
  which in turn implies that  $x=y$ since $\al,\be$ are rationally independent.
  Thus, the 
representation of the elements of $\Om$ as edges in the graph
 is one-to-one. In the sequel, we will often
identify edges of the graph  with elements of the set $\Omega$.
\end{remark}

\begin{definition}[{finite degrees assumption}]
We  say that a set $\Om \subset \R$ satisfies the \emph{finite degrees assumption} if each vertex in the graph   $G(\Om)$ has finite degree. This is the case if and only if  for every $x \in \R$, the  sets $\Om \cap (x + \al \Z)$ and $\Om \cap (x+ \be \Z)$ have both finitely many elements.
\end{definition}

In what follows, we shall assume that the given set $\Om \subset \R$ satisfies the finite degrees assumption.

\begin{remark}
Notice that the graph $G(\Om) = (A,B,E)$ has no isolated vertices. 
Indeed, if $a$ is a vertex in $A$ then $a = \pal(x)$ 
for some $x \in \Om$, so $a$ is incident to the edge 
$e(x)=(\pal(x),\pbe(x))$. Similarly, any vertex $b \in B$
 is incident to at least one edge in $E$.
\end{remark}

\begin{remark}
\label{remE11.2}
Let $G_A(\Om)$ be the $A$-leaves-removed-graph of $G(\Omega)$.
Notice that $G_A(\Om)$ is the graph induced by 
the set $\Omega_A = \Omega \setminus E(A_1)$, where here
 we identify edges  of the graph with elements of the set $\Omega$ (see
 \remref{omegaEdgesId}). Thus we have $G_A(\Omega) = G(\Omega_A)$.
Similarly,  the $B$-leaves-removed-graph $G_B(\Om)$  of $G(\Omega)$ is the 
 graph induced by the set $\Omega_B = \Omega \setminus E(B_1)$
 (where again edges of the graph are identified with elements of $\Om$).
Hence the iterative leaves removal process applied to the graph $G(\Omega)$
induces a sequence of sets
 $\Omega^{(n)} \sbt \R$,  satisfying
 $\Omega^{(n+1)} \subset \Omega^{(n)} \subset \Om$ for all $n$, and
 such that   the leaves-removal-graph-sequence
 $ G^{(n)}(\Omega)$ is given by
$G^{(n)}(\Omega) = G(\Omega^{(n)})$.
\end{remark}

\subsection{Vertices and edges as measure spaces}
Assume now that  $\Om$ is a \emph{measurable} subset of $\R$,
satisfying  the finite degrees assumption. 
In this case the induced graph $G(\Om)$
can be endowed with an additional measure space structure, as follows.

Recall that we have endowed  $\T_\al$ and $\T_\be$ with the Lebesgue measure, normalized such that $m(\T_\al) = \al$ and $m(\T_\be) = \beta$. We notice that the two vertex sets $A = \pal(\Om)$ and $B = \pbe(\Om)$ of the graph $G(\Om) = (A,B,E)$ are measurable subsets of $\tal$ and $\tbe$ respectively. We may therefore consider $A$ and $B$ as measure spaces, with the measure space structure induced from the embedding  of $A$ and $B$ into $\tal$ and $\tbe$ respectively.

We also endow the edge set $E$ with a measure space  structure, induced from the identification of $E$ with $\Om$ as a (measurable) subset of $\R$ as in  \remref{omegaEdgesId}. (Notice that we \emph{do not}  endow 
$E$ with the measure  space structure induced from the embedding of $E$ into the product space $A \times B$.) 

 In the sequel, we will also consider the entire vertex set  $V := A \cup B$ as a single measure space, formed by the disjoint union of the two measure spaces $A$ and $B$.

\begin{lem}[measurability lemma]
\label{mesLem}
\quad
\begin{enumerate-roman}
\item
   \label{mesLem.1}
For each $k$ the set $A_k$ of vertices of degree $k$ in $A$ is a measurable subset of $A$;
\item
   \label{mesLem.2}
The set $A_*$ of star vertices in $A$ (that is, the
 vertices in $A$ all of whose neighbors are leaves) is a measurable subset of $A$;
\item
   \label{mesLem.3}
If $S \subset A$ is a measurable set of vertices, then $E(S)$ (the set of edges incident to a vertex in $S$) is a measurable subset of $E$.
\end{enumerate-roman}
Similar assertions hold for the sets $B_k$, $B_*$ and $S \sbt B$.
\end{lem}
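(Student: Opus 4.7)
The plan is to reduce each of the three measurability claims to standard facts about the measurability of translation-invariant sums and of $\pi_\alpha$-saturations of measurable sets, exploiting throughout the finite degrees assumption, which guarantees that all counting functions we introduce take values in $\Z_{\ge 0}$ (not $\infty$) and are pointwise limits of measurable step functions.

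For part \ref{mesLem.1}, my first step would be to introduce the \emph{fiber-counting function}
\begin{equation*}
N_\al(a) := \sum_{n \in \Z} \1_{\Om}(a + n\al), \qquad a \in [0,\al) \cong \tal.
\end{equation*}
By \remref{omegaEdgesId}, distinct points of $\Omega$ produce distinct edges of $G(\Omega)$, so for each $a \in A$ the value $N_\al(a)$ is exactly the degree of $a$ in $G(\Om)$; this is finite by assumption, and zero on $\tal \setminus A$. Each summand $a \mapsto \1_\Om(a+n\al)$ is measurable on $[0,\al)$, and the partial sums are finite integer-valued measurable functions converging pointwise to $N_\al$. Hence $N_\al$ is measurable, and $A_k = N_\al^{-1}(\{k\})$ is a measurable subset of $A$. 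The argument for $B_k$ is identical with the roles of $\al$ and $\be$ swapped.

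For part \ref{mesLem.3}, I would observe that, under the edge-with-point identification of \remref{omegaEdgesId}, an edge $e(x) = (\pal(x), \pbe(x))$ is incident to a vertex in $S \sbt A$ if and only if $\pal(x) \in S$. Consequently
\begin{equation*}
E(S) = \Om \cap \pal^{-1}(S) = \Om \cap \bigcup_{n \in \Z}(\widetilde{S} + n\al),
\end{equation*}
where $\widetilde{S} \sbt [0,\al)$ is the representative of $S$. This is the intersection of the measurable set $\Om$ with a countable union of translates of a measurable set, hence measurable. The statement for $S \sbt B$ is symmetric.

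Part \ref{mesLem.2} is the step I expect to be the most delicate; my strategy is to rewrite the star condition as a property of a single auxiliary subset of $\Om$. By definition, $a \in A_*$ iff $a \in A$ and every neighbor of $a$ lies in $B_1$. Using the edge-to-point identification, this is the same as requiring that every $x \in \Om$ with $\pal(x) = a$ has $\pbe(x) \in B_1$. Equivalently, letting
\begin{equation*}
T := \{ x \in \Om : \pbe(x) \notin B_1 \},
\end{equation*}
we have $A \setminus A_* = \pal(T)$. Now $B_1$ is measurable by part \ref{mesLem.1} applied on the $\be$-side; hence $T$ is a measurable subset of $\Om$; and the image $\pal(T) = \bigcup_{k \in \Z} \pal(T \cap [k\al,(k+1)\al))$ is a countable union of measurable subsets of $\tal$, so it is measurable. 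Thus $A_* = A \setminus \pal(T)$ is measurable, and the same argument with roles swapped handles $B_*$. Together these three steps establish all the assertions of the lemma, and they do so with exactly the ingredients needed later to show that each iteratively leaves-removed set $\Om^{(n)}$ remains measurable, as required in \remref{remE11.2}.
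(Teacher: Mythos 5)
Your argument is correct and takes essentially the same route as the paper: part \ref{mesLem.1} via the degree function $\pal(\1_\Om)$ (your $N_\al$), part \ref{mesLem.3} via the identity $E(S) = \pal^{-1}(S) \cap \Om$, and part \ref{mesLem.2} via the set identity $A_* = A \setminus \pal\bigl(\Om \cap \pbe^{-1}(B \setminus B_1)\bigr)$, which is exactly your $A \setminus \pal(T)$ once one notes $\pbe(x) \in B$ for $x \in \Om$. The only difference is that you spell out why $\pal$ maps measurable subsets of $\Om$ to measurable subsets of $\tal$ (decomposing into translates of fundamental domains), a point the paper leaves implicit.
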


\begin{proof}
If $a$ is a vertex in $A$, then 
the degree of $a$ in the graph $G(\Om)$ is equal to $\pal(\1_\Om)(a)$.
Hence $\pal(\1_\Om)$ is an everywhere finite, measurable function on $\tal$.
Since for each $k$ the set $A_k$ 
 is the preimage of $\{k\}$ under this function,  it follows that $A_k$ is measurable.
 
By a similar argument, also the set $B_k$ is measurable  for each $k$.

 Next we observe that
  \begin{equation}
\label{AsBsMes}
A_* = A\setminus \pal(\Om \cap \pbe^{-1}(B \setminus B_1 )),
\quad 
B_* = B\setminus \pbe(\Om \cap \pal^{-1}(A \setminus A_1 )),
\end{equation}
hence both sets $A_*$ and $B_*$ are measurable.

Finally, let $S$ be  a measurable subset of $A$. 
Identifying the edges  of the graph 
with elements of $\Omega$, we have
$E(S) = \pal^{-1}(S) \cap \Om$, hence $E(S)$ is measurable.
Similarly, for any measurable   set
$S \sbt B$, the set $E(S) = \pbe^{-1}(S) \cap \Om$
 is measurable.
\end{proof}

\begin{remark}
Recall from \remref{remE11.2} that
 the iterative leaves removal process 
 induces a sequence of sets
 $\Omega^{(n)} \sbt \R$,  satisfying
 $\Omega^{(n+1)} \subset \Omega^{(n)} \subset \Om$ for all $n$, and
 such that   the leaves-removal-graph-sequence
 $ G^{(n)}(\Omega)$ is given by
$G^{(n)}(\Omega) = G(\Omega^{(n)})$.
It follows from \lemref{mesLem}
that if $\Om$ is a measurable subset of $\R$, then all the sets
 $\Omega^{(n)}$ are measurable too,
 since the set of edges removed at each step of the
 iterative process   is measurable.
\end{remark}

For a vertex $a \in A$ we denote by
 $\deg_A(a)$ the degree of $a$
in the graph $G(\Om)$. Similarly, we 
denote by $\deg_B(b)$  the degree of a vertex $b \in B$.
Then 
 $\deg_A$ and $\deg_B$ are nonnegative,
 integer-valued functions on $A$ and $B$ respectively.

\begin{lem}[edge counting lemma]
\label{mesLemRN}
$\deg_A$ is a measurable function on $A$.
Moreover, 
 for any measurable set of vertices $S \subset A$ we have
 \begin{equation}
\label{eqFX2.1}
m(E(S)) = \int_S \deg_A.
\end{equation}
In particular,
 \begin{equation}
\label{eqFX2.2}
m(E(A_k)) = k \cdot m(A_k), \quad  k=1,2,3, \dots
\end{equation}
Similar assertions hold for $\deg_B$ and $B_k$.
\end{lem}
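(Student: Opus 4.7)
The plan is to proceed in three steps: establish measurability of $\deg_A$, prove the integral identity by a Tonelli-type unfolding along $\alpha\Z$, and then specialize to $S = A_k$.

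First, I would identify $\deg_A$ as the pushforward of $\mathbf{1}_\Omega$ to $\tal$. By definition, for $a \in A$, the degree $\deg_A(a)$ counts the preimages of $a$ under $\pal : \Om \to A$, that is, $\deg_A(a) = \#\{x \in \Om : \pal(x) = a\} = \sum_{k \in \Z} \mathbf{1}_\Om(a + k\al)$, interpreting $a$ via a representative in $[0,\al)$. Hence $\deg_A = \pal(\mathbf{1}_\Om)$, which, by the finite degrees assumption, is an everywhere finite, measurable function on $\tal$ — this was already noted inside the proof of the measurability lemma above.

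Second, for the integral formula, let $S \subset A$ be measurable. Using the identification of the edge set $E$ with $\Om \subset \R$, we have $E(S) = \pal^{-1}(S) \cap \Om$, and consequently
\[
m(E(S)) = \int_\R \mathbf{1}_{\pal^{-1}(S)}(x)\,\mathbf{1}_\Om(x)\,dx = \int_\R \mathbf{1}_S(\pal(x))\,\mathbf{1}_\Om(x)\,dx.
\]
The integrand is nonnegative and measurable, so by Tonelli we may unfold along $\al\Z$: partitioning $\R$ into the translates $[0,\al) + k\al$ and using that $\mathbf{1}_S \circ \pal$ is $\al$-periodic gives
\[
m(E(S)) = \int_{[0,\al)} \mathbf{1}_S(a) \sum_{k \in \Z} \mathbf{1}_\Om(a + k\al)\,da = \int_{\tal} \mathbf{1}_S(a)\,\deg_A(a)\,da = \int_S \deg_A,
\]
where in the middle equality I use the canonical identification of $\tal$ with $[0,\al)$ under which the Lebesgue measure is preserved (this is precisely the normalization $m(\tal) = \al$ fixed in the preliminaries).

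Third, the formula \eqref{eqFX2.2} is immediate: on the set $A_k$ the function $\deg_A$ is constantly equal to $k$, so $m(E(A_k)) = \int_{A_k} \deg_A = k \cdot m(A_k)$. The argument for $\deg_B$ and $B_k$ is identical, interchanging the roles of $\al$ and $\be$. There is no real obstacle here; the only technical point worth being careful about is the measure-theoretic identification of $E$ with $\Om$ and of $A \subset \tal$ with a set of representatives in $[0,\al)$, both of which are Lebesgue-measure-preserving, so the Tonelli unfolding gives exactly the claimed identity.
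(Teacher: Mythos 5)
Your proof is correct and follows essentially the same route as the paper: identify $E(S) = \pal^{-1}(S)\cap\Om$, compute $m(E(S))$ by folding the integral over $\R$ into an integral over $\tal$, and observe that the resulting integrand is $\deg_A\cdot\1_S$. The paper phrases the middle step as the general identity $\int_\R g = \int_{\tal}\pal(g)$ applied to $g=\1_{E(S)}$, while you spell out the underlying Tonelli partition of $\R$ into $\al$-translates of $[0,\al)$ — the same argument, just made more explicit.
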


Notice that the integral in \eqref{eqFX2.1} may be finite or infinite,
but in any case it has a well-defined value,
since $\deg_A$ is a nonnegative function.

\begin{proof}[Proof of \lemref{mesLemRN}]
Let $S \subset A$ be a measurable set.
By identifying the edges  of the graph $G(\Om)$
with elements of $\Omega$, we have
$E(S) = \pal^{-1}(S) \cap \Om$. Then
 \begin{equation}
\label{eqFX15.1}
m(E(S)) = \int_{\R} \1_{E(S)} = \int_{\tal} \pal(\1_{E(S)})
\end{equation}
(these equalities hold both if $E(S)$ has finite or infinite measure).
But notice that for a vertex $a \in A$ we have
 \begin{equation}
\label{eqFX15.2}
\pal(\1_{E(S)})(a) = 
\begin{cases}
\deg_A(a), & a \in S, \\[4pt]
0, & a \notin S.
\end{cases}
\end{equation}
Thus \eqref{eqFX15.1} and
\eqref{eqFX15.2}  imply \eqref{eqFX2.1}.
Finally \eqref{eqFX2.2} is a consequence
of \eqref{eqFX2.1}, since the function
$\deg_A$ attains the constant value $k$ on the set $A_k$.
\end{proof}

\begin{remark}
Let $\mu_A$ be the measure on $A$ obtained as the image under the map $\pal$
of the Lebesgue measure restricted to $\Om$. The
assertion of \lemref{mesLemRN} may be equivalently 
stated by saying that
$\deg_A$ is the Radon-Nikodym derivative of $\mu_A$
with respect to the Lebesgue measure on $A$.
\end{remark}

\subsection{A brief digression: Measure preserving graphs (graphings)}

The graph $G(\Om)$ endowed with its measure space structure
is closely related to the notion of a \emph{measure preserving graph}, or a \emph{graphing},
so we will discuss this relation briefly here.
For  a detailed exposition we  refer to the book 
by Lov\'{a}sz \cite[Chapter 18] {Lov12}.

A \emph{Borel graph} is a graph $(V,E)$ where the vertex set $V$ is a standard Borel space (i.e.\ the measure space associated to a separable, complete metric space), and the edge set $E$ is a Borel set in 
$V \times V$. One can show that if
$\Om \sbt \R$ is a Borel set, then the induced graph $G(\Om)$ is a Borel graph.

A \emph{measure preserving graph}, or a  \emph{graphing}, is a Borel graph $(V,E)$  whose vertex set $V$ is endowed with a probability measure $\lambda$, such that for any two measurable sets $U,W \sbt V$ we have
 \begin{equation}
\label{eqFX3.1}
\int_U n_W(x)d\lambda(x) = \int_W n_U(x)d\lambda(x),
\end{equation}
where $n_U(x)$ and $n_W(x)$ denote the number of neighbors of $x$ within the sets $U$  and $W$ respectively.
The last condition relates the graph structure to the measure space structure by requiring that ``counting'' the edges between $U$ and $W$ from $U$, yields the same result as counting them from  $W$.
One can show based on \lemref{mesLemRN} that the graph $G(\Om)$
satisfies the condition \eqref{eqFX3.1}.

We point out however that in \cite{Lov12}  the notion of a graphing includes the additional assumption that the degrees of the vertices in the graph are bounded by a certain constant. To the contrary, for the graph $G(\Om)$ we only assume that each vertex has finite degree, allowing the existence of vertices with arbitrarily large degrees.

\subsection{The graph induced by a simultaneously tiling function}

Let now $f$ be a measurable function on $\R$, and
consider the graph $G(\Om)=(A,B,E)$ induced
by the set $\Om := \supp(f)$.
By identifying the edges  of the graph
with elements of the set $\Omega$ (as in
 \remref{omegaEdgesId}) we may view 
 $f$ as a  function on the set of edges of the graph.
Thus  $G(\Omega)$  becomes a weighted graph,
 with the weight function $f$.

\begin{lem}
  \label{lemFinDegTile}
Let $f$ be a measurable function on $\R$, $\mes(\supp f) < + \infty$, and assume that
 $f$ satisfies the simultaneous tiling condition \eqref{eqA2.1.1}.
Then $f$ can be redefined on a set of measure zero
so as to satisfy also the following two additional properties:

\begin{enumerate-roman}
\item
  \label{lemK1.1.1}
The set  $\Om := \supp f$
satisfies the finite degrees assumption;

\item
  \label{lemK1.1.2}
  If the induced   graph $G(\Om)=(A,B,E)$ is weighted by the function $f$, then
each vertex from $A$   has weight $p$, while each vertex from $B$  has weight $q$.
\end{enumerate-roman}
\end{lem}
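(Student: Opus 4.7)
The plan is to produce a single null set $N \sbt \R$ that is invariant under translation by both $\al \Z$ and $\be \Z$, outside of which all of the desired pointwise tiling identities hold, and then redefine $f$ to vanish on $N$.

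First I will collect the relevant null sets. The tiling assumption \eqref{eqA2.1.1} gives a measurable $\al$-periodic null set
\[
N'_\al := \Big\{ x \in \R : \textstyle\sum_k |f(x-k\al)| = \infty \text{ or } \textstyle\sum_k f(x-k\al) \neq p \Big\}
\]
and, symmetrically, a $\be$-periodic null set $N'_\be$. To control cardinalities of coset intersections, I will use the hypothesis $\mes(\Om) < \infty$: integrating the $\al$-periodic function $x \mapsto \sum_k \1_\Om(x-k\al)$ over one period yields $\mes(\Om)$, so
\[
N_\al := \big\{ x \in \R : |\Om \cap (x+\al\Z)| = \infty \big\}
\]
is an $\al$-periodic null set, and likewise for $N_\be$. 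Set $N_0 := N_\al \cup N_\be \cup N'_\al \cup N'_\be$, which is measurable and of measure zero.

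Next I will pass from $N_0$ to its saturation under the countable subgroup $\al \Z + \be \Z \sbt \R$:
\[
N := N_0 + \al \Z + \be \Z = \bigcup_{j,k \in \Z} \big( N_0 + j\al + k\be \big).
\]
As a countable union of null sets, $N$ is still null and measurable; by construction it is invariant under translations by both $\al \Z$ and $\be \Z$. Define $\tilde f := f \cdot \1_{\R \setminus N}$; this redefines $f$ only on the null set $\Om \cap N$ and $\tilde \Om := \supp \tilde f = \Om \setminus N$.

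It then remains to verify properties \ref{lemK1.1.1} and \ref{lemK1.1.2} for $\tilde f$. For the finite degrees: for any $x \in \R$, if $x \in N$ then by $\al$-invariance the whole coset $x + \al \Z$ lies in $N$, so $\tilde \Om \cap (x + \al \Z) = \emptyset$; otherwise $x + \al \Z$ is disjoint from $N \supset N_\al$, hence $\tilde \Om \cap (x + \al \Z) = \Om \cap (x + \al \Z)$ which is finite. The same works on the $\be$-side. For the vertex weights: given $a \in \tilde A$, pick any $x \in \tilde \Om$ with $\pal(x) = a$. Then $x \notin N$, so the coset $x + \al \Z$ avoids $N$ (using $\al$-invariance once more), which means $\tilde f$ and $f$ agree on this coset; since also $x \notin N'_\al$, the weight of $a$ in $G(\tilde \Om)$ equals
\[
\sum_{y \in \tilde \Om \cap (x+\al\Z)} \tilde f(y) \;=\; \sum_{k \in \Z} f(x - k \al) \;=\; p,
\]
and the analogous computation with $\be$ in place of $\al$ shows every vertex in $\tilde B$ has weight $q$. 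The main technical obstacle to anticipate is precisely the one handled by the saturation step: naively zeroing out $f$ on a null set could destroy the absolute convergence or the value of the sums at good points $x \notin N_0$, because the coset $x + \al \Z$ might still hit $N_0$. Replacing $N_0$ by its $(\al \Z + \be \Z)$-invariant saturation cures this by guaranteeing that every surviving coset is entirely disjoint from the set on which $f$ was altered.
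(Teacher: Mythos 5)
Your proof is correct and takes essentially the same approach as the paper. The paper defines the good set $X_0$ where all four pointwise conditions hold and passes to the invariant core $X=\bigcap_{j,k\in\Z}(X_0+j\al+k\be)$, whereas you work with the complementary bad null set $N_0=\R\setminus X_0$ and saturate it to $N=N_0+\al\Z+\be\Z$; since $\R\setminus N=X$, the two constructions yield the same redefined function and the same verification.
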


\begin{proof}
Denote the given function by $f_0$, and let
$\Om_0 := \supp f_0$. 
Let $X_0$ be the set of all points $x \in \R$ satisfying  
the   conditions
\begin{equation}
\label{eqK1.1.3}
\sum_{k \in \Z} \1_{\Om_0}(x-k\al) < +\infty,
\quad
\sum_{k \in \Z}f_0(x-k\al)=p,
\end{equation}
as well as the conditions 
\begin{equation}
\label{eqK1.1.3.5}
\sum_{k \in \Z}\1_{\Om_0}(x-k\be) < +\infty,
\quad
\sum_{k \in \Z}f_0(x-k\be)=q.
\end{equation}
The assumptions  imply that $X_0$ is a set of full measure in $\R$. 
Then also the set 
\begin{equation}
\label{eqK1.1.4}
X := \bigcap_{(n,m) \in \Z^2} (X_0 + n\al + m\be)
\end{equation}
 has full measure in $\R$. We define
 $f := f_0 \cdot \1_X$, then $f$ coincides with $f_0$ a.e.
We will show that the new function $f$ satisfies  the two additional conditions 
\ref{lemK1.1.1} and \ref{lemK1.1.2}.

Let $G(\Om)=(A,B,E)$  be the graph induced by the set 
 $\Om := \supp f = \Om_0 \cap X$.
 We first verify the condition 
\ref{lemK1.1.1}, namely, we show that  each vertex of $G(\Om)$
has finite degree. Indeed, let $a \in A$, then $a=\pal(x)$
 for some $x \in \Om$, and the degree of $a$ is the 
 number of elements in the set $\Om \cap (x + \al \Z)$.
 But this set has finitely many elements, which follows
 from the first condition in \eqref{eqK1.1.3} using the fact that
 $\Om \sbt \Om_0$ and $x \in \Om \sbt X_0$.
 Hence each vertex $a \in A$ has finite degree in the graph $G(\Om)$. 
 Similarly, each vertex $b \in B$  also has finite degree.

Now let the graph $G(\Om)$ be weighted by the function $f$.
We show that condition \ref{lemK1.1.2} holds. Indeed,
let $a \in A$, then again $a=\pal(x)$
 for some $x \in \Om$. Since $\Om \sbt X$ and 
the set $X$ is  invariant under translations by elements
from $\al \Z$, we have
 $x + \al \Z \sbt X$ and  thus $f$ coincides
 with $f_0$ on the set 
  $x + \al \Z$. This implies that
 $\pal(f)(a) = \pal(f_0)(a) = p$,
where the last equality  follows from the second
condition in  \eqref{eqK1.1.3} using the fact that
$x \in \Om \sbt X_0$.
But  $\pal(f)(a)$ is exactly the weight of the vertex $a$ in the graph $G(\Om)$,
 hence the vertex $a$ has weight $p$.
 The proof that each 
vertex $b \in B$ has weight $q$ is similar.
\end{proof}

In what follows, we  assume that 
$f$ is a measurable function on $\R$
satisfying the simultaneous tiling condition \eqref{eqA2.1.1}.
Since our goal is to obtain a lower bound for the 
measure of the support of $f$, we assume that 
$\Om := \supp f$ is a set of finite measure.

We endow the graph $G(\Om)=(A,B,E)$ 
with the weight function $f$. By redefining the
 values of  $f$ on a set of measure zero
 (using  \lemref{lemFinDegTile})
we can assume with no loss of generality  that 
every vertex in the graph has finite degree,
and that the vertices from $A$   have weight $p$,
 while the vertices from $B$  have weight $q$.

We will also assume that the tiling levels $p$ and $q$ 
in  \eqref{eqA2.1.1} are both nonzero 
(the case where one of $p,q$ is zero is covered by
\thmref{thmA2.6}). This implies that the supports of 
the functions
$\pal(f)$ and $\pbe(f)$  coincide with $\tal$ 
and $\tbe$ respectively 
up to a set of measure zero. Hence  
\begin{equation}
 \label{eqFX4.1}
m(A) = \al, \quad m(B) = \be.
\end{equation}

\subsection{The Euler characteristic}

Recall that the set $E$ of edges of the graph $G(\Om)$ is endowed with a
measure space structure, induced from 
the identification of $E$ with $\Om$ as a measurable subset of $\R$
(\remref{omegaEdgesId}).  In particular, $m(E) = m(\Om) < + \infty$.

\begin{definition}[Euler characteristic]
The quantity
\begin{equation} \label{eqFX5.1}
\euler    =  m(A)+m(B)-m(E)
\end{equation}
will be called the \emph{Euler characteristic} of the graph  $G(\Om)=(A,B,E)$.
\end{definition}

We call this quantity the ``Euler characteristic'' since
it is the difference between the total
measure of the vertices in the graph and the total measure of the edges.

Similarly, we let
\begin{equation} \label{eqFX5.2}
\euler^{(n)}  = m(A^{(n)})+m(B^{(n)})-m(E^{(n)})
\end{equation}
 denote the Euler characteristics of the
leaves-removal-graph-sequence $ G^{(n)}(\Omega)$.

Let ${L}^{(n)}$ be the set of leaves removed at the $n$'th step
of the iterative leaves removal process, that is, if $n$ is even then ${L}^{(n)} = A^{(n)}_1$ (the set of leaves in $A^{(n)}$), and if $n$ is odd then ${L}^{(n)} = B^{(n)}_1$ (the set of leaves in $B^{(n)}$). 
The next lemma gives a lower bound for 
the measure of the set  ${L}^{(n)}$ in terms of the Euler characteristic $\euler^{(n)}$.

\begin{lem}[removed leaves estimate]
\label{leavesBound} 
Assume that $\al > \be$. Then
\begin{equation} \label{eqFX5.5}
m({L}^{(0)}) > \euler^{(0)},
\end{equation}
and for all $n \geq 1$ we have
\begin{equation} \label{eqFX5.6}
m({L}^{(n)}) \geq 2\euler^{(n)}.
\end{equation}
\end{lem}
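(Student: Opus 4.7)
The plan is to derive a master inequality for $\euler^{(n)}$ in terms of the leaf measures, and then treat the base case $n=0$ and the inductive cases $n \geq 1$ separately. The key input is the edge counting lemma, which supplies two expressions for the edge measure:
\[
m(E^{(n)}) = \sum_{k \geq 1} k\,m(A^{(n)}_k) = \sum_{k \geq 1} k\,m(B^{(n)}_k).
\]
Averaging them and substituting into the definition of the Euler characteristic gives the identity
\[
\euler^{(n)} = \sum_{k \geq 1}\bigl(1-\tfrac{k}{2}\bigr)\bigl(m(A^{(n)}_k)+m(B^{(n)}_k)\bigr).
\]
The coefficient $(1-k/2)$ equals $\half$ at $k=1$, vanishes at $k=2$, and is strictly negative for $k \geq 3$, producing the universal upper bound
\[
\euler^{(n)} \leq \half\bigl(m(A^{(n)}_1)+m(B^{(n)}_1)\bigr).
\]

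For the base case $n=0$ one has to sharpen this, because $L^{(0)} = A_1$ uses only \emph{one} side. Here I would invoke $p \neq 0$ (the case $p=0$ is covered by \thmref{thmA2.6}), which forces $m(A) = \al$, and repeat the averaging trick only on the $A$-side to obtain
\[
m(A_1) = 2m(A) - m(E) + \sum_{k \geq 3}(k-2)m(A_k) \geq 2\al - m(E).
\]
Since $\euler^{(0)} = \al + \be - m(E)$, subtracting yields $m(L^{(0)}) - \euler^{(0)} \geq \al - \be$, which is strictly positive by the hypothesis $\al > \be$, establishing \eqref{eqFX5.5}.

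For $n \geq 1$, the strategy is to argue that exactly one of the sets $A^{(n)}_1$, $B^{(n)}_1$ is empty --- namely, the side \emph{opposite} to the side from which $L^{(n)}$ is drawn. This should follow from a short direct inspection of the leaves removal rule: any vertex that survives on the side just cleaned of its leaves keeps all of its previous neighbors, because the only vertices removed on the opposite side are star vertices, whose neighbors are all leaves and therefore distinct from the surviving vertex. Hence its degree in $G^{(n)}$ equals its degree in $G^{(n-1)}$, which is at least $2$.

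With one of the two terms on the right-hand side of the master inequality forced to vanish, we conclude $\euler^{(n)} \leq \half\,m(L^{(n)})$, which is exactly \eqref{eqFX5.6}. The main delicate point is the inductive bookkeeping --- verifying that no ``stray'' leaves survive on the wrong side after each removal step, which is where the star-vertex clean-up built into \defref{leavRem} plays its crucial role.
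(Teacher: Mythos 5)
Your proposal is correct and follows essentially the same route as the paper: your master inequality $\euler^{(n)} \leq \tfrac12\bigl(m(A^{(n)}_1)+m(B^{(n)}_1)\bigr)$ is exactly the paper's Lemma~\ref{lemLeavsExist} (the averaging of the two edge-counting identities is just a repackaging of the proof of Lemma~\ref{lemJ1.1}), and both the base case (sharpened one-sidedly using $m(A)=\al>\be$) and the case $n\geq 1$ (using that the side just cleaned of leaves has none, because removed opposite-side vertices are star vertices and hence not adjacent to any surviving, non-leaf vertex) are the same as in the paper's proof. The only slight overstatement is that ``exactly one'' of $A^{(n)}_1, B^{(n)}_1$ is empty — both could be — but since you only need the opposite side to be leaf-free, this does not affect the argument.
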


The assumption   that $\al > \be$ 
can be made with no loss of generality,
for otherwise we may simply interchange the roles of  $\al$ and $\be$.
The reason we   need to make this assumption  is that we 
have chosen to  begin  the iterative leaves removal process by
removing leaves from the $A$-side.
(If we had $\be > \al$ then the process 
would have to begin  by
removing leaves from the $B$-side.)

To prove \lemref{leavesBound}  we will first establish two additional lemmas. 
The first one gives a lower bound for the measures
 of the sets of leaves $A_1$ and $B_1$.

\begin{lem}
  \label{lemJ1.1}
  We have
  \begin{equation} \label{eqFX5.8}
  m(A_1) \geq 2 m(A)-m(\Omega),
\end{equation}
  and similarly,
  \begin{equation} \label{eqFX5.9}
  m(B_1) \geq 2 m(B)-m(\Omega).
\end{equation}
\end{lem}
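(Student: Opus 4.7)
The plan is to derive both inequalities by a direct degree-counting argument, using the partition of $A$ (and $B$) by vertex degree together with the edge counting lemma (\lemref{mesLemRN}).

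First I would partition $A$ as the disjoint union $A = \bigsqcup_{k\geq 1} A_k$, where $A_k$ is the set of vertices of degree $k$ in $A$. Since the graph $G(\Omega)$ has no isolated vertices, this partition covers all of $A$. Correspondingly, the edge set partitions as $E = \bigsqcup_{k\geq 1} E(A_k)$, because every edge is incident to exactly one vertex in $A$. By \lemref{mesLem} all the relevant sets are measurable, so we may take measures.

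Next I would apply the edge counting lemma to write
\begin{equation*}
m(A) = \sum_{k \geq 1} m(A_k), \qquad m(\Omega) = m(E) = \sum_{k \geq 1} m(E(A_k)) = \sum_{k \geq 1} k\, m(A_k).
\end{equation*}
Subtracting, this yields
\begin{equation*}
2m(A) - m(\Omega) = \sum_{k \geq 1} (2-k)\, m(A_k) = m(A_1) + \sum_{k \geq 2} (2-k)\, m(A_k).
\end{equation*}
Since $2-k \leq 0$ for every $k \geq 2$ and $m(A_k) \geq 0$, the tail sum is nonpositive, giving $2m(A) - m(\Omega) \leq m(A_1)$, which is exactly \eqref{eqFX5.8}. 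The inequality \eqref{eqFX5.9} follows by the completely symmetric argument, exchanging the roles of $A$ and $B$ (and using the partition $E = \bigsqcup_{k \geq 1} E(B_k)$).

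There is really no main obstacle here: once one has the edge counting identity $m(E(A_k)) = k\, m(A_k)$ from \lemref{mesLemRN}, the inequality is just the observation that only vertices of degree $1$ contribute positively to $2m(A) - m(\Omega)$. The estimate is sharp precisely when every vertex in $A$ has degree $1$ or $2$, which is exactly the situation one expects to encounter when analyzing the tail of the iterative leaves-removal process in \lemref{leavesBound}.
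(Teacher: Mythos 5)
Your proof is correct and is essentially the same as the paper's: both partition $A$ by degree, invoke the edge-counting lemma to get $m(\Omega)=\sum_k k\,m(A_k)$, and then observe that the terms with $k\geq 2$ have the right sign. The only difference is cosmetic (you rearrange to bound $2m(A)-m(\Omega)$ directly, while the paper bounds $2m(A)-m(A_1)\leq m(\Omega)$), so there is nothing further to compare.
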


\begin{proof}
Recall that we denote by $A_k$  the set of vertices in $A$ of degree $k$. 
Since the sets $A_k$   form a partition of $A$,  we have
\begin{equation}
\label{eqJ1.1.11}
m(A) = \sum_{k=1}^{\infty} m(A_k).
\end{equation}
In turn, the
 sets $E(A_k)=\pal^{-1}(A_k) \cap \Omega$ 
 form a partition of $\Omega$, and by  \lemref{mesLemRN}
we have $m(E(A_k))=  k   m(A_k)$. Hence
\begin{equation}
\label{eqJ1.1.12}
m(\Omega) = \sum_{k=1}^{\infty} m(E(A_k)) = \sum_{k=1}^{\infty}k m(A_k).
\end{equation}
Using \eqref{eqJ1.1.11} and \eqref{eqJ1.1.12} we conclude that
\begin{equation}
\label{eqJ1.1.3}
2 m(A) - m(A_1) 
= m(A_1) + 2 \sum_{k=2}^{\infty} m(A_k)
\leq \sum_{k=1}^{\infty} k m(A_k) = m(\Om),
\end{equation}
which proves \eqref{eqFX5.8}. The inequality \eqref{eqFX5.9} can 
be proved  in a similar way.
\end{proof}

The next lemma is a more symmetric version of the previous one.

\begin{lem}
\label{lemLeavsExist}
We have
\begin{equation}
\label{eqJ1.1.22}
m(A_1) + m(B_1) \geq 2 \euler.
\end{equation}
\end{lem}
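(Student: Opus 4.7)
The plan is that this lemma is an immediate consequence of the previous one (\lemref{lemJ1.1}) combined with the observation that $m(E) = m(\Omega)$, which holds because the edge set $E$ was endowed with the measure space structure coming from its identification with $\Omega \sbt \R$ (see \remref{omegaEdgesId}).

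Concretely, I would simply add the two inequalities supplied by \lemref{lemJ1.1}. The first gives $m(A_1) \geq 2m(A) - m(\Omega)$ and the second gives $m(B_1) \geq 2m(B) - m(\Omega)$. Summing yields
\[
m(A_1) + m(B_1) \;\geq\; 2m(A) + 2m(B) - 2m(\Omega) \;=\; 2\bigl(m(A) + m(B) - m(E)\bigr) \;=\; 2\euler,
\]
which is exactly \eqref{eqJ1.1.22}. There is no real obstacle here; the entire content of the lemma is packaged in \lemref{lemJ1.1}, and this statement is a symmetric reformulation obtained by combining its two halves. The only point to verify (and it is essentially tautological from the setup) is the identification $m(E) = m(\Omega)$, so that $\chi$ as defined in \eqref{eqFX5.1} equals $m(A)+m(B)-m(\Omega)$.

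I would expect the author to present this as a one-line corollary of \lemref{lemJ1.1}, perhaps remarking that the reason for stating it separately is that this symmetric form $m(A_1)+m(B_1) \geq 2\chi$ is the bound actually needed later (likely in the proof of \lemref{leavesBound}, specifically to control $m(L^{(n)})$ for $n \geq 1$ where leaves are taken alternately from both sides).
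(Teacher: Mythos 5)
Your proof is correct and is essentially identical to the paper's: the paper likewise derives \eqref{eqJ1.1.22} as an immediate consequence of \lemref{lemJ1.1} by summing \eqref{eqFX5.8} and \eqref{eqFX5.9}, using $m(E)=m(\Omega)$. Your remarks about why the symmetric form is stated separately (for later use in \lemref{leavesBound}) also match the paper's structure.
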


In other words, the measure of the set of leaves
 in the graph $G(\Om)$,  both from $A$ and from $B$, 
 is at least $2\euler$. 
This is an immediate consequence of \lemref{lemJ1.1}. Indeed, 
taking the sum of \eqref{eqFX5.8} and \eqref{eqFX5.9} yields
\begin{equation}
\label{eqJ5.1}
m(A_1) + m(B_1) \geq 2m(A) - m(\Om) + 2m(B) - m(\Om) =2\euler.
\end{equation}

Now we can prove \lemref{leavesBound} based on the previous two lemmas.

\begin{proof}[Proof of \lemref{leavesBound}]
Recall from \eqref{eqFX4.1} that we have
$m(A) = \al$, $m(B) = \be$, and  that we
have assumed $\al > \be$. Hence using \lemref{lemJ1.1}
 we obtain
 \begin{equation}
m({L}^{(0)}) \geq 2\al - m(\Om)> \al +\be -m(\Om) =\euler^{(0)},
 \end{equation}
and so \eqref{eqFX5.5} is proved. Next,
for  $n\geq 1$ we  apply \lemref{lemLeavsExist} to 
the graph $G^{(n)}(\Om)$. The lemma gives
 \begin{equation}
\label{eqFX5.25}
m(A^{(n)}_1) + m(B^{(n)}_1) \geq 2 \euler^{(n)}.
 \end{equation}
However we observe that for $n \geq 1$, the set of leaves
$B^{(n)}_1$ is empty if $n$ is even, and
$A^{(n)}_1$ is empty if $n$ is odd, 
due to the removal of the leaves  in 
the previous step of the iterative leaves removal process. 
Hence \eqref{eqFX5.6} follows from
\eqref{eqFX5.25}.
\end{proof}

\begin{lem}[monotonicity]
\label{lemMonoton}
For every $n$ we have
 \begin{equation}
\label{eqFX5.17}
\euler^{(n+1)}\leq \euler^{(n)}.
 \end{equation}
\end{lem}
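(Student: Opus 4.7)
The plan is to compute explicitly how each of $m(A)$, $m(B)$, and $m(E)$ changes when we pass from $G^{(n)}(\Om)$ to $G^{(n+1)}(\Om)$, and to verify that the change in $\euler$ is nonpositive. By symmetry it suffices to consider the case where the step in question consists of removing leaves from the $A$-side (i.e.\ $n$ is even); the odd case is identical with the roles of $A$ and $B$ swapped.

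Write $A = A^{(n)}$, $B = B^{(n)}$, $E = E^{(n)}$, and let $A_1 \sbt A$ denote the set of leaves in $A$ and $B_* \sbt B$ the corresponding set of star vertices. By the definition of the $A$-leaves-removal operation we have
\begin{equation*}
A^{(n+1)} = A \setminus A_1, \quad B^{(n+1)} = B \setminus B_*, \quad E^{(n+1)} = E \setminus E(A_1),
\end{equation*}
and by the measurability lemma all the sets appearing here are measurable, so the quantities $m(A^{(n+1)})$, $m(B^{(n+1)})$, $m(E^{(n+1)})$ are well defined. Consequently
\begin{equation*}
m(A^{(n+1)}) = m(A) - m(A_1), \quad m(B^{(n+1)}) = m(B) - m(B_*).
\end{equation*}

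The key input is the edge counting lemma applied to the set $A_1$: since every vertex in $A_1$ has degree exactly $1$ in the graph $G^{(n)}(\Om)$, we obtain $m(E(A_1)) = 1 \cdot m(A_1) = m(A_1)$, and therefore
\begin{equation*}
m(E^{(n+1)}) = m(E) - m(A_1).
\end{equation*}
Substituting these three identities into the definition of $\euler^{(n+1)}$ gives
\begin{align*}
\euler^{(n+1)} &= m(A^{(n+1)}) + m(B^{(n+1)}) - m(E^{(n+1)}) \\
&= \bigl(m(A) - m(A_1)\bigr) + \bigl(m(B) - m(B_*)\bigr) - \bigl(m(E) - m(A_1)\bigr) \\
&= \euler^{(n)} - m(B_*),
\end{align*}
so the decrease in the Euler characteristic is exactly $m(B_*) \geq 0$, and \eqref{eqFX5.17} follows.

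I expect no serious obstacle here: the argument is a one-line accounting computation once the edge counting lemma is invoked with $k=1$. The only thing one must be a little careful about is that removing the leaves $A_1$ also forces the removal of the star vertices $B_*$, and that the edges incident to these star vertices are already contained in $E(A_1)$ (hence have been counted once in the removed edges); this ensures the $m(A_1)$ term cancels cleanly and the only net contribution is the $-m(B_*)$ coming from the vertex side.
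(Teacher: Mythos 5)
Your proof is correct and matches the paper's argument essentially verbatim: both compute the differences $m(A^{(n+1)}) - m(A^{(n)})$, $m(B^{(n+1)}) - m(B^{(n)})$, $m(E^{(n+1)}) - m(E^{(n)})$, invoke the edge counting lemma (Lemma~\ref{mesLemRN}) with $k=1$ to identify $m(E^{(n)}(A^{(n)}_1)) = m(A^{(n)}_1)$, and conclude $\euler^{(n+1)} = \euler^{(n)} - m(B^{(n)}_*)$ for even $n$ (and symmetrically for odd $n$). The closing observation about the edges incident to $B_*$ being contained in $E(A_1)$ is a correct and helpful sanity check, though not strictly needed once the algebra has been done.
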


\begin{proof}
Suppose first that $n$ is even. By
the definitions of $\euler^{(n+1)}$ and  $G^{(n+1)}(\Om)$ we have
\begin{align}
 \euler^{(n+1)} &= m(A^{(n+1)}) + m(B^{(n+1)}) - m(E^{(n+1)}) \nonumber  \\
& =(m(A^{(n)}) - m(A^{(n)}_1)) + (m(B^{(n)}) - m(B^{(n)}_*)) - (m(E^{(n)}) - m(E^{(n)}(A^{(n)}_1)))  \nonumber  \\
& = \euler^{(n)} - m(A^{(n)}_1) - m(B^{(n)}_*) + m(E^{(n)}(A^{(n)}_1) ) =
 \euler^{(n)} - m(B^{(n)}_*),
\end{align}
where in the last equality we used 
$m(E^{(n)}(A^{(n)}_1)) = m(A^{(n)}_1)$ 
(\lemref{mesLemRN}).
Hence for even $n$ we have
\begin{equation}
\label{etanEven}
\euler^{(n+1)} =\euler^{(n)} - m(B^{(n)}_*).
\end{equation}
Similarly, for odd $n$ we have
\begin{equation}
\label{etanOdd}
\euler^{(n+1)} =\euler^{(n)} - m(A^{(n)}_*),
\end{equation}
and the inequality \eqref{eqFX5.17} follows.
\end{proof}

\subsection{Jump sets and measure jumps}

Let us denote by $J^{(n)}$ the set $B^{(n)}_*$ if $n$ is even, or the set $A^{(n)}_*$ if $n$ is odd.
The set $J^{(n)}$ will be called a \emph{jump set}.
The equalities \eqref{etanEven} and \eqref{etanOdd}, established in 
the proof of \lemref{lemMonoton}, say that for every $n$ we have
\begin{equation}
\label{eqgapmJn}
m(J^{(n)}) = \euler^{(n)} - \euler^{(n+1)}.
\end{equation}

\begin{definition}[measure jump]
Whenever it happens that $\euler^{(n)} > \euler^{(n+1)}$,
or equi\-valently, whenever  we have 
$m(J^{(n)}) > 0$, 
 we will say that a \emph{measure jump} has occurred.  
\end{definition}

\begin{lem}[finite subtree lemma]
\label{lemJump} Assume that for some $n$ the set $J^{(n)}$ is nonempty (in particular, this is the case if $\euler^{(n)} > \euler^{(n+1)}$). Then for each vertex $v\in J^{(n)}$, the connected component of $v$ in the  graph $G(\Om)$ is a finite tree. Moreover, if $v,w$ are two
distinct vertices in $J^{(n)}$ then their respective connected components in $G(\Om)$ are disjoint.
\end{lem}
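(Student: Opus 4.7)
The plan is to argue by backward induction on the leaves-removal process, exploiting the fact that the connected component of $v$ inside $G^{(n)}$ has an extremely simple (star-shaped) structure, and then tracking how this component grows as we undo each leaves-removal step back to $G(\Om)$.

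For the base case at step $n$, assume $n$ is even, so $v \in B^{(n)}_*$; the odd case is symmetric. By the definition of a star vertex, every neighbor of $v$ in $G^{(n)}$ lies in $A^{(n)}_1$, so $v$ is the unique $G^{(n)}$-neighbor of each such vertex. Hence the connected component of $v$ in $G^{(n)}$ is $\{v\} \cup N_{G^{(n)}}(v)$, a finite star, which is in particular a finite tree. Moreover, if $w \in J^{(n)}$ with $w \neq v$, then $w$ is the center of its own star-component, so distinct elements of $J^{(n)}$ already lie in distinct connected components of $G^{(n)}$.

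The inductive step analyzes the passage from $G^{(m)}$ back to $G^{(m-1)}$. Unpacking \defref{leavRem}, the reinstated vertices split into two kinds: leaves of $G^{(m-1)}$ whose unique neighbor already belongs to $G^{(m)}$, and star vertices of $G^{(m-1)}$ together with their leaf-neighbors. The second kind forms star-components of $G^{(m-1)}$ that are entirely disjoint from $G^{(m)}$, since a star vertex is by definition adjacent only to leaves and both are absent from $G^{(m)}$. Setting $T_k$ to be the connected component of $v$ in $G^{(n-k)}$, this dichotomy yields $T_{k+1}=T_k\cup L_k$ where $L_k$ is a finite set of pendant leaves, each attached to $T_k$ along one single new edge; finiteness of $L_k$ uses both the finiteness of $T_k$ and the finite-degrees assumption on $G(\Om)$. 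Since attaching finitely many pendants to a finite tree yields a finite tree, backward induction shows that each $T_k$ is a finite tree; for $k=n$ this gives the first assertion of the lemma. The second assertion follows from running the same induction for $v$ and $w$ simultaneously: attaching new pendants and reinstating isolated star-components cannot merge two previously distinct components, so the disjointness of $v$'s and $w$'s components in $G^{(n)}$ persists all the way down to $G^{(0)}=G(\Om)$.

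The step requiring the most care is the structural analysis of a single backward leaves-removal step, specifically the verification that a reinstated edge never joins two vertices that already belonged to distinct components of $G^{(m)}$. This relies on the precise defining property of a star vertex: a star vertex of $G^{(m-1)}$ is only adjacent to leaves of $G^{(m-1)}$, so both endpoints of every edge incident to such a star vertex disappear in passing to $G^{(m)}$, producing an isolated new star-component in $G^{(m-1)}$ rather than a bridge joining pre-existing components. Once this dichotomy is firmly in hand, the induction is short and essentially formal.
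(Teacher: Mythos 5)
Your proof is correct and follows essentially the same backward-induction strategy as the paper's proof: starting from the star-component of $v$ in $G^{(n)}$ and showing that undoing each leaves-removal step only adds pendant leaves to the component, so it stays a finite tree, and that distinct components stay distinct. You give more explicit structural detail than the paper (the dichotomy between leaves that attach as pendants to $G^{(m)}$ and isolated star-components, and the observation that every reinstated edge has a leaf endpoint hence cannot bridge two pre-existing components), but this is a more careful articulation of the same mechanism the paper invokes tersely by saying "the connected component of $v$ in the graph $G^{(n-j)}$ is a finite tree (of height at most $j+1$)." No gap.
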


\begin{proof}
By definition, $J^{(n)}$ is either $B^{(n)}_*$ or $A^{(n)}_*$ depending on whether $n$ is even or odd.
We consider the case where $n$ is even (the case where $n$ is odd is similar). Then $J^{(n)} = B^{(n)}_*$ is the set of star vertices in $B^{(n)}$, that is, the
vertices in $B^{(n)}$ all of whose neighbors in the graph $G^{(n)}$ are leaves. 
Recalling that all the degrees of vertices in $G(\Om)$ are finite, it follows that
the connected component of a vertex $v \in J^{(n)}$ in the graph $G^{(n)}$ is a finite tree
(of height one, if we view $v$ as the root of the tree). Since
$G^{(n)}$ was obtained from $G^{(n-1)}$ by leaves removal,
the connected component of $v$ in the graph $G^{(n-1)}$ is again a finite tree
(of height at most two, when counted from the root $v$). 
Continuing in the same way, we conclude that
the connected component of 
$v$ in the graph $G^{(n-j)}$ is a finite tree (of height at most $j+1$
from the root $v$),
for each $j=1,2,\dots,n$. In particular, for $j=n$ we obtain 
the first assertion of the lemma.

Next we turn to prove 
the second assertion of the lemma. Consider two distinct vertices $v,w \in J^{(n)}$. 
Since all the neighbors of both $v$ and $w$ within $G^{(n)}$ are leaves,
$v$ and $w$ cannot have any common neighbor in $G^{(n)}$.
Hence  the connected components of $v$ and $w$ in the graph 
$G^{(n)}$ are disjoint. Since
$G^{(n)}$ was obtained from $G^{(n-1)}$ by leaves removal,
the connected components of $v$ and $w$ in the graph $G^{(n-1)}$ 
are also disjoint.
Continuing in the same way, we conclude that
the connected components of $v$ and $w$ 
 in the graph $G^{(n-j)}$ are disjoint
for each $j=1,2,\dots,n$. In particular this is the case for
$j=n$ and so the second assertion of the lemma follows.
\end{proof}

 \subsection{Proof of \thmref{thmA2.3}}

We now turn to prove \thmref{thmA2.3}.
Recall that the theorem asserts that if
the tiling level vector $(p,q)$ 
 is not proportional to any vector of
the form $(n,m)$ where $n,m$ are  nonnegative integers,
then $m (\Om) \geq \al + \be$.
To prove this result,
we will assume that the tiling levels $p$ and $q$ are both nonzero  and that
\begin{equation} 
\label{eq.omAlBe}
m(\Omega) < \alpha +\beta,
\end{equation}
and we will show that this  implies that
$(p,q)$ must be 
proportional to some vector of
the form $(n,m)$ where $n,m$ are two positive integers.

Recall from \eqref{eqFX4.1} that  we have
$m(A) = \al$, $m(B) = \be$, hence
\eqref{eq.omAlBe}  is equivalent to  the assumption that
\begin{equation} 
\label{eqFX5.37}
m(E) < m(A) + m(B),
\end{equation}
that is, the total measure of the edges in the graph $G(\Om)$ is strictly smaller than 
the total measure of the vertices.

The following lemma shows that to prove \thmref{thmA2.3}
 it would be enough to establish the existence of 
a finite connected component in the graph $G(\Om)$.

\begin{lem}[total weight equality]
\label{DoubleCount}
Assume that the graph $G(\Om)$ has a finite connected component $H$,
and suppose that $H$ has
$m$ vertices in $A$ and $n$ vertices in $B$.  Then
\begin{equation} 
\label{eqFX5.64}
mp=nq.
\end{equation}
\end{lem}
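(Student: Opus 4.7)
The plan is to prove \lemref{DoubleCount} by a straightforward double-counting argument based on the weight structure of the graph $G(\Om)$. Recall that after the normalization provided by \lemref{lemFinDegTile}, every vertex $a \in A$ has weight $p$ (meaning $\sum_{e \ni a} f(e) = p$) and every vertex $b \in B$ has weight $q$. The key observation is that the total edge-weight of a finite connected component can be computed in two ways: by summing vertex weights on the $A$-side, or by summing vertex weights on the $B$-side.

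First I would compute the total edge-weight $W(H) := \sum_{e \in E(H)} f(e)$ of the component $H$ from the $A$-side. Since $H$ is a connected component of $G(\Om)$, every edge incident to a vertex of $A \cap H$ lies entirely inside $H$, and conversely every edge of $H$ has its unique $A$-endpoint in $A \cap H$ (because $G(\Om)$ is bipartite). Therefore
\begin{equation*}
W(H) = \sum_{a \in A \cap H} \Big( \sum_{e \ni a} f(e) \Big) = \sum_{a \in A \cap H} p = mp,
\end{equation*}
where the first equality uses that each edge of $H$ is counted exactly once.

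Next I would carry out the analogous computation from the $B$-side. By the same reasoning, every edge of $H$ has a unique $B$-endpoint in $B \cap H$, and all edges incident to vertices of $B \cap H$ lie within $H$. Hence
\begin{equation*}
W(H) = \sum_{b \in B \cap H} \Big( \sum_{e \ni b} f(e) \Big) = \sum_{b \in B \cap H} q = nq.
\end{equation*}
Comparing the two expressions for $W(H)$ gives the desired equality $mp = nq$.

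There is essentially no obstacle here; the finiteness of $H$ ensures that the sums are finite and can be rearranged freely, while the bipartite structure together with the connected component property guarantees that no edge is missed or double counted. The only subtle point worth stating carefully is the observation that all edges incident to a vertex of $H$ stay inside $H$, which is immediate from $H$ being a connected component. The essential content of the lemma is that weights on the two sides of a finite subtree must balance, and this is simply because each edge contributes to exactly one vertex sum on each side.
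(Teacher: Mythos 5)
Your proof is correct and takes essentially the same double-counting approach as the paper: compute the total edge-weight of the finite component $H$ once by summing vertex weights over $A \cap H$ (giving $mp$) and once over $B \cap H$ (giving $nq$). Your write-up spells out a bit more carefully why edges of $H$ are neither missed nor double-counted, but the argument is the same.
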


\begin{proof}
Recall that we have assumed (using \lemref{lemFinDegTile}) that the  weight of each
vertex in $G(\Om)$ is either $p$ or $q$, depending on whether this vertex lies in $A$ or in $B$.
Consider the total weight of the connected component $H$, that is,
the sum of the weights of all the edges in $H$.  On   
one hand, this sum is the same as the sum of the  weights of the vertices of $H$ 
that belong to  $A$, 
 and therefore it is equal to $mp$. On the other hand, this sum is also
the same as the sum of the  weights of the vertices 
of $H$ that belong to  $B$, 
 so it must also be equal to $nq$. Hence the equality in \eqref{eqFX5.64} must hold.
\end{proof}

We can now complete the proof of \thmref{thmA2.3}.

\begin{proof}[Proof of  \thmref{thmA2.3}]
We may assume with no loss of generality that  $\al > \be$.
Consider the graph $G(\Om)$ and its leaves-removal-graph-sequence $G^{(n)}(\Om)$.
Suppose that \eqref{eq.omAlBe} holds, then equivalently we have
\eqref{eqFX5.37} and thus
$\euler^{(0)} > 0$. We claim that after at most $r = \lceil\frac{\al+\be}{\euler^{(0)}}\rceil$ 
steps of the iterative leaves removal process, a measure jump must occur.
Indeed, if not then $\euler^{(n)} = \euler^{(0)}$
for each $0 \leq n \leq r$.
 But then \lemref{leavesBound} implies that
 the measure of the set  ${L}^{(n)}$  
 of the leaves removed 
  at the $n$'th step  of the iterative process, 
  is at least $\euler^{(0)}$ 
  for each $0 \leq n \leq r$.
  Thus the total measure of the removed  leaves must be
 at least $(r+1) \euler^{(0)}$. But $(r+1) \euler^{(0)}$ is greater
 than $m(A) + m(B)$ which is the total measure of the set of 
vertices in the entire graph $G(\Om)$, so we   arrive at  a contradiction.
 Hence a measure jump must occur.

We thus conclude that there exists at least one
jump set $J^{(n)}$ of positive measure,
that is, there is $n$ such that $m(J^{(n)}) = \euler^{(n)} - \euler^{(n+1)} > 0$.
In particular the jump set $J^{(n)}$ is nonempty. Then by
 \lemref{lemJump}, any vertex $v\in J^{(n)}$ belongs to a finite
 connected component of the graph $G(\Om)$.
 Thus $G(\Om)$ has a finite connected component. 
By \lemref{DoubleCount}, there exist two positive integers $n,m$ such that
$m  p= n q$.  We conclude that
the vector $(p,q)$ is proportional to  $(n,m)$. \thmref{thmA2.3} is thus proved.
\end{proof}

\subsection{The total jump set}
We now move on towards our next  goal, which
 is to prove \thmref{thmA2.4}. This will require a more 
 detailed analysis of the jump sets which occur in our 
 iterative leaves removal process.   
 We start with following lemma.

\begin{lem}[Euler characteristics limit]
\label{lemLim}
We have
\begin{equation} 
\label{eqFX5.69}
\lim_{n\rightarrow\infty} \euler^{(n)} \leq 0.
\end{equation}
\end{lem}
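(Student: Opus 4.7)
My plan is to argue by contradiction, leveraging the two results already in hand: the monotonicity lemma (\lemref{lemMonoton}) and the removed leaves estimate (\lemref{leavesBound}).

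First, by \lemref{lemMonoton} the sequence $\{\euler^{(n)}\}$ is non-increasing, hence its limit $\euler^* := \lim_{n \to \infty} \euler^{(n)}$ exists in $[-\infty, \euler^{(0)}]$. Assume toward contradiction that $\euler^* > 0$. Then $\euler^{(n)} \geq \euler^* > 0$ for every $n \geq 0$, and by \lemref{leavesBound} (assuming without loss of generality $\al > \be$) we get $m(L^{(n)}) \geq 2 \euler^{(n)} \geq 2 \euler^*$ for every $n \geq 1$.

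The key observation is that the odd-indexed leaf sets $L^{(1)}, L^{(3)}, L^{(5)}, \dots$ are pairwise disjoint subsets of $B$. Indeed, for odd $n$ we have $L^{(n)} = B^{(n)}_1 \sbt B^{(n)}$, and by the very definition of the leaves removal process,
\[
B^{(n+1)} = B^{(n)} \setminus B^{(n)}_1 = B^{(n)} \setminus L^{(n)},
\]
so every $B^{(n')}$ with $n' > n$ is contained in $B^{(n)} \setminus L^{(n)}$ and in particular is disjoint from $L^{(n)}$. Consequently the sets $L^{(n)}$ with $n$ odd are pairwise disjoint measurable subsets of $B$, giving
\[
\sum_{n \text{ odd}} m(L^{(n)}) \leq m(B) = \be < \infty.
\]
But each term on the left is bounded below by $2 \euler^* > 0$, so the series must diverge, a contradiction. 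Hence $\euler^* \leq 0$, as claimed.

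I do not foresee a genuine obstacle: the work is packaged into \lemref{lemMonoton} and \lemref{leavesBound}, and the only subtlety is the bookkeeping that justifies disjointness of the successive leaf sets within a single side of the bipartition. An entirely symmetric argument using the even-indexed $L^{(n)}$ inside $A$ (of measure $\al$) would give the same conclusion; choosing the $B$-side is merely a convenient choice to avoid interference with the step-$0$ asymmetry of \lemref{leavesBound}.
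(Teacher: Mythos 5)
Your proof is correct, and it is genuinely more elementary than the one in the paper. The paper constructs the graph limit $G^{(\omega)}(\Omega)$ as the intersection $\bigcap_n G^{(n)}$, shows (via monotone convergence of the sets $A^{(n)}, B^{(n)}, E^{(n)}$) that $\euler^{(n)} \to \euler^{(\omega)}$, and then derives a contradiction from \lemref{lemLeavsExist} applied to $G^{(\omega)}$: a positive Euler characteristic would force a positive-measure set of leaves in the limit graph, but any such leaf has a well-defined largest step $N$ at which its extra neighbors disappear, after which it should itself have been removed. Your argument bypasses the graph limit entirely: you observe that the odd-indexed removed-leaf sets $L^{(1)}, L^{(3)}, \dots$ are pairwise disjoint measurable subsets of $B$ (since $B^{(n+1)} = B^{(n)} \setminus B_1^{(n)}$ for odd $n$ and the vertex sets are nested), so their measures must sum to at most $m(B) = \be < \infty$; a persistently positive Euler characteristic would, via \lemref{leavesBound}, force each of these measures to stay bounded below by a positive constant, which is impossible. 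What your route buys is brevity and the avoidance of any analysis of the limit object; what the paper's route buys is the explicit construction of $G^{(\omega)}(\Omega)$, which provides additional conceptual clarity even if not strictly required for the subsequent use of $\euler^{(\omega)}$ in \lemref{lemJMlower} (there only the numerical limit is needed, and your argument supplies it just as well). Your disjointness bookkeeping and the WLOG reduction to $\al > \be$ are both handled correctly.
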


Notice that the existence of the limit in \eqref{eqFX5.69}
is guaranteed due to the 
monotonicity of  the sequence
$\euler^{(n)}$  (\lemref{lemMonoton}).

\begin{proof}[{Proof of \lemref{lemLim}}]
Let
$G^{(n)}(\Om) = (A^{(n)}, B^{(n)}, E^{(n)})$  be 
the leaves-removal-graph-sequence of $G(\Om) = (A,B,E)$,
and recall that
\begin{equation} 
\label{eqFX5.71}
A^{(n+1)} \sbt A^{(n)}, \quad
B^{(n+1)} \sbt B^{(n)}, \quad
E^{(n+1)} \sbt E^{(n)},
\end{equation}
since the graph $G^{(n+1)}$ is obtained from $G^{(n)}$ by the removal of
vertices and edges. 
We define the \emph{graph limit} 
$G^{(\omega)}(\Omega) = (A^{(\omega)}, B^{(\omega)}, E^{(\omega)})$ by
\begin{equation} 
\label{eqFX5.72}
A^{(\omega)} = \bigcap_n A^{(n)}, \quad  
B^{(\omega)} = \bigcap_n B^{(n)}, \quad
 E^{(\omega)} = \bigcap_n E^{(n)}.
\end{equation}
Equivalently, 
$G^{(\omega)}(\Omega)$  is the graph induced by the 
set $\Omega^{(\omega)} = \bigcap_n  \Omega^{(n)}$ 
(the equivalence  can be verified in a straightforward way using the
finite degrees assumption).

It follows from
\eqref{eqFX5.71} and  \eqref{eqFX5.72} that
\begin{equation} 
\label{eqFX5.74}
m(A^{(n)}) \to m(A^{(\omega)}), \quad
 m(B^{(n)}) \to m(B^{(\omega)}), \quad
m(E^{(n)}) \to m(E^{(\omega)}),
\end{equation}
and consequently,
 \begin{align}
 \lim_{n\rightarrow \infty} \euler^{(n)} &= \lim_{n\rightarrow \infty} 
 (m(A^{(n)}) + m(B^{(n)}) - m(E^{(n)}) ) \nonumber \\
& = m(A^{(\omega)}) + m(B^{(\omega)}) - m(E^{(\omega)})
= \euler^{(\omega)}, \label{eqLimEuler}
\end{align}
where $\euler^{(\omega)}$ is the 
Euler characteristic of the graph 
$G^{(\omega)}(\Omega)$.

 Now, suppose to the  contrary that $\euler^{(\omega)} > 0$. Then
 we may apply \lemref{lemLeavsExist}  to  the graph limit  $G^{(\omega)}(\Om)$
  and obtain that there must be a set of leaves with positive measure in $G^{(\omega)}(\Om)$. 
  Let $v$ be any leaf of  $G^{(\omega)}(\Om)$, then 
  $v$ has exactly one neighbor $w_0$ in
$G^{(\omega)}(\Om)$. Notice that the vertex $v$ must have at least one more
neighbor in the original graph $G(\Om)$, for otherwise $v$ is a leaf
in $G(\Om)$ and should have been removed in either the first or second
step of the leaves removal process.
Let  $w_0, w_1, \dots, w_k$ be all the neighbors 
of $v$ in $G(\Om)$ (there can be only finitely many  neighbors
due to the finite degrees assumption).
  Since the vertices $w_1, \dots, w_k$ are no longer in 
  the graph limit
  $G^{(\omega)}(\Om)$,
      for each $1 \le j \le k$ 
    there is $n_j$ such that
  $w_j$ is not in $G^{(n_j)}(\Om)$.
Hence if we let $N := \max\{n_1, \dots, n_k\}$
then   $G^{(N)}(\Om)$ does not contain
any one of the vertices $w_1, \dots, w_k$.
Thus $v$ is a  leaf already in 
  the graph $G^{(N)}(\Om)$. But then
  $v$ should have been removed at the $N$'th 
step of the leaves removal process, so $v$ cannot belong to the graph limit
  $G^{(\omega)}(\Om)$. We thus arrive at a contradiction. This shows that
  $\euler^{(\omega)}$ cannot be positive and the lemma is proved.
\end{proof}

\begin{definition}[the total jump set]
The set
\begin{equation}
\label{eqtotalJS}
J = \bigcup_{n=0}^{\infty} J^{(n)}
\end{equation}
will be called the \emph{total jump set} of the graph $G(\Om)$.
\end{definition}

Recall that $J^{(n)}$ is a subset of $B$ 
if $n$ is even,  and 
$J^{(n)}$ is a subset of $A$   if $n$ is odd.
Hence $J$ is a subset of the 
entire vertex set  $V =A \cup B$ of the graph $G(\Om) = (A,B,E)$. 
Moreover, if we consider
$V$ as a measure space, formed by the disjoint union of the 
two measure spaces $A$ and $B$, then 
$J$ is a measurable subset of $V$
(\lemref{mesLem}).

We also notice that 
the sets $J^{(n)}$ form a partition of $J$
(being disjoint sets) and hence 
the measure $m(J)$ of the total jump set 
 is equal to the sum of all the measure jumps.
 By the proof of \thmref{thmA2.3}
  we know that at least one measure jump must occur, 
  which implies that the set $J$ has positive measure.
  Now we prove a stronger result:

\begin{lem}[lower bound for the total jump measure]
\label{lemJMlower}
We have
\begin{equation}
\label{eqtotalJMlower}
m(J) \geq m(A) + m(B) - m(\Om).
\end{equation}
\end{lem}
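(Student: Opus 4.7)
The plan is to recognize that the measures of the jump sets telescope into the Euler characteristics, and then to invoke \lemref{lemLim} to control the limit. Specifically, by \eqref{eqgapmJn}, we have $m(J^{(n)}) = \euler^{(n)} - \euler^{(n+1)}$ for every $n \geq 0$, and the sets $J^{(n)}$ are pairwise disjoint since $J^{(n)} \subset B^{(n)} \subset B$ for even $n$ while $J^{(n)} \subset A^{(n)} \subset A$ for odd $n$, and furthermore each $J^{(n)}$ consists of vertices that are removed at step $n+1$ of the iterative process, so they are disjoint across different values of $n$.

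Therefore the measure of the total jump set is
\begin{equation*}
m(J) \;=\; \sum_{n=0}^{\infty} m(J^{(n)}) \;=\; \sum_{n=0}^{\infty} \big( \euler^{(n)} - \euler^{(n+1)} \big),
\end{equation*}
which is a telescoping series. Using that the partial sums equal $\euler^{(0)} - \euler^{(N+1)}$ and passing to the limit as $N \to \infty$, we obtain
\begin{equation*}
m(J) \;=\; \euler^{(0)} - \lim_{n \to \infty} \euler^{(n)}.
\end{equation*}
Now \lemref{lemLim} gives $\lim_{n \to \infty} \euler^{(n)} \leq 0$, and $\euler^{(0)} = m(A) + m(B) - m(\Omega)$ by definition, so
\begin{equation*}
m(J) \;\geq\; \euler^{(0)} \;=\; m(A) + m(B) - m(\Omega),
\end{equation*}
which is exactly \eqref{eqtotalJMlower}.

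There is no real obstacle here: the lemma is essentially a bookkeeping consequence of the telescoping identity \eqref{eqgapmJn} combined with the already-proved \lemref{lemLim}. The only slightly subtle point worth verifying is the disjointness of the $J^{(n)}$, which follows from the fact that once a vertex is placed in $J^{(n)}$ it is simultaneously removed from the graph at the next iteration, and hence cannot reappear in any subsequent $J^{(k)}$ with $k > n$.
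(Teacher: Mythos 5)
Your proof is correct and matches the paper's argument exactly: both telescope the identity $m(J^{(n)}) = \euler^{(n)} - \euler^{(n+1)}$ to obtain $m(J) = \euler^{(0)} - \lim_{n\to\infty}\euler^{(n)}$, and then apply \lemref{lemLim} to discard the non-positive limit term. The brief remark on disjointness of the $J^{(n)}$ is a sound (if slightly redundant) justification of a fact the paper simply asserts.
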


\begin{proof}
Using \eqref{eqLimEuler} we have
\begin{align}
\euler^{(0)} - \euler^{(\omega)} &= \lim_{n \to \infty}  (\euler^{(0)} - \euler^{(n)} ) 
= \lim_{n \to \infty} \sum_{k=0}^{n-1}  (\euler^{(k)} - \euler^{(k+1)} )  = \\
&= \lim_{n \to \infty} \sum_{k=0}^{n-1}  m(J^{(k)}) = m(J).
\end{align}
But due to \lemref{lemLim} we know that $\euler^{(\omega)}$ is non-positive, thus
\begin{equation}
m(J) = \euler^{(0)} - \euler^{(\omega)}  \geq 
\euler^{(0)} 
\end{equation}
which establishes \eqref{eqtotalJMlower}.
\end{proof}

\begin{lem}[total jump set  as a set of representatives]
\label{JRep}
Every connected component 
 of the graph $G(\Om)$ which is a finite tree, 
intersects the total jump set $J$ at exactly one vertex. 
Conversely, each vertex $w \in J$ lies in 
a connected component of 
 the  graph $G(\Om)$ which 
is a finite tree.
\end{lem}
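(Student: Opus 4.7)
The converse direction is immediate from the finite subtree lemma (\lemref{lemJump}): any $w \in J$ lies in some $J^{(n)}$, whose vertices have finite-tree $G(\Omega)$-components. For the forward direction I would fix a finite tree component $H$ of $G(\Omega)$ and analyze $H^{(n)} := H \cap G^{(n)}$ under the iterative leaves removal process. Since $H$ is a connected component of $G(\Omega)$ and $G^{(n)}$ is a subgraph, no vertex of $H$ has a $G^{(n)}$-neighbor outside $H$; hence $H^{(n)}$ is a union of connected components of $G^{(n)}$, and the notions of leaf and star vertex in $G^{(n)}$ agree with the corresponding notions in $H^{(n)}$ on the vertices of $H$.

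The heart of the argument is a dichotomy proved by induction on $n$: $H^{(n)}$ is either empty or a subtree of $H$, and in the latter case either (a) it contains a star vertex of the parity-appropriate type (a $B$-star if $n$ is even, an $A$-star if $n$ is odd), in which case the neighbors of this star in $H^{(n)}$ are leaves whose only $H^{(n)}$-neighbor is the star itself, so the star together with its neighbors forms a connected component of $H^{(n)}$ which by connectedness of $H^{(n)}$ must coincide with all of $H^{(n)}$, making $H^{(n+1)} = \emptyset$; or (b) it contains no such star, in which case the transition only prunes the parity-appropriate pendant vertices and $H^{(n+1)}$ is again a subtree of $H$. Since every non-empty finite bipartite tree has leaves of at least one parity, $H^{(n)}$ must strictly shrink over any two consecutive steps, so there is a least index $N$ with $H^{(N)} = \emptyset$.

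Applying the dichotomy at step $N-1$ shows that $H^{(N-1)}$ is a star graph, so its center $v_0$ lies in $J^{(N-1)}$ and in $H$, establishing existence. For uniqueness, for every $n < N-1$ the set $H^{(n+1)}$ is still non-empty, so by the dichotomy $H^{(n)}$ cannot be a star graph and hence contains no star vertex of the parity-appropriate type, giving $H \cap J^{(n)} = \emptyset$; for $n \geq N$ the intersection is trivially empty since $H^{(n)}$ is empty. Thus $H \cap J = \{v_0\}$. The main technical obstacle is the promotion step within the dichotomy, from the existence of a single parity-appropriate star vertex to the conclusion that all of $H^{(n)}$ is a star graph, where the tree structure inductively maintained for $H^{(n)}$ is essential.
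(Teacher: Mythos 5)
Your proof is correct and takes essentially the same approach as the paper's: track the iterative leaves removal process restricted to the finite tree component $H$, show that it gets exhausted after finitely many steps, and that exactly one vertex of $H$ enters the jump set $J$ at the final step. You fill in the details (the dichotomy, the connectedness of each $H^{(n)}$, the shrinking over two consecutive steps, and the promotion of a single parity-appropriate star vertex to all of $H^{(n)}$ being a star graph) that the paper leaves to the reader with the remark that the exhaustion claim ``can be easily proved by induction on the size of the tree.''
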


Thus, we may consider the total jump set $J$ as a set of representatives,
 containing a unique representative vertex for each connected component of 
 the  graph $G(\Om)$ which 
is a finite tree.

\begin{proof}[Proof of \lemref{JRep}]
Recall that $G^{(n+1)}(\Om)$
  is obtained from $G^{(n)}(\Om)$ by (i) the removal
  of  the leaves in $A^{(n)}$ if $n$ is even, or
  the leaves in $B^{(n)}$ if $n$ is odd; (ii)
  the removal of the edges incident to the leaves removed; 
  and (iii) the removal of
   the set $J^{(n)}$ of vertices that become isolated 
   (which is the set $B^{(n)}_*$ if $n$ is even, or the set $A^{(n)}_*$ if $n$ is odd).

Now let $H$ be a connected component 
 of the graph $G(\Om)$, and assume that $H$ is a finite tree.
Then  the iterative leaves removal process necessarily exhausts the tree $H$
after a finite number of steps (this can be easily proved by 
induction on the size of the tree).  Moreover, the tree $H$ gets exhausted at the unique
step $n$ for which $J^{(n)} \cap H$ is nonempty, and
$J^{(n)} \cap H$ must then consist of exactly one vertex.

(It is worth mentioning that at the last step $n$ when 
the tree  gets exhausted, it may happen that there is only one edge
of the tree left to be removed. In this case,  one of the vertices of this edge
will be considered as a leaf, while the other vertex will be considered 
as an element of the set $J^{(n)}$.)

Thus, each 
 connected component 
 of the graph $G(\Om)$ which is a finite tree,
 contributes exactly one vertex to the total jump set $J$.

Conversely, consider a vertex $w \in J$. Then $w$
 belongs to the set $J^{(n)}$ for some $n$,
 so  by \lemref{lemJump}  the connected component of $w$
  in the graph $G(\Om)$ is a finite tree.
\end{proof}

\begin{lem}[upper bound for the total jump measure]
\label{TreeRep}
Assume that the tiling levels
 $p,q$ are two positive coprime integers. Then
\begin{equation}
\label{MesJInEq}
m(J) \leq \min \Big\{\frac{\al}{q}, \frac{\be}{p} \Big\}.
\end{equation}
\end{lem}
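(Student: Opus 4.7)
The plan is to use the total jump set $J$ as a set of representatives for the finite tree components of $G(\Om)$ (\lemref{JRep}), combined with the following consequence of \lemref{DoubleCount}: for each finite tree component $H$ with $m_H$ vertices in $A$ and $n_H$ vertices in $B$, we have $m_H p = n_H q$, and since $p, q$ are coprime this forces $m_H \ge q$ and $n_H \ge p$. Letting $T_A \sbt A$ (resp.\ $T_B \sbt B$) denote the set of $A$-vertices (resp.\ $B$-vertices) belonging to some finite tree component of $G(\Om)$, the theorem would follow at once from the two measure inequalities
\[
m(T_A) \,\ge\, q \cdot m(J), \qquad m(T_B) \,\ge\, p \cdot m(J),
\]
because $T_A \sbt A$, $T_B \sbt B$, and $m(A) = \al$, $m(B) = \be$ by \eqref{eqFX4.1}.

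To prove the first inequality, I would introduce the natural ``component representative'' map $\Phi_A \colon T_A \to J$ sending each $v \in T_A$ to the unique element of $J$ in the same connected component as $v$; each fiber of $\Phi_A$ has cardinality $m_{H(v)} \ge q$. The crucial rigidity property is that once one vertex of a finite tree component $H$ is lifted to a point $\tilde v \in \R$, the positions of all other vertices of $H$ are forced to be $\pal$ or $\pbe$ applied to $\tilde v + c$, where $c \in \R$ is an integer combination of $\al,\be$ determined by the combinatorial type of the rooted tree together with the specific integers realizing its $\R$-embedding.

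Accordingly, I would decompose $J$ into countably many measurable pieces $J_{\tau,\varepsilon}$ indexed by the rooted combinatorial type $\tau$ of $H(v)$ (with root at the representative $v \in J$) and by the tuple $\varepsilon$ of edge offsets specifying the $\R$-embedding of $H(v)$ relative to its root. On each such piece, and for each $A$-vertex slot $k$ of $\tau$, the map $v \mapsto (k\text{-th }A\text{-vertex of }H(v))$ takes the form $v \mapsto \pal(\tilde v + c_{\tau,\varepsilon,k})$ for a fixed real number $c_{\tau,\varepsilon,k}$, hence is measure-preserving from $J_{\tau,\varepsilon}$ into $A$. Since distinct components of $G(\Om)$ have disjoint vertex sets, and the $m_\tau$ slots within a single component are distinct points, summing the image measures over all slots and pieces yields
\[
m(T_A) \,=\, \sum_{\tau,\varepsilon} m_\tau \cdot m(J_{\tau,\varepsilon}) \,\ge\, q \cdot m(J),
\]
and the analogous argument with $\al,\be$ and $p,q$ swapped gives $m(T_B) \ge p \cdot m(J)$.

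The main technical obstacle will be the measurability bookkeeping: verifying that the decomposition $J = \bigsqcup_{\tau,\varepsilon} J_{\tau,\varepsilon}$ is into measurable pieces (using \lemref{mesLem} together with the finite-degrees assumption, and relying on the countability of the index set $\{(\tau,\varepsilon)\}$), and that the ``$k$-th $A$-vertex'' assignment assembles into a measurable map whose restriction to each piece is indeed given by the claimed translation formula.
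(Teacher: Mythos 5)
Your strategy is correct and reduces the lemma to the same two key inequalities $m(A') \ge q \cdot m(J)$ and $m(B') \ge p \cdot m(J)$ (in the paper's notation $A'$, $B'$ coincide with your $T_A$, $T_B$) that the paper establishes, but your route to them is genuinely different. The paper makes the intuition from \remref{RemLemTreeRep} rigorous via a ``mass transfer'' argument: it builds a sequence of nonnegative functions $\psi_A^{(n)}$ on $V' = A' \cup B'$, seeded with unit mass on $A'$, and at each step of the iterative leaves removal transfers mass from removed leaves to their neighbors; the reformulation $\phi_A^{(n)} = (-1)^{n+1}\pal(g_n)$ on $A'$ and $\phi_A^{(n)} = (-1)^n\pbe(g_n)$ on $B'$ delivers measurability and integrability for free from the projection operators, the cancellation $\int_{A'}\pal(g_n) = \int_\Om g_n = \int_{B'}\pbe(g_n)$ gives mass conservation $\int_{V'}\psi_A^{(n)} = m(A')$, and since $\psi_A^{(n)} \to h_A$ pointwise on $J$ and $\to 0$ off $J$, Fatou's lemma yields $\int_J h_A \le m(A')$, after which the pointwise bound $h_A \ge q$ (from coprimality via \lemref{DoubleCount}) finishes. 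Your decomposition of $J$ by rooted combinatorial type $\tau$ together with the integer offset data $\varepsilon$ instead produces, in effect, the exact equality $m(A') = \int_J h_A = \sum_{\tau,\varepsilon} m_\tau\, m(J_{\tau,\varepsilon})$, which is stronger and more geometrically transparent: each slot map is an explicit composition of a measurable section of $\pal$ or $\pbe$, a translation by a fixed constant, and a projection, hence measure-preserving, and disjointness of images across pieces and slots follows because distinct elements of $J$ lie in disjoint components (\lemref{JRep}, \lemref{lemJump}). What you gain in transparency you pay for in the measurability bookkeeping you flag: one must verify that each $J_{\tau,\varepsilon}$ is measurable (a finite Boolean combination of conditions expressible via iterated images and preimages under $\pal$, $\pbe$ acting on $\Om$, using the finite-degrees assumption in the spirit of \lemref{mesLem} and \lemref{mesAtag}), and that the lift $v \mapsto \tilde v$ used to define each slot map is a single-valued measurable section --- for instance one can normalize $\varepsilon$ so that $\tilde v$ is the incident edge of smallest real value. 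This is achievable with the tools already in the paper, so there is no genuine gap, only detail work; the paper sidesteps this combinatorial case analysis by encoding the component structure dynamically through the leaves removal process, which is why it settles for the Fatou inequality rather than your equality, and that inequality suffices both here and in the later sharpness argument for \thmref{thmA2.4}.
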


\begin{remark}
\label{RemLemTreeRep}
Let us explain our intuition behind \lemref{TreeRep}. Recall
that each vertex $w \in J$ is a representative of a connected 
component of $G(\Om)$ which is a finite  tree
(\lemref{JRep}). Let $H$ be  one of these connected 
components, and suppose that $H$ has
$m$ vertices in $A$ and $n$ vertices in $B$. Using
\lemref{DoubleCount}  it follows  that $m   p = n   q$. But
since  $p,q$ are now assumed to be positive coprime integers, this implies
that $q$ must divide $m$, and $p$ must divide $n$.
In particular, we  have
$m \geq q$ and $n \geq p$. Hence the connected component of
each vertex $w \in J$ contributes at least $q$ vertices to $A$, and at least $p$ vertices to $B$.
So intuitively we may expect to have 
\begin{equation}
\label{AqM}
 m(A) \geq q \cdot m(J),
\end{equation}
and 
\begin{equation}
\label{BqM}
 m(B) \geq p \cdot m(J).
\end{equation}
But notice that according to \eqref{eqFX4.1}
we have $m(A) = \al$, $m(B) = \be$, so that
the two inequalities \eqref{AqM} and \eqref{BqM}
together imply \eqref{MesJInEq}.
This explains why intuitively one may  expect that \lemref{TreeRep} should be true.
\end{remark}

We now turn to the formal proof of  \lemref{TreeRep}.
Let  $V =A \cup B$ be the vertex set 
of the graph $G(\Om) = (A,B,E)$,
and let $V' \sbt V$ be the set of those vertices whose connected component
in the graph $G(\Om)$ is a finite tree. Let
\begin{equation}
A' = V' \cap A, \quad B' = V' \cap B.
\end{equation}

\begin{lem}\label{mesAtag}
The sets $A', B'$ (and hence also $V' = A' \cup B'$) are measurable.
\end{lem}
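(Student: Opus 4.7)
The plan is to characterize $V'$ as the set of vertices reachable by a finite path in $G(\Om)$ from the total jump set $J$, and then to propagate the measurability of $J$ through successive neighborhoods.

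First I would invoke \lemref{JRep}: a vertex $v$ lies in $V'$ if and only if the connected component of $v$ in $G(\Om)$ contains a vertex of $J$, equivalently, $v$ is connected to some vertex of $J$ by a finite path in $G(\Om)$. Writing $N(S)$ for the set of vertices adjacent in $G(\Om)$ to at least one vertex of $S \sbt V$, and defining $N_0(J) := J$ and $N_{k+1}(J) := N_k(J) \cup N(N_k(J))$, this characterization becomes
\[
V' = \bigcup_{k=0}^{\infty} N_k(J).
\]
The set $J$ itself is measurable: by definition $J = \bigcup_n J^{(n)}$, and each $J^{(n)}$ equals $A^{(n)}_*$ or $B^{(n)}_*$, which are measurable by \lemref{mesLem} applied to the graph $G^{(n)}(\Om) = G(\Om^{(n)})$ (the sets $\Om^{(n)}$ themselves being measurable, as was noted earlier).

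The key step is to verify that the one-step neighborhood operator preserves measurability, namely that for measurable $S \sbt A$ the set $N(S) \sbt B$ is measurable, and symmetrically for $S \sbt B$. Identifying edges of $G(\Om)$ with points of $\Om$, the set of edges incident to $S$ equals $\pal^{-1}(S) \cap \Om$, a measurable subset of $\R$, and $N(S)$ is its image under $\pbe$. Since $\pbe$ restricts to a bimeasurable bijection from each fundamental domain $[k\be,(k+1)\be)$ onto $\tbe$, this image is a countable union of measurable subsets of $\tbe$, hence measurable; the argument with $\pal$ and $\pbe$ interchanged handles the case $S \sbt B$. By induction every $N_k(J)$ is measurable, so $V'$ is measurable, and therefore so are $A' = V' \cap A$ and $B' = V' \cap B$.

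The main point requiring care is the last step, since images of measurable sets under general projections may fail to be measurable; the rescue comes from the very simple structure of $\pal$ and $\pbe$ as quotient maps with measurable sections on each fundamental domain. Everything else is bookkeeping with the inductive definition of $N_k(J)$ together with the already-established measurability of $J$.
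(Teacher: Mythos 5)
Your proof is correct and follows essentially the same strategy as the paper's: characterize $V'$ as the set of vertices reachable from the (measurable) total jump set $J$ by a finite path, then propagate measurability through iterated neighborhoods, using the fact that both images and preimages under $\pal$ and $\pbe$ preserve measurability. The one point where you go slightly further than the paper is in explicitly justifying that $\pal$- and $\pbe$-images of measurable sets are measurable via the fundamental-domain decomposition of $\R$; the paper asserts this without proof, so your extra care there is a welcome detail.
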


\begin{proof}
Consider the sets   $J_A := J \cap A$ and $J_B := J \cap B$.
The sets $J_A$, $J_B$ are measurable since $A$, $B$ and $J$ are 
measurable sets.

Recall that every connected component in the graph $G(\Om)$ which 
is a finite tree, has a representative vertex $v \in J$ (\lemref{JRep}).
Hence $A'$ is the set of all vertices $a \in A$ such that
there is a finite path  connecting $a$ to some element $v \in J = J_A \cup J_B$.

We next observe that a vertex $a \in A$ is connected to some vertex $v \in J_A$
if and only if $a$ belongs to the set
$(\pal \circ \pbe^{-1} \circ \pbe \circ \pal^{-1})^n(J_A)$
for some $n$. This is because when moving from a vertex in $A$ to a neighbor vertex in $B$,
we first go from the  vertex to some of its incident edges (which corresponds to picking an element of $\Om$ belonging to the preimage of the vertex under the map $\pal$), and then go from this edge to its other endpoint vertex (which corresponds to taking the image of the edge under $\pbe$).
Similarly, when moving from a vertex in $B$ to a neighbor vertex in $A$, we first 
pick an edge in the preimage under the map $\pbe$ and then 
take the image of the edge under $\pal$.

For a similar reason,  a vertex $a \in A$ is connected to some vertex $v \in J_B$
if and only if $a$ is in the set
$ (\pal \circ \pbe^{-1} \circ \pbe \circ \pal^{-1})^n(\pal \circ \pbe^{-1})(J_B) $
for some $n$. 

(We note that here we consider $\pal$ and $\pbe$ as maps defined on the set $\Om$, 
 thus inverse images under these maps are understood to be subsets of $\Om$.)

We have thus shown that
\begin{equation}
\label{eqAP3.1}
A' = \bigcup_{n=0}^\infty \Big[
(\pal \circ \pbe^{-1} \circ \pbe \circ \pal^{-1})^n(J_A)  \cup 
 (\pal \circ \pbe^{-1} \circ \pbe \circ \pal^{-1})^n(\pal \circ \pbe^{-1})(J_B) \Big],
\end{equation}
so the measurability of $A'$ follows from the measurability of the sets $J_A, J_B$
and the fact that the measurability of a set is preserved under both images and preimages 
with respect to the maps $\pal$ and $\pbe$.

The proof that the set $B'$ is also measurable is  similar.
\end{proof}

\begin{proof}[Proof of \lemref{TreeRep}]
Assume that the tiling levels
 $p$ and $q$ are two positive coprime integers. 
We must prove that \eqref{MesJInEq} holds, or equivalently, 
that \eqref{AqM} and \eqref{BqM} are both satisfied.
We will prove \eqref{AqM} only. The proof of \eqref{BqM} is similar.

Recall that the connected component of any  vertex $v \in J$  in
 the  graph $G(\Om)$  is a finite tree (\lemref{JRep}). For each $v \in J$ we
let $h_A(v)$ be  the number of vertices of the
 connected component of $v$ which lie in the set $A$.
 Then $h_A$ is a nonnegative, integer-valued function on $J$. 
 Since each finite connected component
of  $G(\Om)$ must contain at least $q$ vertices 
 in $A$ (\remref{RemLemTreeRep}), we have
$h_A(v) \geq q$ for every $v \in J$.

We will show that the function $h_A$ is measurable and satisfies
\begin{equation}
\label{ineqHA}
\int_J h_A \leq m(A').
\end{equation}
Notice that once
 \eqref{ineqHA} is established, we can conclude (using $A' \sbt A$) that
\begin{equation}
\label{ineqAqJ}
m(A) \geq m(A') \geq \int_J h_A \geq q \cdot m(J),
\end{equation}
and \eqref{AqM} follows.
So it remains to show that $h_A$ is measurable and satisfies
\eqref{ineqHA}.

Recall that we denote by 
${L}^{(n)}$ the set of leaves removed at the $n$'th step
of the iterative leaves removal process, that is, if $n$ is even
then ${L}^{(n)} = A^{(n)}_1$ (the set of leaves in $A^{(n)}$), 
and if $n$ is odd then ${L}^{(n)} = B^{(n)}_1$ (the set of leaves in $B^{(n)}$). 
We construct  by induction two sequences $\{\phi_A^{(n)}\}$ and $\{\psi_A^{(n)}\}$ of functions on
 $V' = A' \cup B'$, as follows. We define
\begin{equation}
\label{eqdefpsi}
\psi_A^{(0)} = \1_{A'}, \quad 
\psi_A^{(n+1)} = \psi_A^{(n)} + \phi_A^{(n)},
\end{equation}
and
\begin{numcases}{\phi_A^{(n)}(v) =}
 - \psi_A^{(n)}(v), & $v \in V'  \cap L^{(n)}$ \label{phicase1} \\[6pt]
\sum_{w}  \psi_A^{(n)}(w), & $v  \in V' \setminus L^{(n)}$ \label{phicase2}
\end{numcases}
where  $w$ goes through all the vertices in $V'  \cap L^{(n)}$
who are neighbors of $v$ in the graph $G^{(n)}(\Om)$
(if there are no such neighbors then the sum is understood to be zero).

The motivation behind this construction is that
we view the function $\psi_A^{(n)}$ as assigning a certain mass to each vertex in $V'$.
We start with the function $\psi_A^{(0)}$ which  assigns a unit mass to each vertex in $A'$, and zero mass to each vertex in $B'$.  At the $n$'th step of the leaves removal process, 
by adding $\phi_A^{(n)}$ to $\psi_A^{(n)}$ we subtract the mass from each removed leaf in $V'$
and add the mass back to the neighbor of the leaf, so
that at each step the mass is transferred from the removed leaves to their neighbors.

In particular, each $\psi_A^{(n)}$ is a nonnegative, integer-valued function on $V'$.

Notice that whenever the leaves removal process exhausts
a connected component $H$ of the graph $G(\Om)$
 (so that $H$ is a finite tree by \lemref{lemJump}), then
the total mass accumulated
from all the $A'$-vertices of $H$ is transferred into 
the unique representative vertex $v$ of $H$ in the total jump set  $J$
(\lemref{JRep}), and the value of $\psi_A^{(n)}(v)$
will remain fixed from that point and on.
This implies that the sequence $\psi_A^{(n)}$ converges pointwise to
the function $h_A$ on $J$, and to zero on $V' \setminus J$.

We now give an equivalent way of constructing the function
$\phi_A^{(n)}$. 

Let $g_n$ be  a function on the set $E$ of 
edges of the graph  $G(\Om) = (A,B,E)$, defined as follows. We let $g_n(e) = \psi_A^{(n)}(v)$ if
$e$ is a leaf edge in the graph $G^{(n)}(\Om)$, incident  
to a vertex  $v \in V' \cap L^{(n)}$. If this is not the case, then we let $g_n(e)=0$.
Then $g_n$ is supported on the subset $E^{(n)}(V' \cap L^{(n)})$ of the
edge set $E$. By identifying the edges of $G(\Om)$ 
with elements of $\Omega$ (\remref{omegaEdgesId})  we may view
$g_n$ as a function on $\Om$.
 If $n$ is even, then
$g_n(x) = \psi_A^{(n)}(\pal(x))$ if $x$ is in the set
$ \pal^{-1}(V' \cap L^{(n)}) \cap \Om^{(n)}$,
and $g_n(x)=0$ otherwise.
Similarly, if $n$ is odd  then
$g_n(x) = \psi_A^{(n)}(\pbe(x))$ if $x$ is in the set
$ \pbe^{-1}(V' \cap L^{(n)}) \cap \Om^{(n)}$,
and $g_n(x)=0$ otherwise. 

Now consider again the definition \eqref{phicase1}, \eqref{phicase2}
 of the function $\phi_A^{(n)}$. Notice that up to a sign, both of 
 \eqref{phicase1} and \eqref{phicase2} involve summation of the 
 values $g_n(e)$ over all the edges $e$ incident to the vertex $v$, 
 with the only difference that in \eqref{phicase1} there is just one 
 such edge $e$ (since $v$ is a leaf in the graph $G^{(n)}(\Om)$)  while in \eqref{phicase2}
 the vertex $v$ might have several neighbors. Observe also that the identification 
 of edges in the graph with elements of $\Om$,  enables us 
 to express summation over neighbors in terms of the projections
  $\pal$ and $\pbe$. Thus if we extend the function $g_n$ to the whole 
  $\R$ by setting $g_n(x)=0$ for $x \notin \Om$, 
then the function $\phi_A^{(n)}$ can be equivalently defined by
$\phi_A^{(n)} = (-1)^{n+1} \pal(g_n)$ on $A'$, and
 $\phi_A^{(n)} = (-1)^n \pbe(g_n)$ on $B'$.

It follows from this equivalent definition, using induction on $n$, 
that $\phi_A^{(n)}$ and $\psi_A^{(n)}$ are both measurable functions on $V'$.
Moreover, these functions are both in $L^1(V')$, since we have
$\int_{\Om} g_n = \int_{V' \cap L^{(n)}} \psi_A^{(n)}$
and the projections $\pal$ and $\pbe$ map
$L^1(\Om)$ into $L^1(A)$ and $L^1(B)$ respectively.

As a consequence, the function $h_A$ (the pointwise limit of
$\psi_A^{(n)}$ on $J$) is a measurable function on $J$.

Now, notice that we have $\int_{V'}\psi_A^{(0)} = m(A')$. We claim that in fact 
$\int_{V'}\psi_A^{(n)} = m(A')$ for every $n$, that is, when the mass is transferred from
leaves to neighbors along the leaves removal process, 
the total mass remains constant. This is equivalent to the assertion that
$\int_{V'} \phi_A^{(n)} = 0$ for every $n$. Indeed,  we have
$\phi_A^{(n)} = (-1)^{n+1} \pal(g_n)$ on $A'$, and $\phi_A^{(n)} = (-1)^n \pbe(g_n)$ on $B'$.
Hence
\begin{equation}\label{massConst}
(-1)^{n} \int_{V'} \phi_A^{(n)} =  - \int_{A'} \pal(g_n) + \int_{B'} \pbe(g_n)
=  - \int_{\Om} g_n +  \int_{\Om} g_n = 0.
\end{equation}

We have thus proved that $\int_{V'} \psi_A^{(n)} = m(A')$ for every $n$. 
Moreover, the functions $\psi_A^{(n)}$ are nonnegative and converge
pointwise to  $h_A$ on $J$, and to zero on $V' \setminus J$.
Hence we may apply  Fatou's lemma, which yields
\begin{equation}
\label{eqFatou}
m(A') = \lim_{n \rightarrow \infty} \int_{V'} \psi_A^{(n)} \geq \int_{V'}
 \lim_{n \rightarrow \infty} \psi_A^{(n)} = \int_{J} h_A,
\end{equation}
and we arrive at \eqref{ineqHA}. Together with
\eqref{ineqAqJ}, this implies \eqref{AqM}.
The proof of \eqref{BqM} can be done in a similar way.
\lemref{TreeRep} is thus proved.
\end{proof}

\subsection{Proof of \thmref{thmA2.4}}
We can now turn to prove \thmref{thmA2.4}. Recall that the theorem asserts that 
if the tiling levels $p,q$ are two positive, coprime integers then
we must have 
\begin{equation}
\label{eqLW5.2}
m(\Om) > \al + \be - \min \Big\{\frac{\al}{q}, \frac{\be}{p} \Big\}.
\end{equation}

Let us combine the lower bound (\lemref{lemJMlower})
and the upper bound (\lemref{TreeRep}) 
for the total jump measure. Since by \eqref{eqFX4.1} 
we have $m(A) = \al$, $m(B) = \be$, this yields
\begin{equation}
\label{eqH4.4}
\al + \be - m(\Om) \leq m(J) \leq 
\min \Big\{\frac{\al}{q}, \frac{\be}{p} \Big\},
\end{equation}
and as a consequence,
\begin{equation}
\label{eqH4.5}
 m(\Om) \geq
  \al + \be -  \min\left\{\frac{\al}{q}, \frac{\be}{p}\right\}.
\end{equation}
We thus almost arrive at \eqref{eqLW5.2}. It only remains to show that 
an equality cannot  occur. 

We will need the following lemma.

\begin{lem}
\label{Jnull}
Suppose that we are given 
a full measure subset $\mathcal J$ of the total jump set $J$.
Let $\mathcal V$ be the set of all vertices $v \in V$
whose connected component in the graph
$G(\Om)$  intersects $\mathcal J$. Then $\mathcal V$ is a full
measure subset of $V'$.
\end{lem}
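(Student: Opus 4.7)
The plan is to deduce that $m(V' \setminus \mathcal{V}) = 0$ by writing $V' \setminus \mathcal{V}$ as a countable union of sets, each obtained by applying finitely many projection or preimage operations to a null subset of $V$, and then invoking that every such operation sends null sets to null sets.

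First I would note, using \lemref{JRep}, that each vertex $v \in V'$ lies in a unique finite-tree connected component of $G(\Om)$ which contains exactly one vertex of $J$; call this vertex the representative of $v$. Then $v \in \mathcal{V}$ if and only if the representative of $v$ belongs to $\mathcal{J}$, so $V' \setminus \mathcal{V}$ is precisely the union of the finite-tree components whose representative lies in $J \setminus \mathcal{J}$. Since $\mathcal{J}$ has full measure in $J$, the sets $(J \setminus \mathcal{J}) \cap A$ and $(J \setminus \mathcal{J}) \cap B$ are both null.

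Next, in direct analogy with formula \eqref{eqAP3.1} from the proof of \lemref{mesAtag}, I would derive
\[
A \cap (V' \setminus \mathcal{V}) = \bigcup_{n=0}^{\infty} \Big[ T^n\big((J \setminus \mathcal{J}) \cap A\big) \,\cup\, T^n (\pal \circ \pbe^{-1})\big((J \setminus \mathcal{J}) \cap B\big) \Big],
\]
where $T = \pal \circ \pbe^{-1} \circ \pbe \circ \pal^{-1}$; the analogous formula with the roles of $\al$ and $\be$ interchanged describes $B \cap (V' \setminus \mathcal{V})$. The derivation repeats the argument of \lemref{mesAtag} verbatim, with $J$ replaced throughout by $J \setminus \mathcal{J}$.

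The key step, and the main obstacle, is to verify that each of the four basic operations $\pal$, $\pbe$, $\pal^{-1}(\cdot) \cap \Om$, $\pbe^{-1}(\cdot) \cap \Om$ preserves nullness. For the forward maps this follows from their being local isometries: after decomposing $\R$ into intervals of length $\al$ (respectively $\be$), each piece maps isometrically onto $\tal$ (respectively $\tbe$), so the image of any null set is a countable union of null sets. For the preimage operations, if $S \sbt \tal$ is null then $m(\pal^{-1}(S) \cap \Om) = \int_S \deg_A = 0$ by \lemref{mesLemRN}, since $\deg_A$ is a finite-valued measurable function, and similarly for $\pbe^{-1}(\cdot) \cap \Om$. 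Granted this, every iterate $T^n$ and every composition $T^n (\pal \circ \pbe^{-1})$ sends a null set to a null set, so the right-hand side of the display is a countable union of null sets and hence itself null. Combining with the analogous statement on the $B$-side yields $m(V' \setminus \mathcal{V}) = 0$, so $\mathcal{V}$ has full measure in $V'$ as claimed.
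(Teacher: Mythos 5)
Your proposal is correct and follows essentially the same approach as the paper: both rely on the union formula from \lemref{mesAtag} and on the fact that $\pal$, $\pbe$ carry null sets to null sets under both images and preimages. The only minor difference is that the paper writes out the formula for $\mathcal{V}_A$ and compares it with the formula \eqref{eqAP3.1} for $A'$, whereas you work directly with the complement $A' \setminus \mathcal{V}_A$ and show it is null; you also supply a justification for the null-preservation step (via local isometry for images and \lemref{mesLemRN} for preimages) that the paper asserts without proof.
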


\begin{proof}
By definition,  $\mathcal V$ is the set of those vertices $v \in V$
such that there is a finite path  connecting $v$ to some
vertex in $\mathcal J$.   Let
\begin{equation}
\mathcal{J}_A = \mathcal J \cap A, \quad 
\mathcal{J}_B = \mathcal J \cap B, \quad
\mathcal{V}_A = \mathcal V \cap A, \quad 
\mathcal{V}_B = \mathcal V \cap B.
\end{equation}
By the same argument as in 
the proof of \lemref{mesAtag}, we have
\begin{equation}
\label{eqVA2.1}
\mathcal {V}_A  = \bigcup_{n=0}^\infty \Big[
(\pal \circ \pbe^{-1} \circ \pbe \circ \pal^{-1})^n(\mathcal {J}_A )   \cup 
 (\pal \circ \pbe^{-1} \circ \pbe \circ \pal^{-1})^n(\pal \circ \pbe^{-1})(\mathcal {J}_B) \Big].
\end{equation}
But the right hand sides of \eqref{eqAP3.1} 
and \eqref{eqVA2.1} coincide up to a set of measure zero,
since both the image and the preimage of a null set under 
the maps $\pal$ and $\pbe$  is again a null set. Hence
$\mathcal {V}_A$ is a full measure subset of $A'$.
In a similar way, 
$\mathcal {V}_B$ is a full measure subset of $B'$.
Thus $\mathcal {V} = \mathcal {V}_A \cup
\mathcal {V}_B$ is a full measure subset of $V' = A' \cup B'$.
\end{proof}

Now suppose to the contrary 
that there is equality in \eqref{eqH4.5}.
Then the two inequalities in \eqref{eqH4.4} also become equalities.
In particular,  we obtain
 \begin{equation}
\label{eqH4.6}
m(J) =  \min\left\{\frac{\al}{q}, \frac{\be}{p}\right\}.
\end{equation}
This means that we must have
$m(A) = q \cdot m(J)$ or $m(B) = p \cdot m(J)$. 
We shall suppose that  $m(A) = q \cdot m(J)$   (the other case is similar).
In turn, this implies that
all the inequalities in \eqref{ineqAqJ} become equalities, that is,
\begin{equation}
\label{eqAqJ}
m(A) = m(A') = \int_J h_A = q \cdot m(J).
\end{equation}

Since we know that 
$h_A(v) \geq q$ for every $v \in J$, it follows from
$\int_J h_A = q \cdot m(J)$ that $h_A(v) = q$ for every
$v$ in some full measure subset $\mathcal J$ of $J$.
 In other words, the connected component of  
every $v \in \mathcal J$ has  exactly $q$  vertices in $A$.
In turn, using \lemref{DoubleCount}  it follows that
any such a connected component must have
exactly $p$  vertices in $B$. For each $v \in J$,
let $h_B(v)$ be the number of vertices of the
 connected component of $v$ which lie in $B$.
  Then we have 
 $h_B(v) = p$ for every $v \in \mathcal J$,
 and hence a.e.\ on $J$.

Consider functions  $\phi_B^{(n)}$, $\psi_B^{(n)}$ on the
  set  $V' = A' \cup B'$, defined analogously to the functions
 $\phi_A^{(n)}$, $\psi_A^{(n)}$ from the
 proof of \lemref{TreeRep}. 
  Then the sequence $\psi_B^{(n)}$ converges pointwise to
$h_B$ on $J$, and to zero on $V' \setminus J$. 
But since the connected component of  
every $v \in \mathcal J$ has  exactly $p+q$ vertices 
($q$ of them in $A$, and $p$ of them in $B$) then 
after at most $p+q$ steps of the iterative leaves removal process,
all the mass $\psi_B^{(n)}$ on such a connected component
is concentrated on the representative vertex of the
connected component in $\mathcal J$. Using \lemref{Jnull} we know
that if $\mathcal V$ is the set of all vertices 
whose connected component 
intersects $\mathcal J$, then $\mathcal V$ is a full
measure subset of $V'$.  We conclude that
for all $n \geq p+q$ we have  $\psi_B^{(n)} = p$
on $\mathcal J = \mathcal V \cap J$
 (and hence, a.e.\ on $J$),
and $\psi_B^{(n)} = 0$
on $\mathcal V \setminus J$
 (and hence, a.e.\ on $V' \setminus J$). 
In turn, this implies that 
 $\int_{V'} \psi_B^{(n)} = 
 p \cdot m(J)$
for all $n \geq p+q$.
But on the other hand, observe (as in the
 proof of \lemref{TreeRep})  that we have
 $\int_{V'} \psi_B^{(n)} = m(B')$ for every $n$. 
We conclude that 
\begin{equation}
\label{massBP2}
m(B') = p \cdot m(J).
\end{equation}

Next we claim that $m(B') = m(B)$. Indeed, if not, then
$B \setminus B'$ is a positive measure subset of $B$. 
It follows that the set $\pal(\pbe^{-1}(B \setminus B') \cap \Om)$,
consisting of those vertices in $A$ that have a neighbor 
belonging to $B \setminus B'$ in the graph $G(\Om)$,
is a positive measure subset of  $A \setminus A'$.
But this implies that $m(A') < m(A)$, contradicting
\eqref{eqAqJ}. Hence we must have $m(B') = m(B)$.

We conclude that 
\begin{equation}
\label{massBP3}
m(B) = m(B') = p \cdot m(J).
\end{equation}

Finally we combine \eqref{eqAqJ}
and \eqref{massBP3} to obtain
\begin{equation}
\label{J5.7}
\frac{\alpha}{q} =  m(J) = \frac{\be}{p},
\end{equation}
and it follows that $p \al = q \be$. But this 
contradicts the assumption that $\al, \be$ are rationally independent. 
This establishes that equality in \eqref{eqH4.5} cannot occur,
and completes  the proof of \thmref{thmA2.4}.
\qed


\section{Simultaneous tiling by integer translates}
\label{secE9}

In this section 
 we turn  to deal with the case where the numbers
$\al, \be$ are linearly dependent over the rationals.
By rescaling, it would be enough to consider the
case $(\alpha,\beta) = (n,m)$ where $n,m$ are
positive integers.

We will prove Theorems 
\ref{thmA4.1},
\ref{thmA4.3} and
\ref{thmA4.2}
by showing that if 
a measurable function $f$ on $\R$ 
satisfies the simultaneous tiling condition
 \eqref{eqA2.7.1}
then  the tiling level vector 
$(p,q)$
must be proportional to $(m,n)$,
and if the level vector $(p,q)$ is nonzero then  the least possible measure of 
the support of $f$ is $n+m-\gcd(n,m)$.

The approach is based on a reduction of the 
simultaneous tiling problem from the real line
$\R$ to  the 
set of integers $\Z$.
In particular we will prove that
$n+m-\gcd(n,m)$
is also the least possible size of the support of a function
$g$ on $\Z$
that tiles the integers simultaneously (with a nonzero level vector) by 
two arithmetic progressions $\nz$ and $\mz$.

\subsection{} 
We begin by introducing the notion of tiling by translates of a function
on the set of integers $\Z$.
Let $g$ be a function on $\Z$, and $\Lam$ be a subset of $\Z$.
We say that $g+\Lam$ is a tiling of $\Z$ at level $w$ if we have
\begin{equation}
\label{eqH1.6}
\sum_{\lam \in \Lambda}g(t-\lam)=w,\quad t\in \Z,
\end{equation}
and the series \eqref{eqH1.6}
 converges absolutely for every $t \in \Z$.

We are interested in simultaneous tiling of the
integers by 
two arithmetic prog\-ressions
$\nz$ and $\mz$.
We thus  consider  
a function $g$ on $\Z$ satisfying 
\begin{equation}
\label{eqH2.7.1}
\sum_{k \in \Z}g(t-kn)=p,
\quad
\sum_{k \in \Z}g(t-km)=q,
\quad
t\in \Z,
\end{equation}
where  $n,m$ are positive 
integers, $p,q$ are complex numbers,
and both series in \eqref{eqH2.7.1} converge absolutely for every $t \in \Z$.

\subsubsection{}
We begin with the following basic result.

\begin{prop}
  \label{propH2.1}
Let $g$ be a 
 function on $\Z$ satisfying
\eqref{eqH2.7.1}, where
$n,m$ are positive integers. Then 
$g \in \ell^1(\Z)$, and the vector
$(p,q)$ must be proportional to $(m,n)$.
\end{prop}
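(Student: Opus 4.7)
The plan is to exploit the absolute convergence hypothesis to show that $g$ is absolutely summable, and then to derive the proportionality by integrating (that is, summing) each tiling identity over a fundamental domain of the corresponding subgroup.

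First I would observe that the map $(t,k) \mapsto t - kn$ is a bijection from $\{0,1,\dots,n-1\} \times \Z$ onto $\Z$. The hypothesis in \eqref{eqH2.7.1} that the first series converges absolutely for every $t$ means in particular that $\sum_{k \in \Z} |g(t-kn)| < \infty$ for each $t \in \{0,1,\dots,n-1\}$. Summing these $n$ finite quantities and using the bijection above gives
\begin{equation*}
\sum_{s \in \Z} |g(s)| = \sum_{t=0}^{n-1} \sum_{k \in \Z} |g(t-kn)| < \infty,
\end{equation*}
so $g \in \ell^1(\Z)$. This already disposes of the first assertion.

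Next, with absolute summability in hand, Fubini (for counting measure) lets me interchange the order of summation freely. Summing the first equation of \eqref{eqH2.7.1} over $t = 0, 1, \dots, n-1$ and again using the bijection gives
\begin{equation*}
np = \sum_{t=0}^{n-1}\sum_{k \in \Z} g(t-kn) = \sum_{s \in \Z} g(s).
\end{equation*}
By the entirely analogous computation applied to the second equation in \eqref{eqH2.7.1}, summed over $t = 0, 1, \dots, m-1$,
\begin{equation*}
mq = \sum_{t=0}^{m-1}\sum_{k \in \Z} g(t-km) = \sum_{s \in \Z} g(s).
\end{equation*}
Equating the two expressions yields $np = mq$, which is precisely the statement that the vector $(p,q)$ is proportional to $(m,n)$.

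There is no genuine obstacle here: once the bijection between $\{0,\dots,n-1\} \times \Z$ and $\Z$ is noted, the absolute convergence hypothesis immediately upgrades to $\ell^1$-summability of $g$, and the rest is a one-line double counting. The proof is essentially the discrete analogue of the standard observation, recalled in \propref{propA2.1} for the real line, that integrating a tiling identity over a fundamental domain forces the proportionality; the discrete setting is in fact cleaner, because no a priori integrability assumption on $g$ is required — the absolute convergence built into the definition of tiling already delivers it.
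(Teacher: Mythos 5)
Your proof is correct and follows essentially the same route as the paper: partition $\Z$ into residue classes modulo $n$ (resp.\ $m$), use the hypothesized absolute convergence on each class to conclude $g\in\ell^1(\Z)$, and then compute $\sum_{s\in\Z}g(s)$ two ways to get $np=mq$. The only cosmetic difference is that you explicitly name the bijection $(t,k)\mapsto t-kn$ and invoke Fubini, while the paper states the same rearrangement directly.
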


\begin{proof}
First we observe that
\begin{equation}
\label{eqH2.7.8}
\sum_{t \in \Z} |g(t)| = 
\sum_{t=0}^{n-1} 
\sum_{k \in \Z} |g(t-kn)|.
\end{equation}
By assumption,  the inner sum on the right hand side
of \eqref{eqH2.7.8} converges for every $t$.
Hence the sum on the left hand side
converges as well, which shows that the function
$g$ must be in $\ell^1(\Z)$.
Next, we have
\begin{equation}
\label{eqH2.7.9}
\sum_{t \in \Z} g(t) = 
\sum_{t=0}^{n-1} 
\sum_{k \in \Z} g(t-kn) = np,
\end{equation}
where the last equality follows from 
condition \eqref{eqH2.7.1}.
In a similar way, we also have
\begin{equation}
\label{eqH2.7.10}
\sum_{t \in \Z} g(t) = 
\sum_{t=0}^{m-1} 
\sum_{k \in \Z} g(t-km) = mq,
\end{equation}
again using \eqref{eqH2.7.1}.
Hence $np=mq$, that is,  the vector
$(p,q)$ is proportional to $(m,n)$.
\end{proof}

\subsubsection{}
Let us recall (see  \defref{defDBA}) that 
an $n \times m$ matrix
$M = (c_{ij})$ is called a \emph{doubly stochastic array} 
if its entries $c_{ij}$ are
 nonnegative, and  the sum of the entries at each row is $m$
and at each   column  is $n$.
We have seen that
the minimal size of the
support of an $n \times m$
doubly stochastic array is
 $n + m - \gcd(n,m)$
(\thmref{thmC1.6}).
In the proof of  \lemref{lemFN1.2} 
we used one part of this result, namely, 
the part which
states that there exists an $n \times m$
doubly stochastic array whose support size 
is as small as  $n + m - \gcd(n,m)$.

In what follows 
we will use the other part of the result, that is,
the part which states 
 that $n + m - \gcd(n,m)$ constitutes a lower bound
for the support size of  any $n \times m$
doubly stochastic array. Actually, we will need a 
 stronger version of this result, proved in  \cite{EL22}, 
 which establishes that the same
lower bound holds also for complex-valued 
matrices, that is, even without assuming
that the matrix entries are nonnegative.

\begin{thm}[{see \cite[Theorem 3.1]{EL22}}]
\label{thmD1.1}
Let $M = (c_{ij})$ be an  $n \times m$ 
complex-valued matrix  satisfying \eqref{eqDS1.1}
and \eqref{eqDS1.2}, that is,
the sum of the entries at each 
row is $m$
and at each column  is
$n$. Then 
the support of $M$ 
has size at least $ n + m - \gcd(n,m)$.
\end{thm}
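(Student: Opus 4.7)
The plan is to encode the zero/nonzero pattern of $M$ as a bipartite graph and to bound its edge count via a connectivity argument. Let $G = (R \cup C, E)$ be the bipartite graph with $R = \{1,\dots,n\}$, $C = \{1,\dots,m\}$, and $(i,j) \in E$ iff $c_{ij} \neq 0$. Since $|E| = |\supp M|$, it suffices to show $|E| \ge n + m - \gcd(n,m)$.

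I would first observe that $G$ has no isolated vertices. Indeed, if row $i$ were isolated then $\sum_j c_{ij} = 0$, contradicting the row sum being $m \ge 1$; similarly for columns. Consequently, if $K$ denotes the number of connected components of $G$, the elementary fact that a connected graph on $v$ vertices has at least $v-1$ edges yields
\[
|E| \;\ge\; \sum_{k=1}^{K} (|R_k'| + |C_k'| - 1) \;=\; n + m - K,
\]
where $R_k' \cup C_k'$ are the connected components. The whole task then reduces to proving $K \le \gcd(n,m)$.

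The key lemma is that every connected component $R' \cup C'$ satisfies $|R'| \cdot m = |C'| \cdot n$. The reason is that connectivity forces $c_{ij} = 0$ whenever exactly one of $i \in R'$, $j \in C'$ holds; so summing the row identity $\sum_j c_{ij} = m$ over $i \in R'$ gives $\sum_{i \in R', j \in C'} c_{ij} = |R'| m$, while the column identity summed over $j \in C'$ yields $|C'| n$, and the two sums coincide. Writing $d = \gcd(n,m)$ with $n = da$ and $m = db$, $\gcd(a,b)=1$, the relation $|R'| b = |C'| a$ forces $a \mid |R'|$, so each component contains at least $a$ rows. Since the row counts of the components partition $\{1,\dots,n\}$ and sum to $n = da$, it follows that $K \le d$, completing the proof.

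I do not anticipate a serious obstacle here; the argument is the standard bipartite-graph proof for doubly stochastic arrays, and what makes it apply to the complex-valued setting is simply that the derivation of $|R'| m = |C'| n$ uses only the row and column sum identities and the vanishing of entries outside the component, never the sign of the entries. The one point to state carefully is that a connected component, viewed as a set of row indices together with a set of column indices, really does force all entries in the complementary "off-diagonal blocks" to vanish; this is immediate from the definition of a connected component in $G$ but deserves to be written explicitly before invoking the double counting.
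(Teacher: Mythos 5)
The paper does not prove this theorem; it is quoted from \cite{EL22} and used as a black box (in the proofs of Theorem~\ref{thmH3.1} and its consequences), so there is no internal proof to compare against. That said, your argument is a complete and correct proof of the stated result. The structure is clean: the bound $|E| \ge n+m-K$ is the standard spanning-forest count, the block decomposition observation (that $c_{ij}=0$ whenever exactly one of $i,j$ lies in a given component) is exactly what lets the row/column double count go through without any positivity assumption, and the divisibility step $a \mid |R'|$ together with the partition $\sum_k |R_k'| = n = da$ correctly forces $K \le d$. You do implicitly use that every component contains at least one row (otherwise $|R'| \ge a$ could fail with $|R'|=0$), but this is guaranteed since a component consisting only of columns would contain isolated vertices; it would be worth stating this half-sentence explicitly. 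You are right that the argument never touches the sign of the entries, which is precisely the point that makes the complex-valued extension work. One small remark for context: the matching \emph{upper} bound (Theorem~\ref{thmC1.6}, existence of a doubly stochastic array with support exactly $n+m-\gcd(n,m)$) is where the nonnegativity constraint actually matters, so the two directions are genuinely asymmetric in this respect.
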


\subsubsection{}
By the \emph{support}
of   a function $g$ on $\Z$ we mean the set
\begin{equation}
\label{eqH2.1}
\supp g = \{t \in \Z : g(t) \neq 0\}.
\end{equation}

In the next result 
we  use \thmref{thmD1.1} to
give a lower bound for
 the support size of any function $g$ on $\Z$ 
that tiles simultaneously  by 
the two arithmetic progressions
$\nz$ and $\mz$
with a nonzero tiling level vector  $(p,q)$.

\begin{thm}
  \label{thmH3.1}
Let $g$ be a 
 function on $\Z$ satisfying
\eqref{eqH2.7.1} where 
$n,m$ are positive  integers
and the vector $(p,q)$ is nonzero. Then 
$\supp g$ has size at least $ n + m - \gcd(n,m)$.
\end{thm}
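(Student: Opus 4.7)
The plan is to reduce \thmref{thmH3.1} to the matrix lower bound \thmref{thmD1.1}. First, by \propref{propH2.1}, the function $g$ is in $\ell^1(\Z)$ and the vector $(p,q)$ is proportional to $(m,n)$; since $(p,q)$ is nonzero and $m,n$ are positive, both $p$ and $q$ are nonzero. As the size of $\supp g$ is invariant under multiplication of $g$ by a nonzero scalar, we may rescale and assume $(p,q)=(m,n)$.

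Next I would assemble an $n \times m$ matrix from $g$ by grouping its values along joint residue classes modulo $n$ and modulo $m$. Concretely, for $0 \le i \le n-1$ and $0 \le j \le m-1$ define
\begin{equation}
\label{eqPlan.1}
c_{ij} := \sum_{\substack{t \in \Z \\ t \equiv i \,(\bmod \, n) \\ t \equiv j \,(\bmod\, m)}} g(t).
\end{equation}
The sum converges absolutely since $g \in \ell^1(\Z)$, and the subsets of $\Z$ indexed by the different pairs $(i,j)$ are pairwise disjoint (some of them may of course be empty, namely when $i \not\equiv j$ modulo $\gcd(n,m)$, in which case $c_{ij}=0$ automatically). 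Summing \eqref{eqPlan.1} over $j$ for a fixed $i$ collapses to the full sum $\sum_{t \equiv i \,(\bmod\, n)} g(t)$, which by the first tiling condition in \eqref{eqH2.7.1} equals $p = m$; similarly, summing over $i$ for fixed $j$ gives $q = n$. Thus $M = (c_{ij})$ is a complex-valued $n \times m$ matrix satisfying the row-sum condition \eqref{eqDS1.1} and the column-sum condition \eqref{eqDS1.2}.

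Applying \thmref{thmD1.1} to $M$, we obtain $|\supp M| \ge n+m-\gcd(n,m)$. It remains to transfer this bound to $\supp g$. But the subsets of $\Z$ appearing in \eqref{eqPlan.1} are pairwise disjoint, so whenever $c_{ij} \neq 0$ there must exist at least one $t \in \Z$, lying in the corresponding residue class, with $g(t) \neq 0$; choosing one such $t$ for every $(i,j) \in \supp M$ produces $|\supp M|$ distinct elements of $\supp g$. This yields $|\supp g| \ge |\supp M| \ge n+m-\gcd(n,m)$, as required.

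The argument is essentially routine given \thmref{thmD1.1}; the only step that requires slight care is confirming that the absolute convergence of the tiling series (guaranteed by the hypothesis) legitimizes the rearrangement used in computing the row and column sums of $M$. There is no real obstacle beyond this bookkeeping, and the reduction is the content of the proof.
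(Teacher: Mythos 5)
Your proof is correct, but it takes a genuinely different and somewhat cleaner route than the paper. The paper first reduces to the case $\gcd(n,m)=1$: it folds $g$ modulo $nm$ to a function $h$ on $\Z_{nm}$, uses the Chinese remainder isomorphism $\Z_{nm}\cong\Z_n\times\Z_m$ to turn $h$ into an $n\times m$ matrix, applies \thmref{thmD1.1}, and then handles general $n,m$ by a separate decomposition $g_j(t)=g(j+dt)$, $0\le j\le d-1$, with $d=\gcd(n,m)$, summing the resulting lower bounds. You bypass both the CRT step and the case split by defining $c_{ij}$ directly as a sum of $g$ over the joint residue class $\{t: t\equiv i\,(\bmod\, n),\ t\equiv j\,(\bmod\, m)\}$. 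This makes sense for arbitrary $n,m$ (the classes with $i\not\equiv j \pmod{d}$ are simply empty, so $c_{ij}=0$), the row/column sums follow exactly as you say from the two tiling conditions and the absolute summability of $g$, and \thmref{thmD1.1} — which the paper states for arbitrary $n,m$ but invokes only in the coprime case — gives the bound in one stroke. When $n,m$ are coprime your matrix coincides with the paper's $M$, so this is a genuine streamlining rather than a different core idea; the key ingredient (\thmref{thmD1.1}) is the same. What you lose relative to the paper is the explicit identification, in the coprime case, of the folded function $h$ as a simultaneous tiling of the finite group $\Z_{nm}$ by the subgroups $H_n$ and $H_m$, which is the correspondence with doubly stochastic arrays that the paper wishes to highlight; what you gain is uniformity and the elimination of the auxiliary decomposition \eqref{eqM1.25}--\eqref{eqM1.27}.
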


\begin{proof}
By \propref{propH2.1} the 
 function $g$ is in $\ell^1(\Z)$,
and the tiling level vector
$(p,q)$ is proportional to $(m,n)$.
By multiplying the 
 function $g$ on an
appropriate scalar we may
suppose that
$(p,q) = (m,n)$.

We will first prove the result in the special
case where $n,m$ are \emph{coprime}.
Let $\Z_{nm}$ be the
additive group of residue classes modulo $nm$.
Define a function
\begin{equation}
\label{eqH3.3}
h(t) := \sum_{k \in \Z} g(t - knm), \quad t \in \Z.
\end{equation}
Then $h$ is periodic with period $nm$, so it may be
viewed as a function on
 $\Z_{nm}$.

Let  $H_k$  denote the
subgroup of $\Z_{nm}$ generated
by the element $k$. One
can verify  using \eqref{eqH2.7.1}
and \eqref{eqH3.3} that the  function $h$
tiles the group $\Z_{nm}$ by translations
along each one of 
the two subgroups $H_n$ and $H_m$, that is to say,
\begin{equation}
\label{eqM1.15}
\sum_{s \in H_n} h(t-s) = m, \quad
\sum_{s \in H_m} h(t-s) = n, \quad
t \in \Z_{nm}.
\end{equation}

Next, we denote by $\Z_n$ and $\Z_m$ the
additive groups of residue classes modulo $n$
and $m$ respectively. 
Since $n,m$ are coprime, then by the
Chinese remainder theorem
 there 
is a group isomorphism 
$\varphi: \Z_{nm} \to \Z_n \times \Z_m$
given by
$\varphi(t) =  
(t \,\operatorname{mod} \, n, \,
t \,\operatorname{mod} \, m)$.
This isomorphism allows us to lift the
function $h$ to a new function 
\begin{equation}
\label{eqM3.2}
M: \Z_n \times \Z_m \to \R
\end{equation}
defined by 
$M(\varphi(t)) = h(t)$,
$t \in \Z_{nm}$. We use
\eqref{eqM3.2}
as an alternative way to
represent a complex-valued
 $n \times m$ matrix $M$,
in which  the rows of the matrix   are 
indexed by residue classes modulo $n$,
while  the columns 
indexed by 
residue classes modulo $m$.

We now claim that the sum of the entries 
of the matrix $M$ at each row is equal to
$m$ and at each column is equal to $n$.
To see this, we observe that 
 the isomorphism 
$\varphi$ maps the subgroup
$H_n$ of $\Z_{nm}$ onto 
the subgroup
$\{0\} \times \Z_m$
of $\Z_n \times \Z_m$.
Hence for each $i \in \Z_n$, the set
$\{(i,j) : j \in \Z_m\}$ is the image under
$\varphi$ of a certain coset of $H_n$ in $\Z_{nm}$,
say, the coset $a_i - H_n$. It follows that 
\begin{equation}
\label{eqM1.12}
\sum_{j \in \Z_m} M(i,j) = 
\sum_{s \in H_n } h(a_i-s) = m,
\end{equation}
where in the last equality we used \eqref{eqM1.15}.
In a similar way, $\varphi$ maps the subgroup
$H_m$ onto  $\Z_n \times \{0\}$,
so for each $j \in \Z_m$ the set
$\{(i,j) : i \in \Z_n\}$ is the image under
$\varphi$ of a coset $b_j - H_m$, and we obtain
\begin{equation}
\label{eqM1.13}
\sum_{i \in \Z_n} M(i,j) = 
\sum_{s \in H_m } h(b_j-s) = n,
\end{equation}
again using \eqref{eqM1.15}.
We thus see that
the sum of the entries of $M$ at each row is
$m$ and at each column is $n$.

Notice that we cannot  say that $M$
is a doubly stochastic array, since 
the entries of $M$ are 
not guaranteed  to be  nonnegative
 (see  \defref{defDBA}).
Nevertheless, we can now invoke 
\thmref{thmD1.1} which is valid also
for complex-valued 
matrices. Since $n,m$ are coprime, it
 follows from the theorem that
the support of $M$ has size at least
$n + m-1$. Since $\supp h$ and
$\supp M$ are of the same size,
we conclude that
\begin{equation}
\label{eqM1.20}
|\supp h| \geq n+m-1.
\end{equation}

Lastly we observe that  if $h(t) \neq 0$ for some $t \in \Z$,
then 
$g$ does not vanish on at least one element
of the arithmetic progression $\{t - knm :  k \in \Z\}$
due to  
\eqref{eqH3.3}.
But these  arithmetic progressions are pairwise disjoint
as $t$ goes through 
 a complete set of residues modulo $nm$.
This shows that $\supp g$ has size
at least as large as the size of $\supp h$.
So combined with \eqref{eqM1.20}
this implies that
$\supp g$ is of size at least $n+m-1$.

We have thus proved the result in the special
case where $n,m$ are coprime. To prove the result
in the general case, we now let
$n,m$ be two arbitrary positive integers.
We then write $n = dn'$, $m=dm'$, where $d = \gcd(n,m)$
and $n',m'$ are coprime.
For each $0 \leq j \leq d-1$ we consider the function
\begin{equation}
\label{eqM1.25}
g_j(t) := g(j + dt), \quad t \in \Z.
\end{equation}
It follows from \eqref{eqH2.7.1} 
that each $g_j$ tiles the integers simultaneously by
the two arithmetic progression $n' \Z$ and $m' \Z$
at levels $p$ and $q$ respectively. Since $n',m'$
are coprime (and the tiling level vector is nonzero)
then, by what we have proved above, the size of
$\supp g_j$ must be at least $n'+m'-1$. It follows that
\begin{equation}
\label{eqM1.27}
| \supp g | = \sum_{j=0}^{d-1} |\supp g_j| \geq
d(n'+m'-1) = n+m-d,
\end{equation}
and we arrive at the desired conclusion.
\end{proof}

 We note that the correspondence between  the
$n \times m$ doubly 
stochastic arrays and the nonnegative
functions which 
tile the group $\Z_{nm}$ 
by translations
along each one of 
the two subgroups $H_n$ and $H_m$
(where $n,m$ are coprime)
was pointed out in an earlier version of
\cite{KP22}.

\subsubsection{}
Our next result shows that
the lower bound in \thmref{thmH3.1} is in fact sharp.

\begin{thm}
  \label{thmH3.2}
For any two positive  integers $n,m$ 
there exists a nonnegative function $g$ on $\Z$,
supported on a set of $n+m-\gcd(n,m)$ consecutive
integers,  and satisfying
\eqref{eqH2.7.1} with $(p,q)=(m,n)$.
\end{thm}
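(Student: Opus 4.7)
The plan is to construct $g$ explicitly, reducing the general case to the coprime one by a ``thickening'' of a basic convolution. Let $d := \gcd(n,m)$ and write $n = dn'$, $m = dm'$ with $\gcd(n',m') = 1$. I first handle the coprime subcase by taking the discrete convolution
\begin{equation*}
h := \1_{\{0,1,\dots,n'-1\}} \ast \1_{\{0,1,\dots,m'-1\}},
\end{equation*}
which is a nonnegative function on $\Z$ supported on the $n'+m'-1$ consecutive integers $\{0,1,\dots,n'+m'-2\}$. A short counting argument shows that $h$ simultaneously tiles $\Z$ by $n'\Z$ at level $m'$ and by $m'\Z$ at level $n'$: for each $b \in \{0,\dots,m'-1\}$ there is exactly one $a \in \{0,\dots,n'-1\}$ with $a \equiv t-b \pmod{n'}$, yielding a unique triple $(k,a,b)$ with $a+b+kn'=t$ contributing to $\sum_k h(t-kn')$, for a total of $m'$; the case of $m'\Z$ is symmetric.

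To obtain $g$ for general $n,m$, I will define $g : \Z \to \R$ by
\begin{equation*}
g(j + dt) := d \cdot h(t), \qquad 0 \le j \le d-1, \; t \in \Z,
\end{equation*}
which is well defined since every integer has a unique such representation. Then $g$ is nonnegative, and its support is the union over $j = 0, \dots, d-1$ of the sets $\{j + dt : 0 \le t \le n'+m'-2\}$, which together fill up the interval $\{0,1,\dots,d(n'+m'-1)-1\} = \{0,1,\dots,n+m-d-1\}$, that is, a block of exactly $n+m-d$ consecutive integers as required.

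Finally, to verify \eqref{eqH2.7.1} with $(p,q) = (m,n)$: given $t \in \Z$, write $t = j + ds$ uniquely with $0 \le j \le d-1$, $s \in \Z$. Since $n = dn'$, we have $g(t-kn) = g(j + d(s-kn')) = d \cdot h(s-kn')$, and hence $\sum_k g(t-kn) = d \sum_k h(s-kn') = d \cdot m' = m$; the identity $\sum_k g(t-km) = d \cdot n' = n$ follows in the same way from $m = dm'$. Since the whole construction is entirely explicit, there is no real obstacle; the only substantive step is the tiling property of $h$ in the coprime case, which is the classical fact that the discrete convolution of the indicators of two complete sets of residues modulo coprime moduli is a perfect tiling.
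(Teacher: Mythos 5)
Your proof is correct and follows essentially the same route as the paper: handle the coprime case via the discrete convolution of the two indicator functions, then reduce the general case by the same $d$-fold ``thickening'' $g(j+dt)=\text{(const)}\cdot h(t)$. The only cosmetic difference is where the factor $d$ is inserted (you scale in the thickening, the paper scales $h$ itself); the two are equivalent.
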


\begin{proof}
Let $\chi_k$  denote the indicator
function of the subset
$\{0,1,\dots,k-1\}$ of $\Z$.
We consider 
 a function $g$ on $\Z$ defined
as the convolution
\begin{equation}
\label{eqM3.4}
g(t) = (\chi_n \ast \chi_m)(t) = \sum_{s \in \Z} \chi_n(t-s)\chi_m(s),
\quad
t \in \Z.
\end{equation}
Then $g$ is supported on the set
$\{0,1,\dots, n+m-2\}$ of size $n+m-1$.
Since the function $\chi_n$ tiles  at level one
by translation with $\nz$, 
and $\chi_m$ tiles also at level one
by translation with $\mz$, we can deduce from
\eqref{eqM3.4} that
$g$   satisfies the simultaneous tiling condition
\eqref{eqH2.7.1} with $(p,q)=(m,n)$.
This proves the result in the 
case where $n,m$ are coprime.

 To prove the result
in the general case, we  
write as before $n = dn'$, $m=dm'$, where $d = \gcd(n,m)$
and $n',m'$ are coprime.
Let $h$ be a nonnegative function  on $\Z$,
supported on a set of $n'+m'-1$ consecutive
integers, which  tiles simultaneously by
the two arithmetic progression $n' \Z$ and $m' \Z$
at levels $m$ and $n$ respectively
(such a function $h$ exists, by what we have proved above).
We then define a  function  $g$ on $\Z$ by
\begin{equation}
\label{eqM1.34}
g(j+dt) := h(t), \quad 0 \leq j \leq d-1, \; t \in \Z.
\end{equation}
Then $g$ satisfies the simultaneous tiling
condition \eqref{eqH2.7.1}, and $g$ is supported on
a set of  $d(n'+m'-1) = n+m-d$
consecutive
integers,  as required.
\end{proof}

\subsection{}

Given  a measurable 
 function $f$ on $\R$, we define for each
$x \in \R$  a function $f_x$ on the set of integers $\Z$, given by
\begin{equation}
\label{eqH1.4}
f_x(t) := f(x+t), \quad t \in \Z.
\end{equation}

The following lemma gives a connection between tilings
of $\R$ and tilings of $\Z$.

\begin{lem}
  \label{lemH1.1}
Let $f$ be a measurable function on $\R$, 
and $\Lam$ be a subset of $\Z$. Then
$f+\Lam$ is a tiling of $\R$
at level $w$ if and only if
$f_x + \Lam$ is a tiling of $\Z$
at the same level $w$
for almost every $x \in \R$.
\end{lem}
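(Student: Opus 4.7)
The plan is to prove both directions by carefully tracking the null set of $\R$ on which the $\R$-tiling identity for $f$ fails, and to invoke the elementary fact that a countable union of null sets is null together with the translation-invariance of the Lebesgue null ideal.

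For the forward direction, I would begin by supposing that $f + \Lam$ is a tiling of $\R$ at level $w$ and letting $N \sbt \R$ denote the null set consisting of those $y \in \R$ for which either the series $\sum_{\lam \in \Lam} f(y-\lam)$ fails to converge absolutely, or converges but not to $w$. Unpacking the definitions, the statement that $f_x + \Lam$ is a tiling of $\Z$ at level $w$ is precisely the assertion that $\sum_{\lam \in \Lam} f(x+t-\lam) = w$ with absolute convergence for every $t \in \Z$, which in turn amounts to the condition $x + t \notin N$ for every $t \in \Z$, i.e.\ $x \notin N' := \bigcup_{t \in \Z}(N - t)$. Each translate $N - t$ is null, and the union is countable, so $N'$ is null. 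Hence $f_x + \Lam$ is a tiling of $\Z$ at level $w$ for every $x$ in the full-measure set $\R \setminus N'$.

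For the converse, I would suppose that there is a full-measure set $X \sbt \R$ such that $f_x + \Lam$ is a tiling of $\Z$ at level $w$ for every $x \in X$. Specialising the $\Z$-tiling identity to $t = 0$ shows that for each such $x$ the series $\sum_{\lam \in \Lam} f(x-\lam)$ converges absolutely to $w$. Since this holds on the full-measure set $X$, the required a.e.\ identity \eqref{eqI1.1} for $f + \Lam$ is established.

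There is essentially no obstacle in this argument: the entire content of the lemma is the observation that the exceptional null set for the $\R$-tiling lifts to a null set of ``bad'' base-points $x$ under the $\Z$-action by translation, together with the trivial specialisation $t=0$ used in the converse direction.
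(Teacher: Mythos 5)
Your proof is correct and follows essentially the same route as the paper's: in both directions the key observation is that $f_x(t-\lam) = f(x+t-\lam)$, so the $\Z$-tiling condition for $f_x$ at $t$ is exactly the $\R$-tiling condition for $f$ at $x+t$, and the countable family of integer translates preserves the null/full-measure dichotomy. Your phrasing in terms of the null set $N$ and its union of translates is the complementary formulation of the paper's intersection $\bigcap_{t\in\Z}(E-t)$ of a full-measure set, and the converse via specialising to $t=0$ is identical.
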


\begin{proof}
Let
$f+\Lam$ be a tiling of $\R$
at level $w$, then we have
\begin{equation}
\label{eqH1.45}
\sum_{\lambda\in\Lambda}f(x-\lambda)=w
\end{equation}
for all $x$ in some set $E \sbt \R$ of full measure.
It follows that for $t \in \Z$ we have
\begin{equation}
\label{eqH1.46}
\sum_{\lambda\in\Lambda}f_x(t-\lambda)=
\sum_{\lambda\in\Lambda}f(x+t-\lambda)=w
\end{equation}
provided that $x \in E-t$.
(The series in both \eqref{eqH1.45} and 
\eqref{eqH1.46} are understood to converge absolutely.)
Hence $f_x + \Lam$ is a tiling of $\Z$
at level $w$
for every $x$ belonging to  the set 
$ \bigcap_{t \in \Z} (E - t)$, which is also
a set of full measure in $\R$.

Conversely, let
$f_x + \Lam$ be a tiling of $\Z$
at level $w$
for almost every $x \in \R$. Then 
\begin{equation}
\label{eqH1.48}
\sum_{\lambda\in\Lambda}f(x-\lambda)=
\sum_{\lambda\in\Lambda}f_x(-\lambda)=w\quad\text{a.e.}
\end{equation}
(with absolute convergence of the series) and so
$f+\Lam$ is a tiling of $\R$ at level $w$.
\end{proof}

\begin{lem}
  \label{lemH4.1}
Let $f$ be a measurable function on $\R$.
Then
\begin{equation}
\label{eqH4.1}
\mes (\supp f) = \int_0^1 |\supp f_x|\, dx.
\end{equation}
\end{lem}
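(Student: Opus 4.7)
The plan is to interpret both sides of \eqref{eqH4.1} as integrals of a single nonnegative function on $\R$ and invoke Tonelli's theorem to exchange the summation over $\Z$ with integration over $[0,1)$.

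First I would write the left-hand side as
\begin{equation*}
\mes(\supp f) = \int_\R \1_{\supp f}(y)\, dy,
\end{equation*}
and then decompose $\R$ as the disjoint union of the unit intervals $[t, t+1)$ for $t \in \Z$. Changing variables via $y = x + t$ in each piece, I obtain
\begin{equation*}
\mes(\supp f) = \sum_{t \in \Z} \int_0^1 \1_{\supp f}(x+t)\, dx.
\end{equation*}

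Next I would observe that $\1_{\supp f}(x+t) = \1_{\{t \,:\, f_x(t) \neq 0\}}(t)$, because by definition $f_x(t) = f(x+t)$, so $t \in \supp f_x$ if and only if $x+t \in \supp f$. Since the integrand is nonnegative, Tonelli's theorem allows me to exchange the sum and the integral, yielding
\begin{equation*}
\mes(\supp f) = \int_0^1 \sum_{t \in \Z} \1_{\supp f_x}(t)\, dx = \int_0^1 |\supp f_x|\, dx,
\end{equation*}
which is the desired equality.

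There is no real obstacle here: the only subtlety is that $|\supp f_x|$ may equal $+\infty$ for some $x$, but this is accommodated by Tonelli's theorem applied to the nonnegative measurable function $(x,t) \mapsto \1_{\supp f}(x+t)$ on the product space $[0,1) \times \Z$ equipped with Lebesgue measure and counting measure. Measurability in $(x,t)$ follows from the measurability of $f$ together with the fact that the map $(x,t) \mapsto x+t$ is continuous, hence Borel measurable.
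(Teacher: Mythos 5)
Your proof is correct. The paper in fact omits the proof of this lemma, remarking only that it is ``standard,'' and the argument you give is precisely the standard one: decompose $\R$ into the unit intervals $[t,t+1)$, $t\in\Z$, rewrite $\mes(\supp f)$ as a sum of integrals over $[0,1)$, identify $\1_{\supp f}(x+t)$ with $\1_{\supp f_x}(t)$, and apply Tonelli on $[0,1)\times\Z$ (Lebesgue measure times counting measure) to swap the sum and the integral. The remark about possibly infinite values of $|\supp f_x|$ and the measurability of $(x,t)\mapsto\1_{\supp f}(x+t)$ are exactly the right points to note; nothing is missing.
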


The proof of this lemma is standard and so we omit the details.

\subsection{}
Now we can prove
\thmref{thmA4.1},  \thmref{thmA4.3} and \thmref{thmA4.2}.

\begin{proof}[Proof of \thmref{thmA4.1}]
Let $n,m$ be positive  integers, and
 $f$ be a measurable function on $\R$ 
satisfying \eqref{eqA2.7.1}. 
By \lemref{lemH1.1}, the function 
$g = f_x$ then satisfies the simultaneous tiling condition
\eqref{eqH2.7.1} for almost every
$x \in \R$. Using \propref{propH2.1}
we conclude that the vector
$(p,q)$ must be proportional to $(m,n)$.
\end{proof}

\begin{proof}[Proof of \thmref{thmA4.3}]
Let $f$ be a measurable function on $\R$ satisfying
\eqref{eqA2.7.1} where 
$n,m$ are positive integers
and the vector $(p,q)$ is nonzero.
By \lemref{lemH1.1}, the function 
$g = f_x$ then satisfies the 
simultaneous tiling condition 
\eqref{eqH2.7.1} for almost every
$x \in \R$. 
By applying \thmref{thmH3.1}
to the function $g = f_x$ we obtain
that $|\supp f_x| \geq n+m-\gcd(n,m)$
for  almost every
$x \in \R$. Finally, combining this with
\lemref{lemH4.1}
we conclude that
\begin{equation}
\label{eqH4.63}
\mes (\supp f) = \int_0^1 |\supp f_x|\, dx
\geq n+m-\gcd(n,m),
\end{equation}
and so the theorem is proved.
\end{proof}

\begin{proof}[Proof of \thmref{thmA4.2}]
Let $n,m$ be positive  integers, and 
$(p,q)=(m,n)$. Let $g$ be the function given
by \thmref{thmH3.2}, that is, $g$ is a
 nonnegative function on $\Z$,
supported on a set of $n+m-\gcd(n,m)$ consecutive
integers,  and satisfying
\eqref{eqH2.7.1}. We then construct a measurable
(in fact, piecewise constant) nonnegative
 function $f$ on $\R$ given by
$f(x+t) = g(t)$ for every $t \in \Z$ and $x \in [0,1)$.
Then  $f$ is supported on an
interval of length $n+m-\gcd(n,m)$, and $f$
   satisfies 
the tiling condition
\eqref{eqA2.7.1}  by \lemref{lemH1.1}.
\end{proof}


\section*{Acknowledgement}
We thank  Mihalis Kolountzakis for posing to us
the problem discussed in \secref{secY2}.


\end{document}